\documentclass[11pt,twoside]{amsart}
\usepackage{amssymb,amsmath,amsthm}
\usepackage{graphicx}
\usepackage[colorlinks,linkcolor=blue,citecolor=blue]{hyperref}
\usepackage{tikz-cd}

\numberwithin{equation}{section}
\newtheorem{thm}{Theorem}[section]
\newtheorem{lem}[thm]{Lemma}

\newtheorem{pro}[thm]{Proposition}
\newtheorem{rmk}[thm]{Remark}

\newcommand{\bess}{\begin{eqnarray*}}
	\newcommand{\eess}{\end{eqnarray*}}
\newcommand{\C}{\mathbb{\widehat{C}}}
\newcommand{\E}{\widetilde{\mathcal{E}}}
\newcommand{\g}{\boldsymbol g}
\newcommand{\bs}{\boldsymbol b}

\renewcommand{\setminus}{-}
\renewcommand{\epsilon}{\varepsilon}

\usepackage{fancyhdr} %Ò³ÃŒºÍÒ³œÅ
\input xy
\xyoption{all}

\title{Continuity of Julia sets and invariant rays}

\author{Xiaoguang Wang}
\address{School of Mathematical Sciences, Zhejiang University, Hangzhou, 310027, China}
\email{wxg688@163.com}
\date{}

\begin{document}

	\begin{abstract}   For  certain  typical   perturbations $(f_n)_n$ of a rational map $f$ with parabolic cycles,   we investigate the relations between
		\begin{itemize}
			\item 	the Hausdorff convergence of Julia sets and  invariant rays,  and
			\item    the horocyclic convergence of multipliers of periodic points.
	       			\end{itemize}
	  
	  We establish several equivalent characterizations by means of  parabolic implosion theory.    This builds upon an analysis of the edge dynamics  on the  tree for the gate structure induced by the  perturbation. The edge  dynamics  which are driven by Oudkerk's algorithm, are used to trace the orbits for the near parabolic perturbations.
	\end{abstract}

	\subjclass[2020]{Primary 37F10; Secondary 37F44, 37F46}
	
	%{Primary 37F45; Secondary 37F10, 37F15}
	
	% AMS keywords (used in AMS journals)
	\keywords{parabolic implosion, horocyclic convergence, Julia set}
	
	\date{\today} 
	
	\maketitle
	
	%\tableofcontents

	\section{Introduction}\label{int}
	
	Let ${\rm Rat}_d$ be the space of rational maps of degree $d\geq 2$, equipped with the algebraic topology. That is, we say  $f_n\rightarrow f$ {\it algebraically} if ${\rm deg}(f_n)={\rm deg}(f)$ and the coefficients of  $f_n$ (as a ratio of polynomials) can be chosen to converge to those of $f$.  The Julia set $J(f)$ is the closure of all repelling periodic points of $f\in {\rm Rat}_d$. 
	 The Julia set $J(f)$ determines a map
	 $$J: {\rm Rat}_d\rightarrow  \mathcal C(\C)$$
	from  ${\rm Rat}_d$ to the space $\mathcal C(\C)$ of non-empty compact subsets of the sphere $\C$, endowed with the Hausdorff topology. 
	
%	Base on the work of Douady \cite{D}, Yin \cite{Y} and Wu \cite{W}, it is known that 
	It  is  known from  Douady, Yin and Wu  \cite{D,Y, W} that
	 $J(f)$ varies continuously at   $f\in {\rm Rat}_d$ if and only if  $f$ has neither parabolic cycles nor rotation domains (Siegel disks and Herman rings).   
 When $f$ has parabolic cycles,  parabolic implosions provide a source of discontinuity  \cite{D}.  However,  for certain  sequences $(f_n)_n$ so that  $f_n\rightarrow f$  {algebraically},  it is still possible to have  $J(f_n)\rightarrow J(f)$.   
 This paper aims to determine suitable conditions for such sequences when the limit map $f$ has parabolic  points, and to study their continuity properties, including the continuity of invariant rays.
% This paper  aims to find the 
 %suitable conditions for these sequences, and to study the continuity properties (i.e. continuity of  invariant rays) for these sequences. 
%  study the continuity of Julia set and invariant rays at maps with parabolic cycles for some typical perturbations.  
 
 %The underlying idea is the parabolic implosion theory for parabolic germs with general multiplicity  developed by Oudkerk \cite{Ou99}.  

% Before introducing the main theorems, we  look at one of the applications, 
 %as an appetizer to the paper: 
 
% As an appetizer to the main results, we first introduce a motivating example as one of their applications.
 
 To introduce the main results and to signal one of their key applications, we begin with a motivating example.
 % before stating the principal theorems. 
 
 %Before stating the main theorems, we first present an application as an appetizer to the paper.
 
 %which is an apiti  the example for quadratic polynomial:  Let $f_n(z)=\lambda z+z^2$ with $\lambda_n\rightarrow 1$, and $f_0(z)=z+z^2$. If 
 
 	\begin{thm} \label{appe} Let $f_\lambda(z)=\lambda z+z^2$ with $\lambda\in \mathbb C$. Assume $\lambda_n\rightarrow  \lambda_0=e^{2\pi i p/q}$, which is a primitive $q${th} root of unity.  Then we have 
 		$$J(f_{\lambda_n})\rightarrow J(f_{\lambda_0}) \Longleftrightarrow  |{\rm Re}\big(1/(1-\lambda_n^q)\big)|\rightarrow +\infty.$$
 \end{thm}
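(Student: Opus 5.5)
Our strategy is to reduce Theorem \ref{appe} to parabolic implosion for the $q$-th iterate at the fixed point $0$, which is the standard way of seeing how the horocyclic condition arises. Write $g_n=f_{\lambda_n}^q$ and $g_0=f_{\lambda_0}^q$. At $0$ the map $g_0$ has multiplier $1$ and is of the form $g_0(z)=z+cz^{q+1}+O(z^{q+2})$ with $c\neq 0$. Thus $0$ has exactly $q$ attracting petals, and these are cyclically permuted by $f_{\lambda_0}$.

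For $\lambda_n$ near $\lambda_0$ with $\lambda_n^q\neq 1$, the fixed point $0$ of $g_n$ has multiplier $\lambda_n^q$, and there is one nearby $q$-cycle of $f_{\lambda_n}$. Together these $q+1$ fixed points of $g_n$ split off from the multiplicity-$(q+1)$ fixed point of $g_0$. The natural quantity is the holomorphic fixed point index $\iota_n=1/(1-\lambda_n^q)$ of $0$ for $g_n$. Because of the $\mathbb Z/q$ symmetry, after the coordinate change $w=z^q$ the unfolding becomes a family with a single pair of fixed points. Its Lavaurs phase, that is, the translation of the Fatou coordinates along the gate between $0$ and the $q$-cycle, is $-2\pi i\,\iota_n$ up to a bounded error, up to an additive constant and a multiple of $\mathbb Z$ (Oudkerk's algorithm, or Shishikura's near-parabolic normal form).

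The first step is the direction ($\Leftarrow$). We would show that if $|\mathrm{Re}\,\iota_n|\to\infty$, then $0$ and the $q$-cycle are both strictly attracting or repelling with multipliers tending horocyclically to $1$. The point is that $\mathrm{Re}\,\iota_n\to\pm\infty$ says exactly that $\lambda_n^q$ approaches $1$ tangentially to a horocycle, staying outside every horodisk. In Fatou coordinates the transit through the gate then takes a number of iterates that diverges, while the imaginary part of the phase, which is what governs escape through the gate, tends to infinity in absolute value. From this we would conclude, by Douady's argument, that the Lavaurs-type limits are trivial. Concretely, any compact $K$ in the parabolic basin of $g_0$ is eventually mapped by $g_n$ into the immediate basin of whichever of $0$ or the $q$-cycle is attracting, or else is absorbed. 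So $K\cap J(f_{\lambda_n})=\emptyset$ for $n$ large, and this gives upper semicontinuity, $\limsup J(f_{\lambda_n})\subset J(f_{\lambda_0})$. Lower semicontinuity always holds because repelling periodic points persist. We expect this step to be the main obstacle, namely controlling uniformly the orbits of points passing through the gate of the degenerate, multiplicity-$q$ germ. We would handle it by working in the $w=z^q$ coordinate, where the problem becomes the classical $q=1$ case, and then applying the estimate that a point entering the gate either exits within a bounded phase window or is attracted.

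The second step is the direction ($\Rightarrow$), which we prove by contrapositive. Suppose that along a subsequence $\mathrm{Re}\,\iota_n$ stays bounded. If moreover $|\iota_n|\to\infty$, then the phases $-2\pi i\iota_n$ converge modulo $\mathbb Z$ to some $\sigma$, after passing to a subsequence and choosing integers $k_n$ with $g_n^{k_n}\to$ the Lavaurs map $L_\sigma$. If instead $|\iota_n|$ stays bounded, the $q$-cycle has multiplier bounded away from the horocyclic regime, and the same implosion still applies. By Douady's theorem the limit of $J(f_{\lambda_n})$ contains $J(g_0,L_\sigma)$. This set is strictly larger than $J(f_{\lambda_0})$, because $L_\sigma$ maps some point of the basin to a point whose orbit escapes, and preimages of $J$ under $L_\sigma$ accumulate inside the parabolic basin. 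Hence $J(f_{\lambda_n})\not\to J(f_{\lambda_0})$, which completes the proof of the equivalence.
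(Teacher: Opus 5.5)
\textbf{Gap in ($\Leftarrow$).} The problem is the step you yourself flag as the main obstacle. The reduction ``after the coordinate change $w=z^q$ the unfolding becomes a family with a single pair of fixed points'' is not available. $g_n=f_{\lambda_n}^q$ does not commute with $z\mapsto e^{2\pi i/q}z$: its $q$-cycle is not an orbit of that rotation, and $f_{\lambda_n}$ is only tangent to a rotation at $0$. You give no change of coordinates that would create an exact $\mathbb Z/q$-symmetry. So $z\mapsto z^q$ does not semiconjugate $g_n$ to a single-valued germ, and the classical $q=1$ (Lavaurs) theory cannot be quoted.

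What remains is an unfolding with $q$ gates. There, ``the imaginary part of the phase tends to infinity, so the orbit is absorbed'' is not a valid inference. After crossing one gate, the orbit spirals around some fixed point into the next gate. Whether it ever escapes is decided by the partial sums $\mathrm{Im}\sum_{j\le r}\tau_{a_j}(g_n)$ along Oudkerk's algorithm, not by individual phases. This is exactly where horocyclic convergence alone fails. Once the symmetry reduction is removed, your Step~1 uses only that all multipliers tend to $1$ horocyclically, and it would equally ``prove'' Julia continuity for Oudkerk's example $z+z^4+1/n$, where it is false.

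\textbf{What is missing.} You need the $*$-structure and an argument that actually uses it.
\begin{enumerate}
\item Lemma \ref{asm-iota} gives $q/(1-\mu_n)+1/(1-\lambda_n^q)\to\iota(g_0,0)$, where $\mu_n$ is the common multiplier of the cycle points. So $|\mathrm{Re}\,\iota_n|\to\infty$ forces the cycle points to have real index tending to infinity with the opposite sign. You asserted this for the cycle without proof. It means $(g_n)_n$ is a $*$-sequence converging horocyclically.
\item Orbits must then be tracked on the gate tree. With $v_0=0$ and $V_*=V\setminus\{v_0\}$, Proposition \ref{find-e} (or Remark \ref{repelling-plus}) gives $E_*=E$, so the algorithm runs forever from every gate.
\item Proposition \ref{ak-limit2} then shows that compacta in each $A_k$ are absorbed by the attracting point. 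This is Theorem \ref{h-equiv}, which is how the paper concludes.
\end{enumerate}

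\textbf{The ($\Rightarrow$) direction.} Your contrapositive is a legitimate and more direct route than the paper's appeal to Theorem \ref{hdc-horo}. However, the Lavaurs limit must again come from the $q$-gate setting, not from the $q=1$ case.
\begin{enumerate}
\item Bounded $\mathrm{Re}\,\iota_n$, together with the index relation above, bounds the real parts of all $q+1$ indices.
\item By Proposition \ref{phase}, every lifted phase then has bounded imaginary part, so the algorithm stops at $r=1$.
\item Proposition \ref{ak-limit1} then gives $g_n^{N_n}\to g_\sigma$ and $X\cap A_k\neq\emptyset$ for every Hausdorff limit $X$.
\end{enumerate}

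\textbf{Minor points.}
\begin{enumerate}
\item The case ``$|\iota_n|$ bounded'' is vacuous, since $\lambda_n^q\to 1$.
\item Your description of the horocyclic condition is reversed. $|\mathrm{Re}\,\iota_n|\to\infty$ means $\lambda_n^q$ eventually lies in every horodisk at $1$, inside or outside the unit circle. It is the bounded case that corresponds to tangential approach.
\end{enumerate}
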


%  \Big|{\rm Re}\Big(\frac{1}{1-\lambda_n^q}\Big)\Big|\rightarrow +\infty.

	\begin{figure}[h]
	\begin{center}
		\includegraphics[height=3.725cm]{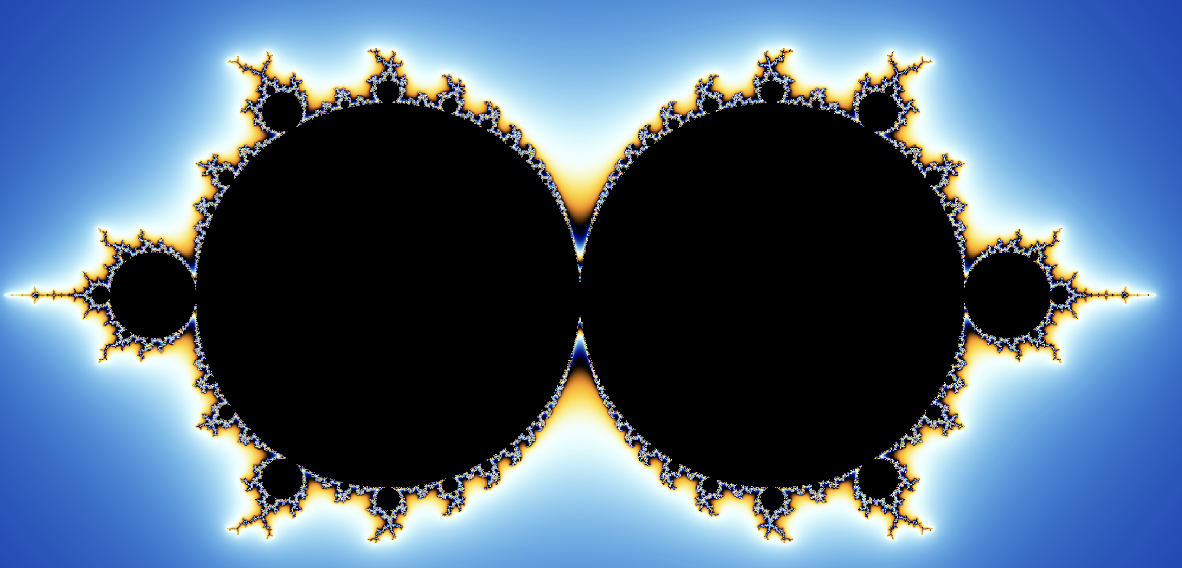}\\
			\includegraphics[height=4cm]{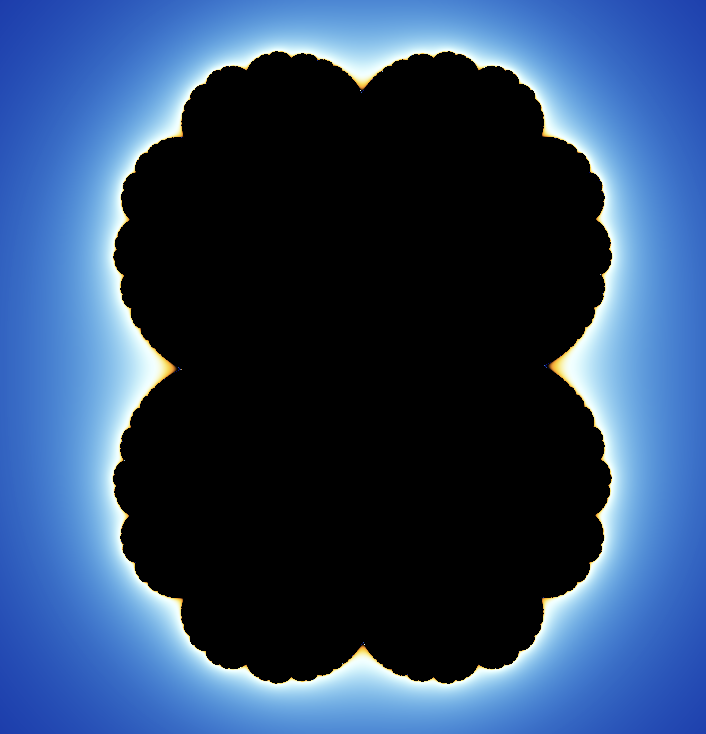} \includegraphics[height=4cm]{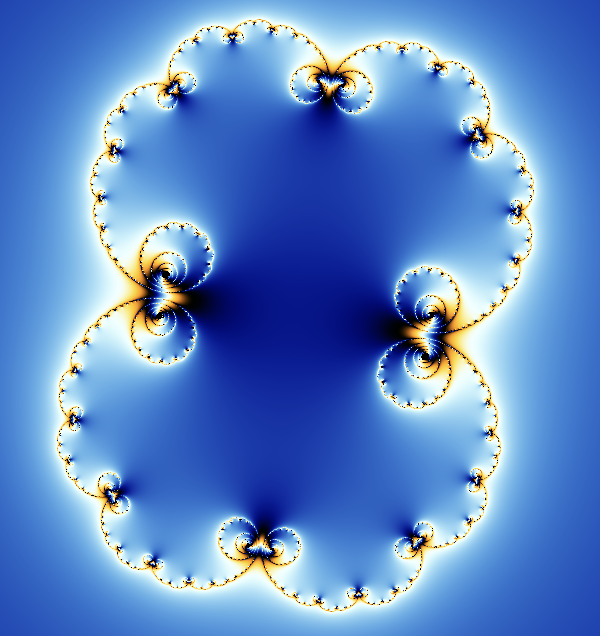}
	\end{center}
	\caption{The Double Mandelbrot set $\{\lambda\in \mathbb C; J(\lambda z+z^2) \text{ is connected}\}$ (up),  the Julia sets $J(\lambda z+z^2)$ for $\lambda=1-\epsilon$ and $\lambda=1+\epsilon i$ with small $\epsilon>0$. According to Theorem \ref{appe}, 
		$J(\lambda_n z+z^2)\rightarrow J(z+z^2) \Longleftrightarrow |{\rm Re}\big(1/(1-\lambda_n)\big)|\rightarrow +\infty$.
% $\lambda=1-\epsilon$ and $\lambda=1+\epsilon i$	
}
	\label{fig:mandelbrot-lambda}
\end{figure}

%The case $\lambda_0=1$ is implicit in many people's work, for example, Lavaurs \cite{Lav},  Douady \cite{D}, Bodart-Zinsmeister \cite{BZ},   Shishikura \cite{Sh98},  Oudkerk \cite{Ou99},   McMullen \cite{Mc00},  and Buff-Tan \cite{BT}.

The case $\lambda_0=1$ 
  appears implicitly in the work of many authors,
   including Lavaurs \cite{Lav}, Douady \cite{D}, Bodart–Zinsmeister \cite{BZ}, Shishikura \cite{Sh98}, Oudkerk \cite{Ou99}, McMullen \cite{Mc00}, and Buff–Tan \cite{BT}.
Recently, Simanjuntak  \cite{S25} proves this case using parabolic implosion for Blaschke products.  For  general $\lambda_0=e^{2\pi i p/q}$,  McMullen \cite{Mc00}  shows the continuity of Hausdorff dimensions 
 ${\rm H.dim}(J(f_{\lambda_n}))\rightarrow {\rm H.dim}(J(f_{\lambda_0}))$ under a slightly stronger condition (i.e., $\lambda_n \rightarrow \lambda_0$ radially).

%\begin{rmk} If $\lambda=1$, then $(p,q)=(0,1)$. The  right part of the equivalence condition is $|{\rm Re}(\lambda_n-1)^{-1}|\rightarrow +\infty.$  This case is proved independently by Simanjuntak \cite{S25} using different methods.  
	%\end{rmk}

Let $(\lambda_n)_n$ be a sequence in $\mathbb C^*$ with $\lambda_n\rightarrow 1$.   We say
$\lambda_n\rightarrow 1$ {\it  horocyclically} if $\lambda_n=e^{L_n+i\theta_n}$ satisfies $\theta_n^2/L_n\rightarrow 0$ (see \cite{Mc00}), 
or equivalently
$$\Big|{\rm Re}\Big(\frac{1}{1-\lambda_n}\Big)\Big|\rightarrow +\infty.$$

 Now let $f_n\rightarrow f_0$ algebraically. Assume $f_0$ has  a parabolic periodic point $\zeta$. 
Then there is a minimal integer $l\geq 1$ so that 
 $f_0^l(\zeta)=\zeta$ and $(f_0^l)'(\zeta)=1$ (note that the $f_0$-period of $\zeta$ is a divisor of $l$). Suppose
 \begin{itemize}
 	\item there are fixed points $\zeta_n$ of $f^l_n$ converging to $\zeta$; and
 	\item their multipliers $\lambda_n=(f^l_n)'(\zeta_n)$ % $\lambda_n=(f_n^l)′(\zeta_n)$ 
 	converge to $1$ horocyclically.
 	\end{itemize}

%(a) There are fixed points $\zeta_n$ of $f^l_n$ converging to $\zeta$; and

%(b) 
 
 Then we say $f_n\rightarrow f_0$  {\it horocyclically} at $\zeta$. 
If these conditions hold for all $f_0$-parabolic points  $\zeta$, we say  $f_n\rightarrow f_0$  {\it horocyclically}.

We call $(f_n)_n$ 
% A sequence  with $f_n\rightarrow f_0$ is called
  a {\it generic perturbation of $f_0$ at $\zeta$}, if  the $f_0$-parabolic  point $\zeta$  splits  into 
   non-parabolic periodic points for $f_n$ (i.e., there are no parabolic periodic points for $f_n$ near $\zeta$); a  {\it generic perturbation of $f_0$} if it is a generic perturbation of $f_0$ at  all $f_0$-parabolic points.

McMullen \cite{Mc00} proved that if $f_0$ is geometrically finite, and $f_n\rightarrow f_0$  both horocyclically and preserving critical relations on Julia sets, then 
$J(f_{n})\rightarrow J(f_0)$ (it is possible  that ${\rm H.dim}(J(f_n))\not\rightarrow {\rm H.dim}(J(f_{0}))$).   
We  prove a converse statement:

%	Let $f$ be a rational map with parabolic points.

\begin{thm} \label{hdc-horo} 
	Let $f_0$ be a rational map with parabolic points.   Let   $(f_n)_n$ be a generic perturbation of $f_0$.  If $J(f_{n})\rightarrow J(f_0)$, then 
	$f_n\rightarrow f_0$ horocyclically.
\end{thm}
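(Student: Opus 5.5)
We argue by contraposition. Suppose $(f_n)_n$ is a generic perturbation of $f_0$ and $f_n\not\to f_0$ horocyclically. We will produce a subsequence along which $J(f_n)$ does not converge to $J(f_0)$.

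\emph{Reduction.} Since $J(f_0)$ is lower semicontinuous (repelling cycles persist), only the upper half of Hausdorff convergence can fail. So it suffices to find a point $w$ in the Fatou set of $f_0$ and points $w_n\in J(f_n)$ with $w_n\to w$ along a subsequence.

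\emph{Local setup.} Failure of horocyclic convergence means there is a parabolic point $\zeta$ with period data $l$ such that one of two things happens along a subsequence: either no fixed points $\zeta_n$ of $f_n^l$ converge to $\zeta$ with multipliers tending to $1$, or every such choice satisfies $|{\rm Re}(1/(1-\lambda_n))|\le M$.

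Replace $f_0$ by $g_0=f_0^l$ near $\zeta$ and pass to an iterate so that $g_0(z)=z+az^{\nu+1}+\cdots$. Here $\nu$ is the number of attracting petals; the full multiplicity is $\nu+1$, after accounting for the rotational part when $\zeta$ is a cycle of a $q$-th root of unity. Under a generic perturbation, $\zeta$ splits into $\nu+1$ fixed points of $g_n=f_n^l$, none of them parabolic. The holomorphic index formula gives
$$\sum_j \frac{1}{1-\lambda_{n,j}}\to \iota(g_0,\zeta),$$
where $\iota(g_0,\zeta)$ is the finite résidu itératif.

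For $\nu=1$ this sum has two terms, one for each of the two fixed points. The first quantities are bounded up to the finite index, so bounded real parts for one fixed point imply bounded real parts for the other. For general $\nu$ we use Oudkerk's gate structure. After rescaling, the perturbed fixed points organize into gates between consecutive petals. A gate is "open" precisely when the associated Lavaurs phase, essentially $1/(1-\lambda)$ of a fixed point or pair at that gate, has bounded real part modulo integer translation. This is the step where the tree and edge-dynamics analysis of the paper would enter. Non-horocyclicity at some fixed point should force at least one gate with bounded ${\rm Re}$ of the Fatou-coordinate phase. Passing to a subsequence, the imaginary parts may tend to infinity while ${\rm Re}(1/(1-\lambda_n))\to \sigma$ stays bounded.

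\emph{Implosion.} At an open gate with bounded real phase, parabolic implosion (Douady, Lavaurs, Oudkerk) gives the following. For suitable integers $k_n\to\infty$, the iterates $g_n^{k_n}$ converge locally uniformly on an attracting petal $P$ to a Lavaurs map
$$L_\sigma=\phi_{\rm rep}^{-1}\circ T_\sigma\circ\phi_{\rm att},$$
which sends $P$ into a neighbourhood of the repelling petal at $\zeta$. The boundedness of the real part is exactly what makes the transit time $k_n$ well defined modulo bounded corrections. The imaginary part going to infinity is harmless, and in the degenerate case the points get pushed toward the "cylinder ends".

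\emph{Producing Julia points in the Fatou set.} Take the attracting basin component $B$ of $f_0$ containing the petal $P$. Its boundary $\partial B$ meets the repelling petal in Julia points. Choose $y\in J(f_0)$ in the image of the repelling petal under $\phi_{\rm rep}^{-1}$, and pick $w\in P$ with $L_\sigma(w)=y$; this is possible since $L_\sigma$ is a nonconstant holomorphic map onto a region containing Julia points. The point $w$ lies in the Fatou set of $f_0$. Now take repelling periodic points $y_n\in J(f_n)$ converging to $y$. By Hurwitz's theorem applied to $g_n^{k_n}\to L_\sigma$, there exist $w_n\to w$ with $g_n^{k_n}(w_n)=y_n$. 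Backward invariance of $J(f_n)$ then gives $w_n\in J(f_n)$. Thus $J(f_n)$ accumulates on $w\notin J(f_0)$, which is a contradiction.

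\emph{Main obstacle.} The hardest step is the combinatorial one for multiplicity $\nu>1$, and in the case where fixed points with multipliers near $1$ are absent. There one must show that failure of the horocyclic condition at some $\zeta_n$ really produces an open gate with bounded real Lavaurs phase along a subsequence, rather than being compensated by other gates. Here I would try to show that the phases at the gates are, up to bounded error, sums of $1/(1-\lambda_{n,j})$ over the fixed points on one side of an edge in Oudkerk's tree. A bounded real part at a leaf then propagates to an edge.
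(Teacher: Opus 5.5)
There is a genuine gap, and it sits at the step you yourself flag as the main obstacle; the strategy you sketch for that step also cannot work. Your reduction (lower semicontinuity, so you only need Julia points of $f_n$ accumulating in the Fatou set of $f_0$) and your implosion/Hurwitz argument are fine. They amount to Proposition \ref{ak-limit1} in the special case where a single gate $k$ has ${\rm Im}\,\tau_k(f_n)$ bounded, i.e.\ Oudkerk's algorithm stops at $r=1$. (For a generic perturbation every gate is open in the paper's sense; what you call an open gate is really a gate with bounded ${\rm Im}\,\tau_k$.)

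Note how the theorem is meant. Its proof, and its use in Theorem \ref{appe}, require $|{\rm Re}\,\iota(f_n,v(f_n))|\to\infty$ for \emph{every} one of the $\nu+1$ fixed points near $\zeta$. So the negation is only that \emph{one} fixed point has bounded ${\rm Re}\,\iota$ along a subsequence. Your first alternative, that no fixed points have multipliers tending to $1$, never occurs. Your second, that every choice has bounded real part, negates only the weak ``some fixed point is horocyclic'' statement. In that case every gate phase is bounded and your argument works, but it is essentially the easy direction (2)$\Rightarrow$(1) of Theorem \ref{limit-A}, not Theorem \ref{hdc-horo}.

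The real problem is that the gate phase is not governed by ``a fixed point or pair at that gate.'' By Proposition \ref{phase}, ${\rm Im}\,\tau_k(f_n)=2\pi\,{\rm Re}\,\iota(f_n,V^+(k))+O(1)$: it is the total index of the whole branch of the tree on one side of $\boldsymbol e(k)$. If the non-horocyclic vertex is a leaf, that branch is the vertex itself and you do get a bounded gate phase; this is all your last sentence covers. If it is an interior vertex $v$, the branches around $v$ can all be horocyclic with cancelling signs. Already for $\nu=2$ the tree is a path $v_1 - v - v_2$, and one can have ${\rm Re}\,\iota(v_1)\to+\infty$, ${\rm Re}\,\iota(v_2)\to-\infty$ and ${\rm Re}\,\iota(v)$ bounded. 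Then both gates have ${\rm Im}\,\tau_k\to\pm\infty$, no single gate yields a Lavaurs limit, and your implosion step has nothing to act on.

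The missing idea is to follow orbits through \emph{chains} of gates. An orbit crossing a gate with unbounded phase is funnelled past a fixed point into the next gate around it, in the orientation $\mathcal O(v)$. The relevant quantity is then the cumulative phase ${\rm Im}\sum_{j\le r}\tau_{a_j}(f_n)$ from Oudkerk's algorithm. When some cumulative phase stays bounded, a composition of lifted \'Ecalle maps gives a Lavaurs limit (Proposition \ref{ak-limit1}). In the example, the algorithm started at one of the two gates runs through both gates at $v$. Its cumulative phase is $\pm 2\pi{\rm Re}\,\iota(f_n,\{v_1,v_2\})+O(1)=\mp 2\pi{\rm Re}\,\iota(f_n,v(f_n))+O(1)$, which is bounded, so it stops at $r=2$.

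For a general interior vertex the paper needs a genuine combinatorial argument. Assume all the algorithms run forever, which $J(f_n)\to J(f_0)$ forces via Proposition \ref{ak-limit1}. Lemma \ref{se-h-rep} then identifies cumulative phases with indices of unions of consecutive branches around $v$. The grouping into $X_1,\dots,X_\ell$ together with \eqref{asm3} then shows that $v$ must be horocyclically attracting or repelling. None of this appears in your proposal, so the argument is incomplete exactly for interior vertices of the gate tree.
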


%However, 
The converse of Theorem \ref{hdc-horo} is false.  %  as demonstrated by a counter due to 
Oudkerk \cite{Ou02} gave the example:
%\begin{rmk} 
%	The converse of Theorem \ref{hdc-horo} is false.  Oudkerk \cite{Ou02} gave the example  
	$$f_n(z)=z+z^4+1/n, \ n\geq 1;  \ f_0(z)=z+z^4.$$
	One may verify that $f_n\rightarrow f_0$ horocyclically, but the Julia set $J(f_n)$ does not converge to $J(f_0)$\footnote{A rigorous proof of the statement $J(f_n)\not\rightarrow J(f_{0})$ can be extracted from \S \ref{h-e}.}. %The essential reason for discontinuity 
In this example, the $f_0$-parabolic  fixed point $0$ splits into two repelling fixed points and two attracting fixed points for $f_n$.   
%\end{rmk}

 Let   $(f_n)_n$ be generic perturbation of $f_0$ at $\zeta$, which is an $f_0$-parabolic periodic point   with multiplicity\footnote{Let $l\geq 1$ be the minimal integer  so that 
 	$f_0^l(\zeta)=\zeta$ and $(f_0^l)'(\zeta)=1$, the multiplicity of $\zeta$ is the order of the zero of $f_0^l(z)-z$ at $\zeta$.} say $\nu+1\geq 2$.
%For each
 %	Let $f$ be a rational map with  $\zeta$  of 
We say $(f_n)_n$ is a
\begin{itemize}
	\item      {\it leaned sequence} of $f_0$ at $\zeta$, if  $\zeta$  splits into  $\nu+1$ non-attracting points or  $\nu+1$ non-repelling   points  for $f_n$;
		
			\item    {\it  balanced sequence} of $f_0$ at $\zeta$, if  $\zeta$  splits into  at least two  attracting points  and at least two repelling   points  for $f_n$;
				
			\item     {\it  $*$-sequence}  of $f_0$ at $\zeta$, if  $\zeta$  splits into  
			either a single attracting point accompanied by $\nu$ non-attracting points, or a single repelling point accompanied by $\nu$ non-repelling points  for $f_n$.
			% one attracting point together with $\nu$ non-attracting points, or one  repelling  point together with  $\nu$  
		%	 non-repelling points
	\end{itemize}

We call  {\it $(f_n)_n$   a $\omega$ sequence of $f_0$} if   $(f_n)_n$  is a  $\omega$ sequence of $f_0$ at all $f_0$-parabolic points, here $\omega\in \{\text{leaned},   \text{balanced}, * \text{-}\}$.  Note that any generic perturbation  $(f_n)_n$  of $f_0$  contains a subsequence that falls into one of the three types above.

%will  contain a  subsequence belonging to one of the three types above.
% which is one of the above three types.

If   $(f_n)_n$ is a  leaned sequence, it can be shown that $J(f_0)$ is strictly smaller than any Hausdorff limit of  $J(f_n)$, hence $J(f_n) \not\rightarrow J(f_0)$ (see Remark \ref{leaned}).  For balanced sequence, %the situation is subtle.
by Oudkerk's example above  and Theorem \ref{hdc-horo},  the horocyclic convergence
is strictly weaker than the Hausdorff   convergence  of Julia sets.
%for balanced sequences. %   
%  there are examples $(f_n)_n$  for which	$f_n\rightarrow f_0$ horocyclically but  $J(f_n)$ does not converge to $J(f_0)$ (see \cite{Ou02}).
%one can also construct examples $(f_n)_n$  for which $f_n\rightarrow f_0$ horocyclically and  $J(f_n)$   converges to $J(f_0)$ (\textcolor{red}{see Section}). 
%This example, together with
 %, implies that 

Our next theorem shows that for  $*$-sequences,  the two notions of convergence are equivalent.
% kinds of convergence are equivalent:
%  continuity of Julia sets.

%For these three types sequence, the continuity of Julia sets 

%We say  {\it $f_n$ is a $*$-sequence of $f$} if  $f_n$ is a $*$-sequence of $f$ at all parabolic points of $f$.

\begin{thm} \label{h-equiv}
 	Let $f_0$ be a rational map with parabolic points and without rotation domains.   Let   $(f_n)_n$ be a $*$-sequence of $f_0$. Then
	$$J(f_{n})\rightarrow J(f_0) \Longleftrightarrow  f_n\rightarrow f_0   \  \text{ horocyclically}.$$
\end{thm}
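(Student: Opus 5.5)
The forward implication is already contained in Theorem \ref{hdc-horo}: a $*$-sequence is in particular a generic perturbation, so $J(f_n)\rightarrow J(f_0)$ forces $f_n\rightarrow f_0$ horocyclically. The work is therefore in the converse. Assume $f_n\rightarrow f_0$ horocyclically and $(f_n)_n$ is a $*$-sequence. Lower semicontinuity is automatic: every Hausdorff limit of $J(f_n)$ contains $J(f_0)$, because repelling cycles of $f_0$ persist. So it suffices to show that any Hausdorff limit $J_\infty$ of a subsequence of $J(f_n)$ satisfies $J_\infty\subset J(f_0)$. Equivalently, every compact set $K$ in the Fatou set $F(f_0)$ lies in $F(f_n)$ for all large $n$. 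Since $f_0$ has no rotation domains, each component of $F(f_0)$ is eventually mapped into an attracting or parabolic basin. Attracting basins persist by the usual stability argument, and taking preimages reduces everything to compact sets $K$ inside immediate parabolic basins, close to the petals at a parabolic point $\zeta$.

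Near $\zeta$ of multiplicity $\nu+1$, I would use Oudkerk's gate structure for $f_n^l$. The $\nu+1$ perturbed fixed points $\zeta_n^{(0)},\dots,\zeta_n^{(\nu)}$ organize the $\nu$ attracting petals and $\nu$ repelling petals into a tree of gates. Orbits from attracting petals either get trapped near a nonrepelling fixed point or pass through gates into repelling petals, with transit governed by Fatou coordinates and the "phase" $1/(1-\lambda)$ at each gate.

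In the $*$-case, say there is exactly one attracting point $\zeta_n^{(0)}$ (the repelling case is symmetric after passing to inverse branches, or is handled similarly with roles of petals exchanged), and the other $\nu$ points are non-attracting. The horocyclic hypothesis, applied to all perturbed fixed points in the $*$-configuration, makes $|{\rm Re}(1/(1-\lambda_n^{(j)}))|\to\infty$ with the correct signs. The key step is then to run the edge dynamics on the gate tree, following Oudkerk's algorithm, and show the following. For $n$ large, every point in a fixed compact piece of an attracting petal stays, under iteration of $f_n^l$, in a region of the tree that eventually funnels into the basin of $\zeta_n^{(0)}$, and never escapes through a gate into a repelling petal.

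The horocyclic condition is used through the translation number. In the Fatou coordinate near a gate, $f_n^l$ acts approximately as $w\mapsto w+1$ on a cylinder whose "width" has real part $\sim {\rm Re}(1/(1-\lambda))$. When that real part tends to $+\infty$ or $-\infty$, orbits either take arbitrarily long to cross or are absorbed, instead of emerging with a bounded Lavaurs phase. That bounded phase is exactly what produces implosion.

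I expect the main obstacle to be this combinatorial-analytic step. It requires showing that with only one attracting point, the tree of gates has no edge along which a compact set can leak to a repelling direction with bounded phase. It also requires controlling uniformly the multiple gates simultaneously, which is where the balanced case (Oudkerk's $z+z^4+1/n$) fails. Once it is established that compact subsets of attracting petals are mapped by $f_n^l$ into a fixed neighborhood that is forward-invariant and contained in the attracting basin of $\zeta_n^{(0)}$, we get $K\subset F(f_n)$. Pulling back by $f_n$ along with $f_0$, using that critical points of $f_0$ in the Fatou set converge appropriately, extends this to all compacta in $F(f_0)$ and gives $J(f_n)\rightarrow J(f_0)$.
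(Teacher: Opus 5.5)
Your outline follows the same route as the paper: the forward direction from Theorem \ref{hdc-horo}, and for the converse, lower semicontinuity plus a reduction to compacta in the immediate basins $A_k$, analysed with Oudkerk's gate tree and algorithm. There is a genuine gap, however: the step that carries the whole converse is only announced, not proved. You must show that for a horocyclic $*$-sequence, Oudkerk's algorithm started at \emph{every} $k\in\mathbb Z_\nu$ never exits at step (3), i.e.\ every partial sum ${\rm Im}\sum_{j\le r}\tau_{a_j}(f_n)$ tends to $\pm\infty$. Only then does Proposition \ref{ak-limit2} give $f_n^j\to v(f_n)$ uniformly on compacta of $U_{k,+,f_0}$, and hence $A_k\cap X=\emptyset$ for all $X\in\mathcal L((J(f_n))_n)$.

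This does not follow from horocyclicity of the individual multipliers. By Proposition \ref{phase}, the phases are governed by \emph{sums} of indices over the fixed points on one side of a gate, and these sums accumulate along the itinerary, so indices of opposite sign can cancel. For example, in Oudkerk's example $z+z^4+1/n$ the indices $n^{3/4}e^{i\pi/4}/4$ and $n^{3/4}e^{3i\pi/4}/4$ are both horocyclic, but their sum has real part $0$.

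The missing argument is the bookkeeping of Proposition \ref{find-e} with $V_*=V\setminus\{v_0\}$. Follow the edge sequence $\bs(e)$: rotate around the pivot $v(e_j)$ in the order $\mathcal O(v(e_j))$, then step towards $v_0$. One then gets $\sum_{s\le j}\tau_{\boldsymbol k(e_s)}(f_n)=2\pi i c_j\,\iota(f_n,Y(e_j))+O(1)$, where $Y(e_{j+1})$ is defined as follows:
\begin{itemize}
\item $Y(e_{j+1})=Y(e_j)\cup(V(u(e_{j+1}))\cap V_*)$ while the pivot stays fixed;
\item $Y(e_{j+1})=(V(u(e_{j+1}))\cap V_*)\setminus Y(e_j)$ when the pivot moves to its parent. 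This set is nonempty because it contains the pivot just left, which is not $v_0$.
\end{itemize}
Once the itinerary winds around $v_0$, the partial sums become positive multiples of such index sums over vertices $\neq v_0$. Since every $v\neq v_0$ has ${\rm Re}\,\iota(f_n,v(f_n))\to-\infty$, nothing cancels, every partial sum diverges, and $E_*=E$.

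Your heuristic for how horocyclicity enters also points at the wrong quantity. ${\rm Re}\,\iota$ controls ${\rm Im}\,\tau$, i.e.\ the height at which an orbit crosses a gate (how close it passes to a fixed point), not the crossing time. The crossing time diverges anyway, since ${\rm Re}(e^{-i\phi}\tau_k)\to-\infty$ for every subsequence produced by Proposition \ref{sp}, horocyclic or not.

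The repelling $*$-case is not settled by passing to inverse germs either. Inversion exchanges the attracting and repelling petals of $f_0$, so it controls backward orbits in repelling petals; that is how the paper treats external rays. It says nothing about compacta in $A_k$. That case needs its own combinatorics (Remark \ref{repelling-plus}):
\begin{itemize}
\item the vertices $v\neq v_0$ are now horocyclically attracting;
\item $E_*=\{[v',v]: v\neq v_0\}$, with $v'$ the parent of $v$, which is again all of $E$;
\item orbits from $U_{k,+,f_0}$ are absorbed by the attracting vertex around which the eventually periodic itinerary of Proposition \ref{a-preperiodic} rotates. This vertex depends on $k$, so there is no single $\zeta_n^{(0)}$ to funnel into.
\end{itemize}
Finally, the pull-back step needs no control of critical points. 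If $f_n^m(K)$ lies in a compact subset of $F(f_n)$, then $K\subset F(f_n)$ by complete invariance of the Fatou set.
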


%\begin{thm}  \label{cont-j}
%	Let $f\in {\rm Rat}_d$ have parabolic points and no rotation domains.  If $f_n\rightarrow f$   horocyclically, then  $J(f_{n})\rightarrow J(f)$.
%\end{thm}

%We shall  prove a more general version of
Our approach to proving  Theorem \ref{h-equiv}
%The ideas in the proof 
allows us  to study the relations between
 the (partial) horocyclic convergence of multipliers,  the kernel convergence of Fatou components and the Hausdorff convergence of invariant curves.   The latter  is motivated by  studying  the Hausdorff limits of external rays for polynomials, %which is fundamental in polynomial dynamics, 
as demonstrated by the recent work of Petersen and Zakeri  \cite{PZ24,PZ24b}.

In the following, without loss of generality,   let $f_0$  have  a parabolic fixed point $\zeta$ with multiplier $1$ and   multiplicity $\nu+1$. Let $A_1, \cdots, A_\nu$ be all   immediate parabolic basins of $f_0$ at $\zeta$, labeled in  positive cyclic order.   Let $f_n\rightarrow f_0$ be  a  generic   perturbation  of $f_0$ at $\zeta$.

For each $1\leq k\leq \nu$, let $\gamma_k: [0, +\infty)\rightarrow A_k$ be an $f_0$-invariant curve  $f_0(\gamma_k(t))=\gamma_k(t+1)$, so that $\gamma_k(t)$ converges to $\zeta$ along the $k$-th attracting direction as $t\rightarrow +\infty$. 
%$\lim_{t\rightarrow +\infty}\gamma_k(t)=0$ and $\gamma_k$ is asymptotic to the $k$-th attracting.
Let $\gamma_{n,k}: [0, +\infty)\rightarrow \mathbb C$ be an $f_n$-invariant curve  $f_n(\gamma_{n,k}(t))=\gamma_{n,k}(t+1)$ so that $\gamma_{n,k}$ converges locally and uniformly to $\gamma_k$ in $[0, +\infty)$ as $n\rightarrow +\infty$.
%For each $1\leq k\leq \nu$, let $\gamma_k: (-\infty, 0]\rightarrow \mathbb D(0, r_0)$ be an $f$-invariant curve  $f(\gamma_k(t))=\gamma_k(t+1)$ converging to $\zeta$ (i.e. $\lim_{t\rightarrow -\infty}\gamma_k(t)=\zeta$) in  the $k$-th repelling direction. Let $\gamma_{n,k}: (-\infty, 0]\rightarrow \mathbb C$ be an $f_n$-invariant curve  $f_n(\gamma_{n,k}(t))=\gamma_{n,k}(t+1)$ so that $\gamma_{n,k}$ converges locally and uniformly to $\gamma_k$ in $(-\infty, 0]$. 
%It is possible that  $\gamma_{n,k}$ does not  converge  to $\gamma_k$  uniformly in $(-\infty, 0]$. 
We denote by $\gamma_{n,k} \rightrightarrows \gamma_k$ the  uniform convergence of $\gamma_{n,k}$ to $\gamma_k$ in $[0, +\infty)$. 
Note that the uniform convergence $\gamma_{n,k} \rightrightarrows \gamma_k$ implies the Hausdorff convergence $\overline{\gamma_{n,k}}\rightarrow \overline{\gamma_k}$, and is strictly stronger than the  local and uniform convergence.
 If $\gamma_{n,k}$ does not converge  to $\gamma_k$ uniformly in $[0, +\infty)$,  it is possible that no subsequence of     $(\gamma_{n,k})_n$   converges to $\gamma_k$ uniformly in $[0, +\infty)$, we denote this extreme case by  $\gamma_{n,k} \not\rightrightarrows  \gamma_k$.  
 %and by $\gamma_{n,k} \not\rightrightarrows  \gamma_k$    the failure of such uniform convergence.
 %If $\gamma_{n,k} \not\rightrightarrows  \gamma_k$, 
%We   denote  by $\gamma_{n,k} \rightrightarrows \gamma_k$ the uniform convergence of  $\gamma_{n,k}$   to $\gamma_k$   in $(-\infty, 0]$; $\gamma_{n,k} \not\rightrightarrows \gamma_k$ the non uniform convergence of  $\gamma_{n,k}$   to $\gamma_k$   in $(-\infty, 0]$.

	For  a sequence of compact sets $(X_n)_n$ in $\C$,  let $\mathcal{L}((X_n)_n)\subset \mathcal C(\C)$ be the set of   Hausdorff limits of all convergent subsequences of  $(X_n)_n$. \footnote{By Blaschke selection theorem, any sequence of compact sets  $(X_n)_n$  in $\C$ admits a subsequence converging to a compact set in Hausdorff metric. Hence $\mathcal{L}((X_n)_n)\neq \emptyset$.}
	
%	 its accumulation set is defined by
%\begin{equation}\label{accu}
%	$$\mathcal{A}((X_n)_n)=\bigcap_{k=1}^\infty\overline{\bigcup_{n\geq k} X_n}.$$
%\end{equation}

%for  any   $*$-sequence  $(f_n)_n$ of $f$ at $\zeta$, there exist an integer $0\leq \ell\leq \nu$ and a subsequence $(f_{n_j})_j$ such that   $f_{n_j}\rightarrow f$ $\ell$-horocyclically at $\zeta$. 

\begin{thm} \label{limit-A}   	Let $f_0\in {\rm Rat}_d$ have a parabolic fixed point $\zeta$ of multiplicity $\nu+1$, $f'_0(\zeta)=1$.  
%	Let $f_0$ be a rational map with a parabolic fixed point $\zeta$, and $f'_0(\zeta)=1$.  
	 Let   $(f_n)_n$ be a generic perturbation of $f_0$ at $\zeta$. Then the following three statements are equivalent:
	
	(1).  For each $n$, there is an $f_n$-fixed point  $\zeta_n$ so that   
	$$\zeta_n\rightarrow \zeta,  \text{ and } f_n'(\zeta_n)\rightarrow 1  \text{ horocyclically}.$$

	% and the    multiplier $\lambda_n=f_n'(\zeta_n)$ converges to $1$ horocyclically.
	
	% (2). For any  subsequence of $(f_n)_n$, there   exist a further subsequence $(f_{n_j})_j$  and  an immediate parabolic basin $A$ of $f$ at $\zeta$ so that 
	
	(2). Every subsequence of $(f_n)_n$ admits a further subsequence $(f_{n_j})_j$ and an immediate parabolic basin $A$ of $f_0$ at $\zeta$ such that
	$$ X \cap A=\emptyset, \ \forall X\in \mathcal{L}((J(f_{n_j}))_j). $$
			% \begin{equation} \label{limJ-basin}
		%\end{equation}
		
		(3). Every subsequence of $(f_n)_n$ admits a further subsequence $(f_{n_j})_j$ and  some   $  k\in [1, \nu]\cap \mathbb N$,  so that $\gamma_{n_j,k} \rightrightarrows \gamma_k$.
	\end{thm}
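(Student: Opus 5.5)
We will establish the cycle $(1)\Rightarrow(3)\Rightarrow(2)\Rightarrow(1)$, passing freely to subsequences. Throughout we work in a small disk $D=\mathbb D(\zeta,r_0)$ in which $f_n$ has exactly $\nu+1$ fixed points $\zeta_n^{0},\dots,\zeta_n^{\nu}$ converging to $\zeta$, none of them parabolic by genericity. After a subsequence we may assume the configuration of these fixed points (suitably rescaled) converges. We then apply Oudkerk's algorithm to obtain the gate structure: a tree whose edges are the ``gates'' through which orbits of $f_n$ pass between consecutive attracting and repelling petals of $f_0$. For each gate we also obtain near-Fatou coordinates $\Phi_{n}$. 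In these coordinates $f_n$ becomes close to translation by $1$ on a long strip, and the transit time across the gate is governed by the multipliers of the fixed points bounding that gate.

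For $(1)\Rightarrow(3)$, note that the fixed point $\zeta_n$ with horocyclically converging multiplier determines a gate (edge) of the tree. For the basin $A_k$ adjacent to that gate, the near-Fatou coordinate of $f_n$ differs from the limiting attracting Fatou coordinate of $f_0$ by a term comparable to $\mathrm{Re}\,(1/(1-\lambda_n))$. Write $\lambda_n=f_n'(\zeta_n)=e^{2\pi i \alpha_n}$ locally. Horocyclic convergence says the ``Lavaurs phase'' $-1/\alpha_n$ has real part tending to $\pm\infty$. Equivalently, orbits entering the gate either stay in the attracting petal forever, converging to the attracting $\zeta_n$, or escape only after a time tending to infinity with bounded distortion inside the petal. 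We then construct $\gamma_{n,k}$ by pulling back the straight ray $\Phi_n^{-1}(t+\text{const})$, and we show that its tail stays uniformly close to $\gamma_k$ near $\zeta$. Combined with the locally uniform convergence on compacts, this yields $\gamma_{n,k}\rightrightarrows\gamma_k$. The choice of $k$ is made using the edge dynamics: the horocyclic fixed point is attracting or repelling, and the $*$/leaned/balanced bookkeeping tells us which basin adjoins it.

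For $(3)\Rightarrow(2)$, take $A=A_k$. Uniform convergence of $\gamma_{n,k}$ to $\gamma_k$ gives, via the invariance $f_n(\gamma_{n,k}(t))=\gamma_{n,k}(t+1)$, a compact piece of $\gamma_k$ together with its forward orbit. A neighbourhood of this set is mapped by $f_n$ into itself, up to small error, and is attracted to a fixed point of $f_n$ near $\zeta$. Pulling back, every compact $K\subset A_k$ is eventually contained in the Fatou set of $f_n$. The key step is to pick, for a compact $K\subset A_k$, an iterate $f_0^m(K)$ lying in a petal sector around $\gamma_k$. The uniform closeness of the curves then shows $f_n^{m+j}(K)$ stays in a fixed attracting region for all $j$, so $K\cap J(f_n)=\emptyset$. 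Hence every Hausdorff limit $X$ misses $A_k$.

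For $(2)\Rightarrow(1)$, argue by contradiction. Suppose no fixed point has horocyclically converging multiplier along some subsequence. Then every gate has bounded $\mathrm{Re}$ of the Lavaurs phase, and after a further subsequence the phases converge to finite values $\sigma$. Parabolic implosion (Lavaurs maps $g_\sigma$) then shows that, for each basin $A_j$, the limit of $J(f_n)$ contains the set where the Lavaurs map sends points into $J(f_0)$. In particular it contains preimages of $J(f_0)$ under $g_\sigma$ inside $A_j$, which are nonempty since $g_\sigma$ maps $A_j$ onto a neighbourhood of the repelling petal. This contradicts (2) for every $A$. The main obstacle is this last step in the presence of several gates with possibly different phase behaviours, and in particular the step of making Lavaurs maps available for every basin. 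To handle it, I would use the edge dynamics on Oudkerk's tree to trace, for each basin, an orbit that passes through a gate with finite phase and then lands near $J(f_0)$, while the repelling fixed points of $f_n$ supply points of $J(f_n)$ accumulating inside $A_j$.
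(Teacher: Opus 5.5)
There is a genuine gap in (1)$\Rightarrow$(3). A horocyclic fixed point is a \emph{vertex} of the gate tree, not an edge, and knowing one such vertex does not tell you which basin to use. If $\mathrm{Re}\,\iota(f_n,\zeta_n)\to-\infty$, you must first use $\sum_{v\in V}\iota(f_n,v(f_n))=\iota(f_0,0)+o(1)$ to find a vertex $v_0$ with $\mathrm{Re}\,\iota\to+\infty$. Even then, consider an edge $[v_0,v_s]$, and let $Y_t$ be the vertex set of the branch of $T\setminus\{v_0\}$ through $v_t$. Oudkerk's algorithm keeps circling $v_0$ only as long as the cumulative sums $\mathrm{Re}\,\iota(f_n,Y_s\sqcup\cdots\sqcup Y_{s+l})$ diverge to $-\infty$. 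Once one of them stays bounded, the algorithm terminates. Then $A_{\boldsymbol k([v_0,v_s])}$ meets every Hausdorff limit of $J(f_n)$ and the corresponding curves do not converge uniformly; the remark after the theorem notes that this really happens for some basins. The missing idea is the paper's Claim: after passing to a subsequence there is an $s$ with $\mathrm{Re}\,\iota(f_{n_j},Y_s^{(l)})\to-\infty$ for every $0\le l\le\delta-1$. It is obtained by a greedy cyclic decomposition exploiting $\sum_{t}\mathrm{Re}\,\iota(f_{n_j},Y_t)=-\mathrm{Re}\,\iota(f_{n_j},v_0(f_{n_j}))+O(1)\to-\infty$. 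For $k=\boldsymbol k([v_0,v_s])$ the algorithm is then periodic around $v_0$, and Propositions \ref{ak-limit2} and \ref{lic} give (2) and (3) simultaneously. Your appeal to ``$*$/leaned/balanced bookkeeping'' cannot replace this, since the theorem concerns arbitrary generic perturbations. Also, building your own curve $\Phi_n^{-1}(t+c)$ does not prove (3), which is about the given curves $\gamma_{n,k}$.

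Your (3)$\Rightarrow$(2) is also unsupported. The condition $\gamma_{n,k}\rightrightarrows\gamma_k$ only says that a single $f_n$-orbit stays in a small disk around $\zeta$. Since $J(f_n)$ comes arbitrarily close to $\zeta$, nothing forces a neighbourhood of $\gamma_{n,k}[0,1]$ to be ``mapped into itself and attracted''. What makes it work is the dichotomy supplied by the algorithm. If the algorithm for $k$ terminated, then $f_n^{N_n}\to g_\sigma$ on compacta of $A_k$, forcing $\liminf_n\mathrm{diam}\,\gamma_{n,k}[N_n,\infty)>0$ and contradicting (3). So it never terminates, and Proposition \ref{ak-limit2} then gives (2). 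The paper avoids a direct (3)$\Rightarrow$(2) altogether and proves (3)$\Rightarrow$(1) by the same contrapositive as (2)$\Rightarrow$(1).

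That last direction in your proposal is essentially the paper's and is correct, but it is easier than you fear. If (1) fails, a subsequence has all $\mathrm{Re}\,\iota(f_{n_j},v(f_{n_j}))$ bounded, so every $\mathrm{Im}\,\tau_k$ is bounded by Proposition \ref{phase}. The algorithm then stops at $r=1$ for every $k$, and Proposition \ref{ak-limit1} applies to all basins at once; there are no mixed phase behaviours to handle. Also, with $\lambda_n=e^{2\pi i\alpha_n}$, horocyclic convergence means $|\mathrm{Im}(1/\alpha_n)|\to\infty$, not the real part.
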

%	\begin{rmk} %Assuming (1),  the parabolic basin $A$   in (2) depends on the choice of the subsequence $(f_{n_j})_j$. 
	
		%	one can not expect there are more than one immediate parabolic basins  of $f$ at $\zeta$ 
		%	Theorem \ref{limit-A} also implies that if we can not find a 
%	\end{rmk}

We remark that in the statement (2),  when $(f_{n_j})_j$ and $A$  are chosen, it is possible  that  
for any other immediate parabolic basin $B\neq A$ of $f_0$ at $\zeta$, 
we have $ X \cap B\neq \emptyset, \ \forall X\in \mathcal{L}((J(f_{n_j}))_j)$.
%$\liminf_{j\rightarrow \infty} J(f_{n_j})\cap B\neq \emptyset$.   
See Theorem  \ref{pb-limit} below.

Buff and Tan \cite[Proposition 6.3]{BT} prove part (2) of Theorem \ref{limit-A} under the condition in terms of stability of polynomial vector fields,
whereas our condition in part (1) is both natural from a dynamical system viewpoint and fundamentally necessary.

A sharper version of Theorem \ref{limit-A} holds for $*$-sequences, which calls for the notion of partial horocyclic convergence.
%For $*$-sequences,  Theorem \ref{limit-A}   has a more precise version. To state such a theorem, we shall introduce the notion of  partial horocyclic convergence.
 %Let $f_0$  have  a parabolic fixed point $\zeta$ with multiplier $1$ and   multiplicity $\nu+1$.
Let    $(f_n)_n$ be a   $*$-sequence of $f_0$ at $\zeta$.
% at its parabolic fixed point $\zeta$ with multiplier $1$ and multiplicity $\nu+1$. 
We say that $f_n\rightarrow f_0$ {\it $\ell$-horocyclically at $\zeta$} for some integer $\ell\in [0, \nu]$,  if the  $\nu$ nearby non-attracting (or non-repelling) fixed points of $f_n$ can be labeled as $\zeta_{n,1}, \cdots, \zeta_{n, \nu}$ such that their multipliers $\lambda_{n,j}=f'_n(\zeta_{n,j})$ satisfy 
\begin{itemize}
	\item  the first $\ell$ multiplier sequences converge to $1$  horocyclically: 
	$$\Big|{\rm Re}\Big(\frac{1}{1-\lambda_{n,j}}\Big)\Big|\rightarrow +\infty,  \ 1\leq j\leq \ell;$$   
	\item  the remaining $\nu-\ell$ multiplier sequences satisfy
	$$\sup_{n\geq 1}\Big|{\rm Re}\Big(\frac{1}{1-\lambda_{n, j}}\Big)\Big|< +\infty, \ \ell< j\leq \nu.$$
\end{itemize}

Note that any $*$-sequence $(f_n)_n$ of $f_0$ at $\zeta$ must admit a subsequence $(f_{n_j})_j$ such that $f_{n_j}\rightarrow f_0$ $\ell$-horocyclically at $\zeta$ for some integer $\ell \in [0, \nu]$. 

See Figure \ref{fig:cubic-near} for an example of $1$-horocyclic convergence.
 %whereas our condition in part (1)  is natural from the perspective of  dynamical systems %intuitive  (possibly weaker) 
%and more importantly is also necessary.
% both weaker
%does not require this assumption.

%Buff and Tan \cite[Proposition 6.3]{BT} prove Theorem \ref{limit-A} (2) under the  condition of a stability  in terms of  polynomial vector fields, rather than Theorem \ref{limit-A} (1). 
% rather than . Our proof in finding $A$ is a little bit tricky.

%A version of Theorem \ref{limit-A}   is proven by 

	\begin{figure}[h]
	\begin{center}
		\includegraphics[height=5cm]{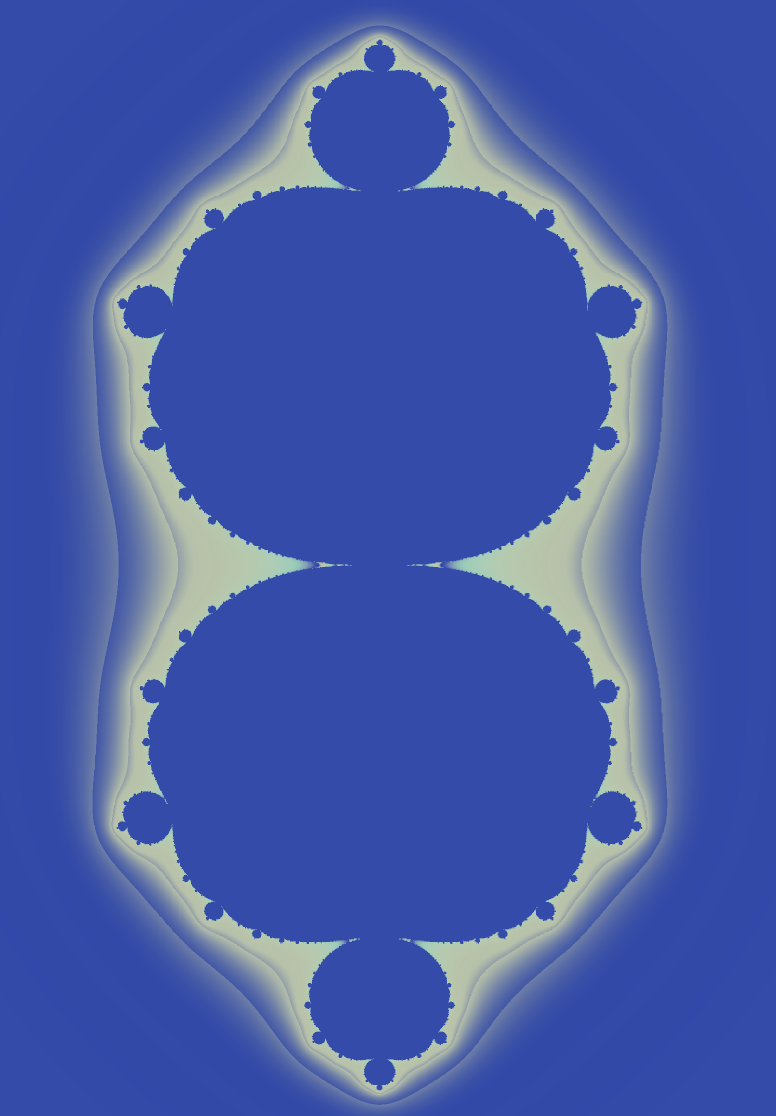} \includegraphics[height=5cm]{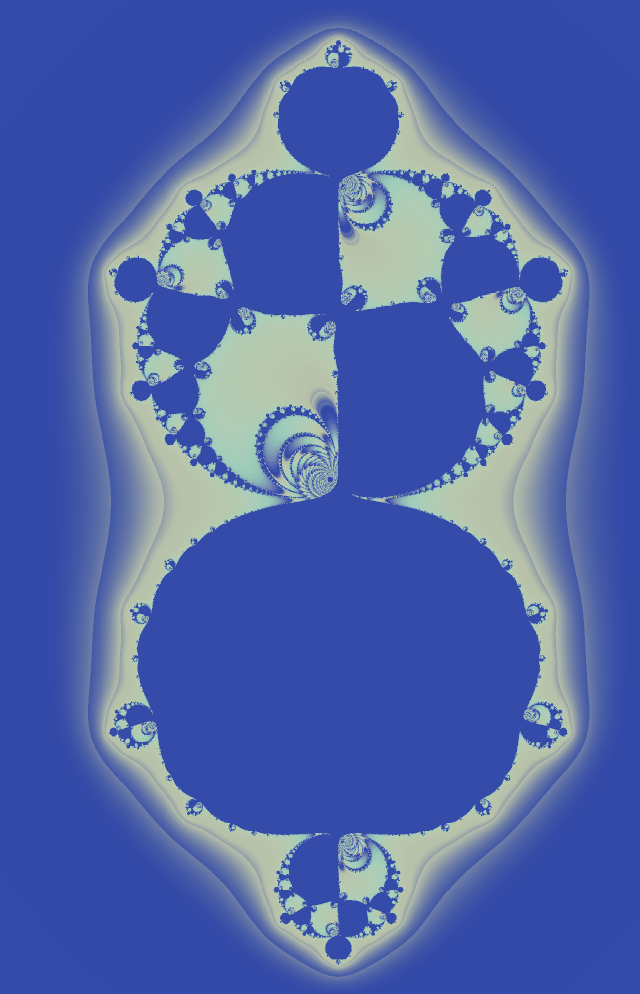}
	\end{center}
	\caption{Julia sets of  $f_{a,b}(z)=z+z((z-a)^2+b)$, where
		%  $J(z+z^3)$ (left)  and $J(f_{a,b})$ (right), where $f_{a,b}(z)=z+z((z-a)^2+b)$ and
		$(a,b)=(0,0)$ (left) and $(0.2i, -0.0005539+0.00012975i)$ (right).  This example  is provided by Jie Cao, and shows the case that $f_{a,b}\rightarrow f_{0,0}$    $1$-horocyclically at $0$.
	}
	\label{fig:cubic-near}
\end{figure}

\begin{thm} \label{pb-limit} 	Let $f_0$ have a parabolic fixed point $\zeta$ of multiplicity $\nu+1$, $f'_0(\zeta)=1$.   Let $f_n\rightarrow f_0$ be  a $*$-sequence of $f_0$ at $\zeta$. Let $\ell\in [0, \nu]$ be an integer.   The following three statements are equivalent:
	
(1).  $f_n\rightarrow f_0$ $\ell$-horocyclically at $\zeta$.
	% and let $1\leq m\leq \nu$.  Assume $m$  attracting (or repelling) multipliers sequences  converge to $1$  horocyclically,

(2).   For every subsequence of $(f_n)_n$, there exist a further subsequence $(f_{n_j})_j$ and a subset $\mathcal I \subset \{1, \dots, \nu\}$ with cardinality $\# \mathcal I  = \ell$ such that 
%Every subsequence of $(f_n)_n$ admits a further subsequence   $(f_{n_j})_j$, a subset $E\subset \{1,\cdots, \nu\}$ with cardinality  $\# E=\ell$ so that 
%\begin{equation} \label{lim-basin}
$$\begin{cases} X\cap A_k=\emptyset, \ \forall  X\in \mathcal{L}((J(f_{n_j}))_j), &\text{ if }  k\in \mathcal I; \\
 X\cap A_k\neq \emptyset,   \ \forall  X\in \mathcal{L}((J(f_{n_j}))_j),  & \text{ if }  k\notin \mathcal I.
\end{cases}$$
% \end{equation}

(3).   For every subsequence of $(f_n)_n$, there exist a further subsequence $(f_{n_j})_j$ and a subset $\mathcal I \subset \{1, \dots, \nu\}$ with  cardinality  $\# \mathcal I  = \ell$ such that 
%\begin{equation} \label{lim-curve}
$$	\begin{cases}   \gamma_{n_j,k} \rightrightarrows \gamma_k, &\text{ if }  k\in \mathcal I; \\
	 \gamma_{n_j, k}\not \rightrightarrows \gamma_k, & \text{ if }  k\notin \mathcal I.
\end{cases} $$
\end{thm}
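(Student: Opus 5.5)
The plan is to reduce all three statements to a common combinatorial invariant: a correspondence between the $\nu$ nearby non-attracting (say) fixed points $\zeta_{n,1},\dots,\zeta_{n,\nu}$ and the basins $A_1,\dots,A_\nu$, read off from the gate structure of the perturbation. We pass to subsequences freely. After a subsequence, the rescaled perturbation $f_n(z)-z$ near $\zeta$ converges, after normalization, to a polynomial vector field of degree $\nu+1$. We then follow Oudkerk's algorithm: the edge dynamics on the tree of gates tells us, for each $k$, through which gates an orbit starting on $\gamma_{n,k}$ (deep in $A_k$) passes. The $*$-hypothesis says there is exactly one attracting fixed point $\alpha_n$, and the other $\nu$ points are non-attracting. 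So every orbit entering the $k$-th petal either lands at $\alpha_n$ or gets trapped near one of the $\zeta_{n,j}$. I would first prove a matching lemma: after a subsequence there is a bijection $\sigma$ between $\{1,\dots,\nu\}$ and the non-attracting points such that the orbit of $\gamma_{n,k}(t)$, $t\to\infty$, accumulates on $\zeta_{n,\sigma(k)}$ or passes through its gate. This assigns to each basin $A_k$ a multiplier $\lambda_{n,\sigma(k)}$.

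The core local statement then concerns the pair $(A_k,\zeta_{n,\sigma(k)})$. \emph{Claim:} $\gamma_{n,k}\rightrightarrows\gamma_k$ iff $|{\rm Re}(1/(1-\lambda_{n,\sigma(k)}))|\to\infty$, and in that case $J(f_n)$ stays away from compact pieces of $A_k$. Conversely, if ${\rm Re}(1/(1-\lambda_{n,\sigma(k)}))$ is bounded, some limit of $J(f_n)$ meets $A_k$ and $\gamma_{n,k}\not\rightrightarrows\gamma_k$.

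For the forward direction, use the Fatou coordinate near the almost-parabolic point. In the local coordinate $w=1/(1-\lambda)\cdot\log$-type variable, $f_n$ is nearly translation by $1$ on a cylinder. The condition ${\rm Re}(1/(1-\lambda))\to\infty$ means the orbit slides toward $\zeta_{n,\sigma(k)}$ without making a nonzero-phase transit through the gate. So $\gamma_{n,k}(t)$ stays uniformly close to $\gamma_k(t)$ for all large $t$, which gives uniform convergence; this is the standard horocyclic argument of McMullen. The repelling or indifferent side is symmetric. For the converse, bounded real part gives, after a subsequence, a limit phase $\sigma\in\mathbb C$ and a Lavaurs map $g_\sigma$ with $f_n^{m_n}\to g_\sigma$ on $A_k$. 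Then $\gamma_{n,k}$ has points that exit through the gate and converge to points of $g_\sigma$-orbits off $\overline{\gamma_k}$, so uniform convergence fails. Moreover, preimages of repelling points of $f_n$ accumulate on $g_\sigma^{-1}(J(f_0))\cap A_k\neq\emptyset$. This gives $X\cap A_k\neq\emptyset$ for every limit $X$, using lower semicontinuity of Julia sets in the form $J$ of the limit map $\subset\liminf$.

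With the claim in hand, set $\mathcal I=\sigma^{-1}(\{1,\dots,\ell\})$. Then (1)$\Rightarrow$(2),(3) follow immediately. For (2)$\Rightarrow$(1) and (3)$\Rightarrow$(1), take a subsequence realizing some $\ell'$-horocyclic convergence. The dichotomy then forces the set of basins with the property to have cardinality $\ell'$, so $\ell'=\ell$. Since this holds along every subsequence, (1) follows.

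I expect the main obstacle to be the matching lemma: showing the map $k\mapsto\sigma(k)$ is a genuine bijection, and that the behavior in $A_k$ depends only on that one multiplier. In the balanced case, orbits may pass several gates, which is why the statement fails there. I would try to prove it through the tree edge dynamics: with a single attracting point the tree is a star, so each petal edge leads through exactly one gate. Theorem \ref{limit-A} serves as a consistency check for the case $\ell\ge1$.
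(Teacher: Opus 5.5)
\textbf{The matching lemma is false, so the counting step is unsupported.} Your argument rests on the matching lemma: the claim that the fate of $A_k$ is decided by a single multiplier $\lambda_{n,\sigma(k)}$. This fails in general, and so does the reason you give for it.

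The gate tree $T$ need not be a star centred at the attracting vertex $v_0$. Every edge of $T$ is a double petal whose boundary trajectories run from a source to a sink of the vector field \eqref{vfield}. Hence $T$ is bipartite for the sink/source type. That type is governed by the sign of ${\rm Im}(e^{-i\phi}\iota_v)$, where $\iota_v=\iota(f_n,v(f_n))$. Being $f_n$-attracting is a separate condition, ${\rm Re}\,\iota_v>1/2$.

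Already for $\nu=2$, $T$ is a path, and $v_0$ can be an endpoint. For example, take $\phi=0$ and $\iota_{v_0}=K(1+i)$, $\iota_{v_1}=-K-2iK$, $\iota_{v_2}=iK$ with $K\to\infty$. Then $v_0,v_2$ are sinks and $v_1$ is a source, so $T$ is the path $v_0-v_1-v_2$.

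Let $k_1=\boldsymbol k([v_0,v_1])$ and $k_2=\boldsymbol k([v_1,v_2])$. By Proposition \ref{phase}:
\begin{itemize}
\item ${\rm Im}\,\tau_{k_1}=\pm2\pi{\rm Re}(\iota_{v_1}+\iota_{v_2})+O(1)$;
\item ${\rm Im}\,\tau_{k_2}=\pm2\pi{\rm Re}\,\iota_{v_2}+O(1)$;
\item ${\rm Im}(\tau_{k_2}+\tau_{k_1})=\pm2\pi{\rm Re}\,\iota_{v_1}+O(1)$, since these are the two gates around $v_1$.
\end{itemize}
Run Oudkerk's algorithm in the $1$-horocyclic case.
\begin{itemize}
\item If $v_1$ is horocyclic and $v_2$ bounded: the algorithm for $k_1$ is infinite, and for $k_2$ it stops at $r=1$.
\item If $v_2$ is horocyclic and $v_1$ bounded: the algorithm for $k_1$ is still infinite, and for $k_2$ it stops at $r=2$.
\end{itemize}
So $A_{k_1}$ is the good basin and $A_{k_2}$ the bad one, whichever multiplier is horocyclic. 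No bijection $\sigma$ can satisfy your Claim, since $\sigma(k_1)$ would have to be both $v_1$ and $v_2$.

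\textbf{The mechanism is not local either.} An orbit from a good basin does not slide toward a non-attracting point. For those points, horocyclic means ${\rm Re}\,\iota\to-\infty$, not $+\infty$. Instead the orbit crosses a chain of gates dictated by Oudkerk's algorithm and converges to $v_0(f_n)$ (Proposition \ref{ak-limit2}). It gets through if and only if every partial sum ${\rm Im}\sum_{r\le j}\tau_{a_r}(f_n)$ diverges. Each such sum is $\pm2\pi{\rm Re}\,\iota$ of a \emph{set} of vertices determined by the tree, not of one vertex. A McMullen-type Fatou-coordinate estimate at a single fixed point cannot see this.

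\textbf{What the counting step needs.} Your converse step (``the set of good basins has cardinality $\ell'$'') requires the counting statement of Proposition \ref{find-e}. Let $V_*$ be the set of horocyclic non-attracting vertices, with the others bounded. A marking procedure on $T$ produces $E_*\subset E$ with $\#E_*=\#V_*$, such that the algorithm is infinite exactly for the gates $\boldsymbol k(e)$, $e\in E_*$. Which gates are marked depends on $V_*$ as a whole. Given this, Propositions \ref{ak-limit1}, \ref{ak-limit2} and \ref{lic} yield (1)$\Rightarrow$(2),(3) with $\mathcal I=\boldsymbol k(E_*)$. Your subsequence argument for the converses then goes through exactly as in the paper.

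\textbf{Minor point.} Normalizing $f_n-{\rm id}$ to a limiting polynomial vector field can degenerate when fixed points split at different rates. The well-behaved framework and the selection principle are what avoid this.
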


%To Theorem \ref{pb-limit} (2)$\Longrightarrow$(1), we actually prove  \eqref{lim-basin}$\Longrightarrow$(1) and  \eqref{lim-curve}$\Longrightarrow$(1). 

We remark that the subsequence $(f_{n_j})_j$  and the index set $\mathcal I$ in  part (2) and part (3) of Theorem \ref{pb-limit}   can be the same.  
Applying Theorem \ref{pb-limit} to the case $\ell=\nu$, we have 

\begin{thm} \label{pb-limit1} 	 Let $f_0$ have a parabolic fixed point $\zeta$ of multiplicity $\nu+1$, $f'_0(\zeta)=1$.   Let $f_n\rightarrow f_0$ be  a $*$-sequence of $f_0$ at $\zeta$.   Then the following three statements are equivalent:
	
	(1).  $f_n\rightarrow f_0$ horocyclically at $\zeta$.
	% and let $1\leq m\leq \nu$.  Assume $m$  attracting (or repelling) multipliers sequences  converge to $1$  horocyclically,
	
	(2).   For any $1\leq k\leq \nu$, we have 
	$X\cap A_k=\emptyset$, $\forall X\in \mathcal{L}((J(f_{n}))_n)$.
	%$$\limsup_{j\rightarrow \infty} J(f_{n_j})\cap A_k\neq \emptyset, \  \forall k\notin E.$$ 
	
	(3).    For any $1\leq k\leq \nu$,  we have $\gamma_{n,k} \rightrightarrows \gamma_k$.
	%	    $$\limsup_{j\rightarrow \infty} J(f_{n_j})\cap A_k=\emptyset, \  \forall k\in E,$$ 
	%    $$\limsup_{j\rightarrow \infty} J(f_{n_j})\cap A_k\neq \emptyset, \  \forall k\notin E.$$ 
	%  $\ell$ immediate parabolic basins $A_1, \cdots, A_\ell$ of $\zeta$, such that 
	%	$$\limsup_{k\rightarrow \infty} J(f_{n_k})\cap (A_1\cup \cdots\cup A_\ell)=\emptyset.$$
\end{thm}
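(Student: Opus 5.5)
The plan is to deduce Theorem \ref{pb-limit1} from Theorem \ref{pb-limit} with $\ell=\nu$, passing between statements about the full sequence and statements about subsequences. With $\ell=\nu$ the only subset $\mathcal I\subset\{1,\dots,\nu\}$ with $\#\mathcal I=\nu$ is $\{1,\dots,\nu\}$. So the ``$k\notin\mathcal I$'' clauses in parts (2) and (3) of Theorem \ref{pb-limit} are vacuous. Also, by definition, $\nu$-horocyclic convergence at $\zeta$ says exactly that all $\nu$ non-attracting (or non-repelling) multipliers converge to $1$ horocyclically.

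\textbf{Step 1: reconcile the two notions of horocyclic convergence.} For a $*$-sequence, the remaining fixed point near $\zeta$ is the single attracting (resp.\ repelling) one, say $\zeta_{n,0}$. I claim that $f_n\rightarrow f_0$ horocyclically at $\zeta$ in the sense of the introduction is equivalent to $\nu$-horocyclic convergence. I read the introduction's definition as requiring that every fixed point of $f_n$ near $\zeta$ has multiplier converging horocyclically; this is the reading under which Theorem \ref{h-equiv} is consistent. Under that reading, horocyclic convergence gives $\nu$-horocyclic convergence at once.

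For the converse, use the holomorphic fixed point index relation near $\zeta$:
\[
\sum_{j=0}^{\nu}\frac{1}{1-\lambda_{n,j}}\;\longrightarrow\;\operatorname{ind}(f_0,\zeta),
\]
which is a finite number. Take real parts. If all $\nu$ non-attracting points have $\mathrm{Re}\big(1/(1-\lambda_{n,j})\big)\rightarrow\pm\infty$, then all these terms have the same sign. This is because a non-repelling multiplier gives $\mathrm{Re}\big(1/(1-\lambda)\big)\ge 1/2$, and a non-attracting one gives $\le 1/2$, with near-unit multipliers pushing the value to $\pm\infty$. Hence $\mathrm{Re}\big(1/(1-\lambda_{n,0})\big)$ is forced to $\mp\infty$ as well, so $\lambda_{n,0}$ also converges horocyclically. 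I expect this to be the only genuinely non-formal step. The point to check is that the sign constraint prevents cancellation among the $\nu$ divergent terms.

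\textbf{Step 2: (1)$\Leftrightarrow$(2) and (1)$\Leftrightarrow$(3).} Assume (1), so by Step 1 the convergence is $\nu$-horocyclic. Suppose (2) fails. Then some $X\in\mathcal L((J(f_n))_n)$ meets some $A_k$, witnessed by a subsequence $J(f_{m_i})\rightarrow X$. By Theorem \ref{pb-limit}, (1)$\Rightarrow$(2), this subsequence has a further subsequence with $Y\cap A_k=\emptyset$ for every limit $Y$ along it. But $X$ is such a limit, a contradiction.

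Conversely, assume (2). Every subsequence then trivially satisfies the condition of Theorem \ref{pb-limit}(2) with $\mathcal I=\{1,\dots,\nu\}$, because limits along subsequences are among the limits of the full sequence. Hence the sequence is $\nu$-horocyclic, and by Step 1 it is horocyclic.

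The argument for (3) is the same. Uniform convergence $\gamma_{n,k}\rightrightarrows\gamma_k$ of the full sequence holds if and only if every subsequence has a further subsequence converging uniformly to the same limit $\gamma_k$. This is the standard subsequence principle applied to the sup-distance $\sup_{t\ge0}|\gamma_{n,k}(t)-\gamma_k(t)|$, measured in the spherical metric. Combined with Theorem \ref{pb-limit}, (1)$\Leftrightarrow$(3), for $\ell=\nu$, this gives the equivalence with (1).
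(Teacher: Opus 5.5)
Your proposal is correct and follows the paper's route. Both reduce to Theorem \ref{pb-limit} with $\ell=\nu$, where $\mathcal I=\{1,\dots,\nu\}$ is forced, and then pass between statements about the full sequence and statements about subsequences. Your Step 1 is a useful addition: it uses Lemma \ref{asm-iota} and the half-plane sign constraint to show that the distinguished attracting (or repelling) point is automatically horocyclic, which justifies the equivalence ``horocyclic at $\zeta$ $\Leftrightarrow$ $\nu$-horocyclic'' that the paper only asserts.
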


Theorem \ref{pb-limit1} (1)$\Longrightarrow$(3) generalizes the Tameness Theorem of Petersen and Zakeri \cite[Theorem B]{PZ24b}.  Note that in Petersen and Zakeri's work, the $f_0$-invariant curve $\gamma_k$  is required to converge to  $\zeta$ in a repelling direction, while in our work,  $\gamma_k$  converges to  $\zeta$ in an attracting direction.
This adjustment does not fundamentally change the conclusion, since the statements (1) and (3) and their supporting arguments apply equally to the case of holomorphic germs, 
 %This makes  no essential difference, because statements (1) and (3) and their arguments apply equally well to the  holomorphic germs,  
 and  the  repelling directions correspond to the attracting directions for the  inverse germ $f_0^{-1}$ at $\zeta$.   %and one may replace  $(f_n)_n$  by the sequence of the inverse germs $(f_n^{-1})_n$.

It is worth noting that horocyclic convergence is closely related to the Hausdorff convergence of external rays for polynomials. To illustrate this connection,
let  $\mathcal P_d$  denote the space  of monic and centered polynomials of degree $d\geq 2$.  The connected locus $\mathcal C_d$  consists of those $f\in \mathcal P_d$ whose Julia set $J(f)$ 
is connected. For each $f\in \mathcal C_d$, the external ray with angle $\theta \in \mathbb R/\mathbb Z$ is denoted by $R_{f}(\theta)$. 
 
Let $f_0\in \mathcal C_d$ and let  $\zeta\in J(f_0)$ be a  parabolic or repelling periodic point of $f_0$. 
It is known   \cite[\S 18]{Mil06}  that $\zeta$ is the landing point of finitely many external rays.   Let $\Theta$ be the set of angles $\theta \in \mathbb R/\mathbb Z$  for which  the external ray $R_{f_0}(\theta)$ lands at $\zeta$. If $\zeta$ is a $f_0$-repelling point, a classical result  \cite[Prop. 8.5]{DH} states that for any $\theta\in \Theta$ and any sequence $f_n\rightarrow f_0$   in  $\mathcal C_d$, we have  $\overline{R_{f_{n}}(\theta)}  \to  \overline{R_{f_0}(\theta)}$  in Hausdorff topology.
However, when $\zeta$ is a $f_0$-parabolic point, the situation becomes more subtle. Recently,  Petersen and Zakeri \cite{PZ24} show that any Hausdorff limit $ L\in \mathcal{L}((\overline{R_{f_{n}}(\theta)})_n)$   has the trichotomy:

\begin{itemize}
	\item The tame case: $L=\overline{R_{f_0}(\theta)}$.
	
	\item  The semi-wild case:  $L\supsetneq \overline{R_{f_0}(\theta)}$,  and  $L$ is the union of $\overline{R_{f_0}(\theta)}$ and at most countably many homoclinic arcs.
	
	\item  The wild case:  $L\supsetneq \overline{R_{f_0}(\theta)}$, and $L$ contains at least one and at most finitely many      heteroclinic arcs.
	\end{itemize}

%It is   natural to ask the insentric reason for the tame case. We show that this case is essentially caused by the horocyclic convergence:   

%It is natural to ask for the dynamical reason for the tame case.
The dynamical reason for the tame case is a natural question.
 We show that it is closely related  to the horocyclic convergence.

\begin{thm} \label{ext-ray0} Let $f_n, f_0\in \mathcal C_d$.  Let $\zeta$ be a  $f_0$-parabolic periodic  point, and assume  $(f_n)_n$ is a generic perturbation of $f_0$ at $\zeta$.     If   $\overline{R_{f_{n}}(\theta)}\rightarrow \overline{R_{f_0}(\theta)}$ in Hausdorff topology for all $\theta\in \Theta$, then  $f_n\rightarrow f_0$  horocyclically at $\zeta$.

	If  $(f_n)_n$ is a $*$-sequence of $f_0$ at $\zeta$,  then  $f_n\rightarrow f_0$  horocyclically at $\zeta$ if and only if $\overline{R_{f_{n}}(\theta)}\rightarrow \overline{R_{f_0}(\theta)}$ in Hausdorff topology for all $\theta\in \Theta$.
	\end{thm}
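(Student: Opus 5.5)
The plan is to reduce Theorem \ref{ext-ray0} to the invariant-curve statements of Theorems \ref{limit-A} and \ref{pb-limit1}, with external rays playing the role of the curves $\gamma_{n,k}$. First replace $f_0$ by the iterate $f_0^{l}$, where $l$ is the minimal integer with $(f_0^l)'(\zeta)=1$. Every ray $R_{f_0}(\theta)$ with $\theta\in\Theta$ is then fixed by $f_0^{lp}$ for a suitable $p$. Passing to a further iterate, we may assume each such ray is invariant and lands at $\zeta$ along a \emph{repelling} direction. The rays landing at $\zeta$ separate the immediate basins, so each repelling direction between consecutive basins $A_k,A_{k+1}$ carries some ray. (Multiple rays are allowed when $\zeta$ has rotation number, but after iterating, each repelling petal contains at least one ray of $\Theta$.)

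Next, parametrize each ray as an invariant curve. Write $\beta_\theta(t)=R_{f_0}(\theta)$ at potential $e^{-t}$, rescaled so that $f_0(\beta_\theta(t))=\beta_\theta(t-1)$ for $t\in(-\infty,0]$. Do the same for $f_n$; since $f_n\to f_0$ in $\mathcal C_d$, the B\"ottcher coordinates converge and $\beta^n_\theta\to\beta_\theta$ locally uniformly. Hausdorff convergence $\overline{R_{f_n}(\theta)}\to\overline{R_{f_0}(\theta)}$ is then equivalent to uniform convergence $\beta^n_\theta\rightrightarrows\beta_\theta$ on $(-\infty,0]$. Away from $\zeta$ there is no problem, so what must be controlled is the tail near $\zeta$.

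We then pass to the inverse germ, as in the remark after Theorem \ref{pb-limit1}. Near $\zeta$ the germs $g_n=f_n^{-1}$ converge to $g_0=f_0^{-1}$, whose attracting directions are the repelling directions of $f_0$. The tail of $\beta_\theta$ becomes a $g_0$-invariant curve converging to $\zeta$ in an attracting direction. The multipliers of $g_n$ at $\zeta_n$ are $\lambda_n^{-1}$, and since $\mathrm{Re}\,1/(1-\lambda^{-1})=1-\mathrm{Re}\,1/(1-\lambda)$, horocyclic convergence is preserved. A $*$-sequence of $f_0$ is likewise a $*$-sequence of $g_0$, because attracting and repelling roles swap.

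As the paper notes, statements (1) and (3) of Theorems \ref{limit-A} and \ref{pb-limit1} and their proofs apply to holomorphic germs, so we may apply them to $g_n\to g_0$ with curves $\gamma_{n,k}=\beta^n_\theta$.

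For the first claim, assume the rays converge for all $\theta\in\Theta$. Then $\gamma_{n,k}\rightrightarrows\gamma_k$ for every $k$ along every subsequence, so (3)$\Rightarrow$(1) of Theorem \ref{limit-A} gives fixed points $\zeta_n$ whose multipliers converge to $1$ horocyclically, which is the claim.

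For the $*$-sequence case, the direction ``rays converge $\Rightarrow$ horocyclic'' uses (3)$\Rightarrow$(1) of Theorem \ref{pb-limit1}. The converse uses (1)$\Rightarrow$(3): it yields uniform convergence of the ray tails in every repelling direction, which gives Hausdorff convergence of every ray of $\Theta$. If several rays share a direction, the argument applies to each separately.

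The main obstacle is justifying that the germ versions of Theorems \ref{limit-A}(3) and \ref{pb-limit1}(3) apply to rays, whose curves are not arbitrary. The theorems need the curve $\gamma_{n,k}$ to be defined near $\zeta$ for $g_n$ and to converge locally uniformly to $\gamma_k$. For rays, the B\"ottcher parametrization fixes the curve canonically, and we must check it agrees with the one used in the germ setting. I would handle this by applying $g_n$ iteratively to a fixed compact fundamental arc of the ray, which is exactly how $\gamma_{n,k}$ is built from local data. A second point to check is that, in the generic case, the landing of $R_{f_n}(\theta)$ near $\zeta$ does not affect the argument, since only the uniform behavior of the invariant curve matters.
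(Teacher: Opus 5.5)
The $*$-sequence statement is handled correctly, and essentially by the paper's route. The paper quotes Theorem~\ref{ext-ray} with $\ell=\nu$, which is Theorem~\ref{pb-limit} applied to the inverse germs; you apply its special case, Theorem~\ref{pb-limit1}, to the inverse germs directly.

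The gap is in the first claim. There ``$f_n\to f_0$ horocyclically at $\zeta$'' means that \emph{every} fixed point of $f_n^l$ near $\zeta$ has multiplier tending to $1$ horocyclically, i.e.\ $|{\rm Re}\,\iota(f_n,v(f_n))|\to+\infty$ for all $v\in V$. This is how the paper uses the notion: see the proof of Theorem~\ref{hdc-horo}, and Theorem~\ref{pb-limit1}, where it is identified with $\nu$-horocyclic convergence. It is also the only reading under which the second claim can hold. For a $1$-horocyclic $*$-sequence with $\nu\ge 2$, Lemma~\ref{asm-iota} forces the attracting point $v_0(f_n)$ to be horocyclic, yet by Theorem~\ref{ext-ray} only one ray of $\Theta_\nu$ converges along subsequences.

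Theorem~\ref{limit-A}, (3)$\Rightarrow$(1), yields only \emph{one} sequence $\zeta_n$ with horocyclic multipliers, because its hypothesis (3) concerns a single basin. Feeding it the convergence of all $\nu$ ray tails discards exactly the information you need. You have implicitly identified Theorem~\ref{limit-A}(1) with horocyclic convergence at $\zeta$, while in the second part you use the strong meaning.

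The missing step is the combinatorial argument of Theorem~\ref{hdc-horo}, which is what the paper uses. Pass to a subsequence on which the local inverse germs $g_n$ lie in some $\mathcal{WB}_\phi(\mathbf G)$. For each $k\in\mathbb Z_\nu$, Oudkerk's algorithm for $k$ (for $g_n$) cannot terminate. Otherwise, by Proposition~\ref{lic}(2) and its proof, the tails of $R_{f_n}(\theta_k)$ would contain arcs $\gamma_{n,k}[N_n,N_n+1]$ converging to the nondegenerate arc $g_\sigma(\gamma_k[0,1])$. That arc lies in a parabolic basin of $f_0$, hence off $\overline{R_{f_0}(\theta_k)}$, contradicting Hausdorff convergence. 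This dichotomy is also what justifies your unproved assertion that Hausdorff convergence of the rays amounts to uniform convergence of the tails.

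Non-termination of the algorithm for \emph{all} $k$ is precisely the input from which the proof of Theorem~\ref{hdc-horo} derives $|{\rm Re}\,\iota(g_n,v(g_n))|\to+\infty$ for every $v\in V$. It does so via \eqref{h-limit1}, Lemmas~\ref{two-sequences} and~\ref{se-h-rep}, and the case analysis around each vertex. Since $\iota(g_n,\sigma)=1-\iota(f_n,\sigma)$, the same holds for $f_n$. A sub-subsequence argument then gives the conclusion for the whole sequence.
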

 
 Theorem \ref{ext-ray0} establishes an equivalence between the horocylic  convergence and the Hausdorff convergence of external rays, for $*$-sequences. It has  a more precise form for partial horocyclic convergence. To state such a result, assume
$f_0^{l}(\zeta)=\zeta$, $(f_0^{l})'(\zeta)=1$, and the multiplicity of $f_0^{l}$ at $\zeta$ is $\nu+1$.
%$f_0^{l}(\zeta)=$ there are $\nu$ immediate parabolic basins associated with $\zeta$.   
It is known   \cite[Theorem 18.13]{Mil06} that  $\# \Theta\geq \nu$. 
Let $\Theta_\nu \subset \Theta$ consist  of $\nu$ angles with the   property: for each $1 \leq k \leq \nu$, there exists   $\theta \in \Theta_\nu$ such that the external ray $R_{f_0}(\theta)$ lands at $\zeta$ in the $k$-th repelling direction.

%Let $\Theta_\nu \subset \Theta$ consist of $\nu$ angles so that for each $1\leq k\leq \nu$, there is $\theta\in \Theta_\nu $ so that   the external ray $R_f(\theta)$   lands at $\zeta$ in the $k$-the repelling direction.

%For $*$-sequences, the   horocyclic convergence  is closely related to the convergence of external rays for polynomials.

\begin{thm} \label{ext-ray} Let $f_0\in \mathcal C_d$ have a parabolic period point $\zeta$ of multiplicity $\nu+1$. 
	% Let $\Theta$ be the set of angles $\theta$  for which  the external ray $R_f(\theta)$ lands at $\zeta$ \footnote{Note that for each repelling direction, there is an angle $\theta\in \Theta$ such that the external ray $R_f(f)$ converges to $\zeta$ in this direction.}.   
Let  $(f_n)_n$ be a $*$-sequence  of $f_0$ at $\zeta$, and let $\ell\in [0, \nu]$ be an integer.   Then the following two statements
	  are equivalent:
	
	(1).  $f_n\rightarrow f_0$ $\ell$-horocyclically at $\zeta$.
	
	(2).   For every subsequence of $(f_n)_n$, there exist a further subsequence $(f_{n_j})_j$ and a subset  $\Theta_\ell \subset \Theta_\nu$ with $\# \Theta_\ell = \ell$ such that 
	%\begin{equation} \label{lim-curve}
	$$	\begin{cases}  \overline{R_{f_{n_j}}(\theta)}\rightarrow \overline{R_{f_0}(\theta)} \text{  in Hausdorff topology}, &\text{ if }  \theta\in \Theta_\ell; \\
 %\overline{R_{f_0}(\theta)} \subsetneq \liminf_n   \overline{R_{f_{n_j}}(\theta)},	
  \overline{R_{f_{n_j}}(\theta)} \not\to  \overline{R_{f_0}(\theta)} \text{  in Hausdorff topology}, 
 & \text{ if }   \theta\in\Theta_{\nu}\setminus \Theta_\ell.
	\end{cases} $$

\end{thm}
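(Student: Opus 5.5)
The plan is to reduce Theorem \ref{ext-ray} to Theorem \ref{pb-limit}, part (3), applied to the inverse germs. Replace $f_0$ by $f_0^l$ and $f_n$ by $f_n^l$. Then $\zeta$ is a parabolic fixed point with multiplier $1$ and multiplicity $\nu+1$, and the external rays $R_{f_0}(\theta)$, $\theta\in\Theta_\nu$, are $f_0^l$-invariant. They land at $\zeta$ in the $\nu$ repelling directions, one ray per direction.

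Work near $\zeta$ with the local inverse branches $g_0=f_0^{-l}$ and $g_n=f_n^{-l}$ fixing $\zeta$ and its perturbed fixed points. The repelling directions of $f_0$ are the attracting directions of $g_0$, so a tail of $R_{f_0}(\theta)$ becomes a $g_0$-invariant curve $\gamma_k$ tending to $\zeta$ in the $k$-th attracting direction. The fixed-point picture transforms as follows.
\begin{itemize}
\item Multipliers invert, $\lambda\mapsto 1/\lambda$, and ${\rm Re}\big(1/(1-\lambda^{-1})\big)=1-{\rm Re}\big(1/(1-\lambda)\big)$.
\item Hence $\ell$-horocyclic convergence is preserved, including the bounded part for the other $\nu-\ell$ multipliers.
\item Attracting and repelling points swap, so a $*$-sequence stays a $*$-sequence.
\end{itemize}
As the paper notes after Theorem \ref{pb-limit1}, statements (1) and (3) of Theorem \ref{pb-limit} and their arguments hold for holomorphic germs. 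So I would invoke (1)$\Leftrightarrow$(3) for the germs $g_n\to g_0$.

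Next choose the invariant curves. Fix a point $w_\theta\in R_{f_0}(\theta)$ close to $\zeta$ and use the B\"ottcher parametrization. On compact pieces of rays away from the Julia set, the ray $R_{f_n}(\theta)$ converges to $R_{f_0}(\theta)$ locally uniformly, since B\"ottcher coordinates depend continuously on $f\in\mathcal C_d$ outside the filled Julia set. Parametrize the tail of $R_{f_n}(\theta)$ starting at the corresponding point $w_{n,\theta}$ by pulling back under $g_n$ (equivalently, by potential). This gives a $g_n$-invariant curve $\gamma_{n,k}$ converging locally uniformly to $\gamma_k$. It satisfies the standing hypothesis of Theorem \ref{pb-limit} for $g_n$, and it does not matter that $R_{f_n}(\theta)$ may not land at a perturbed fixed point.

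Then relate Hausdorff convergence of closed rays to uniform convergence of these tails.
\begin{itemize}
\item \textbf{Splitting.} Write $\overline{R_{f_n}(\theta)}$ as the union of the initial segment from $\infty$ to $w_{n,\theta}$, the tail $\gamma_{n,k}$, and the landing point. The initial segment converges uniformly by the B\"ottcher argument, including near $\infty$.
\item \textbf{Uniform implies Hausdorff.} If $\gamma_{n,k}\rightrightarrows\gamma_k$, then the closed rays converge in the Hausdorff topology.
\item \textbf{Failure of uniform convergence.} If $\gamma_{n,k}\not\rightrightarrows\gamma_k$, then along the subsequence there are times $t_n\to\infty$ with $\gamma_{n,k}(t_n)$ at distance at least $\epsilon$ from $\gamma_k([T,\infty))$ for large $T$. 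The points $\gamma_{n,k}(t)$ for $t\le t_n$ near the large times must stay near $\zeta$. So an accumulation point lies off $\overline{\gamma_k}$, or off $\overline{R_{f_0}(\theta)}$ after checking that it cannot lie on the initial segment. That initial segment sits at a definite potential while the tail has potential tending to $0$.
\end{itemize}
Since potential decreases along the tail, the limit points of a non-uniform tail have potential $0$ and lie in $K(f_0)$. The only points of $\overline{R_{f_0}(\theta)}$ with potential $0$ are in the landing point set $\{\zeta\}$, and I would check that the escape stays $\epsilon$ away from $\zeta$. This gives non-convergence, and in fact the limit strictly contains $\overline{R_{f_0}(\theta)}$, consistent with the Petersen--Zakeri trichotomy. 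Therefore Hausdorff convergence of $\overline{R_{f_n}(\theta)}$ is equivalent to $\gamma_{n,k}\rightrightarrows\gamma_k$ along any subsequence, and the dichotomy ``$\rightrightarrows$ or $\not\rightrightarrows$'' in Theorem \ref{pb-limit} (3) matches the dichotomy in (2) here.

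With this dictionary, $\mathcal I$ corresponds to $\Theta_\ell$ through the labelling $\theta\leftrightarrow k$, and (1)$\Leftrightarrow$(2) follows directly from Theorem \ref{pb-limit} (1)$\Leftrightarrow$(3) for $g_n$. I expect the main obstacle to be the third bullet above, namely showing that non-uniform convergence of the tails really produces Hausdorff non-convergence. This needs a lower bound that keeps the escaping points uniformly away from $\zeta$, and I would try to obtain it from the Fatou-coordinate description of $\not\rightrightarrows$ in the proof of Theorem \ref{pb-limit}. A second technical point is justifying that the germ version of Theorem \ref{pb-limit} applies to $g_n$ defined only near $\zeta$.
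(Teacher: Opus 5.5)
Your proposal follows the paper's own proof. You pass to the inverse germs $f_n^{-l}\to f_0^{-l}$ near $\zeta$, using that $\ell$-horocyclic convergence is invariant under $\lambda\mapsto 1/\lambda$. You take the tails of $R_{f_n}(\theta)$ and $R_{f_0}(\theta)$ as the invariant curves, and invoke Theorem \ref{pb-limit} (1)$\Leftrightarrow$(3).

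The step you flag as the main obstacle is already settled by your own potential argument. Since $t_n\to\infty$ forces $\gamma_k(t_n)\to\zeta$, the points $\gamma_{n,k}(t_n)$ stay roughly $\epsilon$ away from $\zeta$. Their limits then lie in $K(f_0)$ but differ from the landing point $\zeta$, so they are off $\overline{R_{f_0}(\theta)}$. The paper instead gets this from Proposition \ref{lic}(2) via the Lavaurs map.
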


%We remark that the set $\Theta_\ell$ in (2) depends on the choice of the subsequence $(f_{n_j})_j$.

%Theorem \ref{ext-ray} is a supplement to  a classical result \cite[Proposition 8.5]{DH} on the continuity of external rays when the landing point is repelling.  
%Theorem  \ref{ext-ray} supplement a classical result on continuity of external rays when the landing points are repelling . 
%For generic perturbations, 
%Theorem  \ref{ext-ray} has a weaker form, see Remark \ref{ext-generic}.
% and \ref{ext-generic2}.
%In the case that  $\overline{R_{f_{n}}(\theta)} \not\to \overline{R_{f_0}(\theta)}$,
%   in Hausdorff topology, 
%all possible Hausdorff limits of $\overline{R_{f_{n}}(\theta)}$ are studied by  Petersen and Zakeri \cite{PZ24}. 

% \vspace{5pt}
%\noindent\textbf{Origanization.}  

 \vspace{5pt}
\noindent\textbf{Idea of the proof.}  
%The ideas behind the proofs of these results  rest on the implosion theory developed by Oudkerk.
The proofs of these results are grounded in the implosion theory developed by Oudkerk.
 Under a generic perturbation $(f_n)_n$,    the $f_0$-parabolic point  of $\zeta$ splits into finitely many  nearby non-parabolic periodic points of $f_n$,    and finitely many possible {\it gate structures} (introduced by Oudkerk \cite{Ou99}, see \S \ref{para-imp}) arise from    such a perturbation. These gate structures give  qualitative descriptions of the `egg-beater dynamics' for $f_n$'s.    By passing to a subsequence, we may assume  $(f_n)_n$ have the same  gate structure $\mathbf G$.  
%The  gate structure induces a   combinatorial tree $T$.
%In this way, the maps  $(f_n)_n$ have  the same combinatorial tree     induced by the
Each gate between the fixed points of $f_n$ will have
an associated complex number called the {\it lifted phase}.  The lifted phase is related to the holomorphic indices for the  $f_n$-fixed points above (or below) the gate.  Because of this relation, the (partial) horocyclic convergence of multipliers come into play.  Roughly speaking, the horocyclic convergence of multipliers allows us to control the $f_n$-orbit of the points passing through the gate. Combining with an algorithm designed by Oudkerk, we are able to trace the whole $f_n$-orbit of an initial point chosen  in the parabolic basin of $f_0$,     hence getting the  (partial)  continuity properties for the Julia sets 
%(or more generally, Fatou components) 
and the invariant rays.

%  It turns out that we are able to   and leads to the 

 \vspace{5pt}
\noindent\textbf{Structure of the paper.}   
%The idea of the proof  The paper is organized as follows:
In \S  \ref{para-imp},  we review the parabolic   implosion theory developed by Oudkerk.
In \S \ref{h-e},    we introduce Oudkerk's algorithm \cite{Ou02}, which plays a  crucial role in studying the  near parabolic perturbations. Some properties are given therein.  
In \S  \ref{generic-p}, we shall study generic perturbations using Oudkerk's algorithm,  
%relate Oudkerk's algorithm to an induced  dynamical system  in a  combinatorial tree,
 and prove 
Theorems \ref{hdc-horo} and  \ref{limit-A}.  In \S \ref{generic-p}, we will focus on the   $*$-sequences. 
%relate the $\ell$-horocyclic convergence to the edge dynamics on the combinatorial tree induced by the gate structure for $∗$-sequences. From this relation, we extract the partial continuity for the sequence of Fatou components and invariant rays.
We relate the $\ell$-horocyclic convergence to the edge dynamics on the combinatorial tree induced by the gate structure. 
This relation yields the partial continuity of both  the Julia sets and the invariant rays.
%From this relation,   the partial continuity for 
 %Julia sets  and invariant rays are obtained.
%the sequence of Fatou components and invariant rays.
In \S \ref{proofs}, we prove the remaining theorems in \S \ref{int}.

 \vspace{5pt}
\noindent\textbf{Acknowledgments.}  The author would like to thank Jie Cao,  Carsten  Petersen and  Saeed Zakeri for  helpful discussions and  inspiring examples.   
 
The research is supported by National Key R\&D Program of China (Grant No. 2021YFA1003200),  National Natural Science Foundation of China (Grant No. 12131016), and  the Fundamental Research Funds for the Central Universities 2024FZZX02-01-01.

 \vspace{5pt}
\noindent\textbf{Notations.}  	Throughout the paper we adopt the following notations:

\begin{itemize}
	\item $\mathbb C$ and $\C$:  the complex plane and the Riemann sphere
	
	\item $\mathbb N$ and $\mathbb Z$: the set of natural numbers $1,2,\cdots$ and  the set of integers
	
	\item $\mathbb D(a,r)=\{z\in \mathbb C; |z-a|<r\}$, $\mathbb D=\mathbb D(0,1)$.
%	\item For  a sequence of  sets $(A_n)_n$ in $\C$,  its accumulation set
%	\begin{equation}\label{accu}
%	\mathcal{A}((A_n)_n)=\bigcap_{k=1}^\infty\overline{\bigcup_{n\geq k} A_n}.
%	\end{equation}
	%, \ \liminf_{n\rightarrow \infty}A_n=\bigcup_{k=1}^\infty\bigcap_{n=k}^\infty A_n.$$
\end{itemize}

\section{Parabolic implosion following Oudkerk} \label{para-imp}

%The parabolic implosion theory is a t 

%In this section, we shall review the parabolic implosion theory  developed by Oudkerk \cite{Ou99}, see also  \cite{Lav, D, Sh98}. 

In this section, we review the theory of parabolic implosion as developed by Oudkerk \cite{Ou99}, along with related works in \cite{Lav, D, Sh98}.
We collect some  results used in this paper, following the presentation of \cite{Ou02} whose proofs  can be found in \cite{Ou99}.

 For an open subset $W\subset \C$,   let $\mathcal H$ denote the family of  holomorphic maps $f: \mathcal D(f)\rightarrow \C$, where $\mathcal D(f)\subset W$ is open. We equip $\mathcal H$ with a topology as follows. We say $f_n\rightarrow f$ in the {\it compact-open topology} if we have the following: if $K\subset \mathcal D(f)$ is compact, then
$K\subset \mathcal D(f_n)$ for large $n$ and $f_n\rightarrow f$ uniformly on $K$.

Let $\mathcal F$ be a  family of holomorphic germs defined in a small neighborhood of $0$. Suppose  $f_0\in \mathcal F$ has a parabolic fixed point at $0$ with multiplier $1$ and multiplicity $\nu+1$. Up to a conformal change of coordinates,   %$f_0$ takes the form 
  $f_0(z)=z+z^{\nu+1}+{O}(z^{\nu+2})$.  We fix a small $r_0>0$ so that every $f\in \mathcal F$ is defined in a neighborhood of  $K_0=\{|z|\leq 2r_0\}$.
  By Rouch\'e's Theorem, if  $f\in \mathcal F$  is close to $f_0$, then it will have 
  $\nu+1$
  fixed points in $K_0$ counted with multiplicity. 
  
  Let $z_{k,-}:=r_0 e^{2\pi i (k-1)/\nu}$  and $z_{k,+}:=e^{\pi i /\nu} z_{k,-}$ for $k\in \mathbb Z_\nu=\mathbb Z/\nu \mathbb Z$. Clearly each $z_{k,-}$ lies on a  repelling direction of $f_0$ and  each $z_{k,+}$ lies on an  attracting direction of $f_0$.
  
  For $\phi\in(-\pi/4, \pi/4)$ and $f\in \mathcal F$, consider the vector field 
  \begin{equation}\label{vfield}
  \dot{z}= e^{i\phi} \cdot i (f(z)-z).
  \end{equation}

 Let $\gamma_{k, \pm, f, \phi}: I_{k, \pm, f, \phi}\rightarrow K_0$   solve \eqref{vfield} with 
$\gamma_{k, \pm, f, \phi}(0)=z_{k,\pm}$, where $ I_{k, \pm, f, \phi}\ni 0$ is the largest interval in $\mathbb R$.
Let $\ell_{k, \pm, f, \phi}=\gamma_{k, \pm, f, \phi}(I_{k, \pm, f, \phi})$.

For $f=f_0$, we have the following

\begin{pro} For each $k\in \mathbb Z_\nu$, $s\in \{\pm\}$ and  $\phi\in(-\pi/4, \pi/4)$, we have
	
	1.  $I_{k, s, f_0, \phi}=\mathbb R$ and $\gamma_{k, s, f_0, \phi}(\pm\infty)=0$. Also $\overline{\ell_{k, s, f_0, \phi}}$ bounds an open Jordan disk denoted by $U_{k, s, f_0, \phi}$.
	
	2. $f_0(U_{k, +, f_0, \phi})\subset U_{k, +, f_0, \phi}$ and $U_{k, -, f_0, \phi}
	\subset f_0(U_{k, -, f_0, \phi})$.
	
	3. There is a univalent map $\Phi_{k, s, f_0, \phi}: U_{k, s, f_0, \phi}\rightarrow \mathbb C$ satisfying 
	$$\Phi_{k, s, f_0, \phi}(f_0(z))=\Phi_{k, s, f_0, \phi}(z)+1,$$
	whenever $z, f_0(z)\in U_{k, s, f_0, \phi}$. This $\Phi_{k, s, f_0, \phi}$ is unique up to addition by a constant.
		\end{pro}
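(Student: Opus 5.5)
We will work entirely in the local model $f_0(z)=z+z^{\nu+1}+O(z^{\nu+2})$ on $K_0=\{|z|\le 2r_0\}$ and compare the vector field \eqref{vfield} with its leading part. Set $w=\nu^{-1}z^{-\nu}$, a Fatou-type coordinate branch on each sector of opening $2\pi/\nu$. The field $\dot z=e^{i\phi}i(z^{\nu+1}+O(z^{\nu+2}))$ becomes $\dot w=-e^{i\phi}i\,(1+O(|w|^{-1/\nu}))$. For the model field $\dot w=-ie^{i\phi}$, trajectories are straight lines in the direction $-ie^{i\phi}$. Since $|\phi|<\pi/4$, this direction is transverse to both the real and imaginary axes.

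\textbf{Step 1: behaviour of the trajectories.} First we show that $I_{k,s,f_0,\phi}=\mathbb R$ and that $\gamma(t)\to0$ as $t\to\pm\infty$. The point $z_{k,\pm}$ lies on a ray where $z^\nu$ is real. Hence $w(z_{k,\pm})=\pm\frac{1}{\nu r_0^\nu}$, which is large when $r_0$ is small. In the $w$-plane the trajectory is therefore a small perturbation of a straight line through a point of large modulus. Its direction $-ie^{i\phi}$ lies within $\pi/4$ of the vertical.

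\begin{itemize}
\item Forward time: $|w|\to\infty$ with $\arg w$ tending to $-\pi/2+\phi$.
\item Backward time: $|w|\to\infty$ with $\arg w$ tending to $\pi/2+\phi$.
\end{itemize}

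We get this by a standard Gronwall/argument-comparison estimate. The relative error $O(|w|^{-1/\nu})$ is small throughout, because $|w|\ge c\,r_0^{-\nu}$ along the whole line. The angular excursion of the line in the $w$-plane is strictly less than $2\pi$, so the curve stays in one branch sector. Mapping back, $z\to0$ in both time directions along the two directions adjacent to $z_{k,\pm}$, and the trajectory never leaves $K_0$ (it stays in $|z|\le r_0(1+o(1))$). The set $\overline{\ell}=\ell\cup\{0\}$ is then a Jordan curve. It is injective because a nonconstant real trajectory of a holomorphic field without zeros off $0$ is injective, and it is not periodic since $|w|\to\infty$ at both ends. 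By the Jordan curve theorem it bounds a disk $U_{k,s,f_0,\phi}$, namely the petal-shaped region containing the segment from $0$ to $z_{k,s}$.

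\textbf{Step 2: invariance.} Let $\ell=\ell_{k,+,f_0,\phi}$. For $z\in\ell$, the displacement $f_0(z)-z$ equals the field vector divided by $e^{i\phi}i$. It therefore points at angle $-\pi/2-\phi$ relative to the tangent, i.e.\ strictly to one side of $\ell$, with angle bounded away from $0$ and $\pi$ because $|\phi|<\pi/4$. Near $0$ we use the $w$-coordinate. There $f_0$ is $w\mapsto w-1+O(|w|^{1-1/\nu})$, and the boundary is nearly the line $\{w_0-ie^{i\phi}t\}$. The domain corresponding to $U_{k,+}$ is the half-plane on the side toward which $-1$ points, since $z_{k,+}$ is attracting. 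Translating by $\approx-1$ maps this half-plane strictly into itself. An error of size $O(|w|^{1-1/\nu})\ll$ the transverse displacement $\cos\phi$ cannot undo this once $r_0$ is small.

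\begin{itemize}
\item Since $f_0$ maps $\overline U$ into the side of $\ell$ containing $U$, and $f_0(\overline U)$ is a compact connected set near $U$ that avoids $\ell$ except at $0$, we get $f_0(U)\subset U$.
\item For $s=-$, the same argument applied to a local inverse $f_0^{-1}$ gives $U_-\subset f_0(U_-)$.
\end{itemize}

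\textbf{Step 3: Fatou coordinate.} We take the standard Fatou coordinate on the attracting petal. It is $\Phi(z)=\lim_{m\to\infty}\big(w(f_0^m(z))+m\cdot(\text{normalization})-\text{log correction}\big)$, built as in Milnor's construction with the $\frac{\nu+1}{\nu}$-type residue term $\beta\log w$. Equivalently, $U_{k,+}/f_0$ is a cylinder, and $\Phi$ is its uniformization lifted. Injectivity and the relation $\Phi\circ f_0=\Phi+1$ are classical; the sign convention is fixed by replacing $w$ with $-w$. For $s=-$ we apply this to $f_0^{-1}$ on $U_{k,-}$. For uniqueness, let $\Psi$ be another such map. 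Then $\Psi\circ\Phi^{-1}-\mathrm{id}$ is $1$-periodic on the image $\Phi(U)$, which contains a right half-plane, so it descends to a holomorphic function on a punctured neighbourhood of an end of $\mathbb C/\mathbb Z$. Univalence forces it to be bounded at that end and in fact constant, by a Liouville-type argument on the quotient cylinder.

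\textbf{Main obstacle.} The most delicate step is Step 1–2 near $0$. One must check uniformly that the perturbed trajectory stays in a single branch sector and that the side-crossing argument survives the $O(|w|^{1-1/\nu})$ error. Both are handled by choosing $r_0$ small relative to $\cos\phi$. Uniformity in $\phi$ holds on compact subsets of $(-\pi/4,\pi/4)$, which suffices for a fixed $\phi$.
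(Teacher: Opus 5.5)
The paper gives no proof of this proposition. It is imported from Oudkerk's thesis \cite{Ou99}, in the presentation of \cite{Ou02}, so there is no in-paper argument to compare with. Your strategy is the standard one and it works: straighten the flow in the chart $w=\nu^{-1}z^{-\nu}$, read the trajectories as perturbed straight lines, get invariance from the unit shift transverse to $\ell$, then take the classical Fatou coordinate. However, three of your claims are false as written and need repair.

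\emph{Step 2, the error term.} The conjugated map is not $w\mapsto w-1+O(|w|^{1-1/\nu})$. From $(1+z^{\nu}+O(z^{\nu+1}))^{-\nu}=1-\nu z^{\nu}+O(z^{\nu+1})$ one gets $w\circ f_0=w-1+O(z)=w-1+O(|w|^{-1/\nu})$. With your exponent the error is unbounded along $\ell$ for $\nu\ge2$, so the comparison with the normal displacement $\cos\phi$ fails. With the correct exponent it works, provided you also note two things. In the $w$-chart, $\ell$ is a global graph over the line $\mathbb R\,ie^{i\phi}$, because its tangent stays within $O(|w|^{-1/\nu})$ of $-ie^{i\phi}$. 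Hence a shift of length about $1$ with normal component $\cos\phi$ lands strictly on the petal side. Then $f_0(\overline{\ell})\subset U\cup\{0\}$, together with univalence of $f_0$ near $\overline U$, gives $f_0(U)\subset U$. Also, with $R=(\nu r_0^{\nu})^{-1}$, one has $w(z_{k,+})=-R$ and $w(z_{k,-})=+R$, the reverse of what you wrote in Step 1; Step 2 uses the correct sign.

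\emph{Step 1, the radius bound.} The curve does not stay in $|z|\le r_0(1+o(1))$. On the model line $w=R-ie^{i\phi}t$ one has $|w|^2=R^2+2Rt\sin\phi+t^2\ge R^2\cos^2\phi$. So the trajectory reaches $|z|=r_0(\cos\phi)^{-1/\nu}(1+o(1))$, which exceeds $r_0$ when $\phi\neq0$. Since $I_{k,s,f_0,\phi}$ is the maximal interval on which the solution stays in $K_0$, this is exactly where $|\phi|<\pi/4$ is needed: $(\cos\phi)^{-1/\nu}<2^{1/(2\nu)}<2$, so the curve stays inside $\{|z|\le 2r_0\}$.

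\emph{Step 3, uniqueness.} Up to $o(|\zeta|)$, $\Phi(U_{k,+,f_0,\phi})$ is the slanted half-plane $\{{\rm Re}(e^{-i\phi}\zeta)>C\}$. This contains no right half-plane when $\phi\neq0$. Moreover, a right half-plane modulo $\mathbb Z$ is the whole cylinder, not a neighbourhood of one end, so the ``bounded at that end'' picture is wrong. What is true is that $D=\Phi(U)$ satisfies $D+1\subset D$ and $\bigcup_{n\ge0}(D-n)=\mathbb C$. Hence $h=\Psi\circ\Phi^{-1}$ extends, via $h(\zeta)=h(\zeta+n)-n$, to an injective entire function. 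So $h(\zeta)=a\zeta+b$, and $h(\zeta+1)=h(\zeta)+1$ forces $a=1$. For $s=-$, run the same argument using $f_0^{-1}(U_-)\subset U_-$ and $D-1\subset D$. Univalence of $\Psi$ is genuinely needed here: $\Phi+\epsilon\sin(2\pi\Phi)$ also satisfies the functional equation.

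\emph{A cleaner chart for Steps 1--2.} Use $\chi(z)=\int^z d\xi/(f_0(\xi)-\xi)=-\frac{1}{\nu z^{\nu}}(1+O(z))+\beta\log z$. It satisfies $\frac{d}{dt}\chi(\gamma(t))=ie^{i\phi}$ exactly, so the trajectories of \eqref{vfield} are straight lines in this chart and no Gronwall estimate is needed. It also satisfies $\chi\circ f_0=\chi+1+O(z^{\nu})$.
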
 
	
		\begin{figure}[h]
		\begin{center}
			\includegraphics[height=4cm]{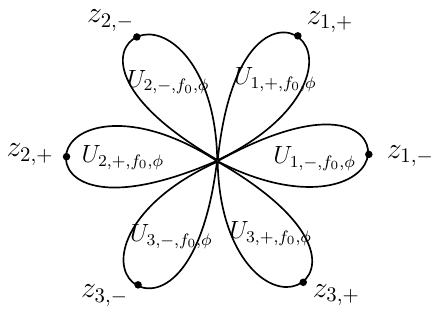}
		\end{center}
		\caption{The sets $U_{k, s, f_0, \phi}$, in the case $\nu=3$.}
		\label{fig:setsf0}
	\end{figure}
	
	For each $k\in \mathbb Z_\nu$, the sets $U_{k, +, f_0, \phi}$ and $U_{k, -, f_0, \phi}$ are called the $k$-th {\it attracting petal} and the $k$-th {\it repelling petal}  for $f_0$,  respectively.

Let  $f\in \mathcal F$ and  $\phi\in(-\pi/4, \pi/4)$.  If for each $k\in \mathbb Z_\nu$ and $s\in \{\pm\}$, we have $I_{k, s, f, \phi}=\mathbb R$ and values $t_{\pm}\in \mathbb R$ such that $t_-<0<t_+$ and 
$$\gamma_{k, \pm, f, \phi}(t)\notin \mathbb D(0, r_0/2)\Longleftrightarrow t\in [t_-,t_+],$$
then we say that $f$ is {\it $\phi$-well behaved}, and let
 $$\mathcal{WB}_\phi:=\{f\in \mathcal F; f \text{ is } \phi\text{-well behaved}\}.$$
In particular, $f_0\in \mathcal{WB}_\phi$ for each $\phi\in(-\pi/4, \pi/4)$.

%\begin{thm}
	%\end{thm}

\begin{pro} If $f\in \mathcal{WB}_\phi$ for some  $\phi\in(-\pi/4, \pi/4)$, then  for each $k\in \mathbb Z_\nu$ and $s\in \{\pm\}$, we have
	
	1.  The limits $\gamma_{k, s, f, \phi}(+\infty)$ and $\gamma_{k, s, f, \phi}(-\infty)$ exist.
	
	2.  $\gamma_{k, +, f, \phi}(+\infty)=\gamma_{k, -, f, \phi}(+\infty)$ and $\gamma_{k, +, f, \phi}(-\infty)=\gamma_{k-1, -, f, \phi}(-\infty)$.
	
	3. If $\gamma_{k, s, f, \phi}(+\infty)\neq \gamma_{k, s, f, \phi}(-\infty)$, then there exist some $j\in \mathbb Z_\nu$ and $s'\in \{\pm\}\setminus\{s\}$ such that
	$$\gamma_{k, s, f, \phi}(+\infty)=\gamma_{j, s', f, \phi}(+\infty), \ \gamma_{k, s, f, \phi}(-\infty)=\gamma_{j, s', f, \phi}(-\infty).$$
	
	4.  $\ell_{k, \pm, f, \phi}\cap f(\ell_{k, \pm, f, \phi})=\emptyset$.
	 
	Also for every $f$-fixed point $\sigma\in K_0$, there exists $k\in \mathbb Z_\nu$ and $s\in \{\pm\}$
	 for which $\sigma=\gamma_{k, s, f, \phi}(+\infty)$ or $\sigma=\gamma_{k, s, f, \phi}(-\infty)$. 
\end{pro}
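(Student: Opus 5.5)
The plan is to treat the vector field \eqref{vfield}, $\dot z = e^{i\phi}\, i(f(z)-z)$, as a holomorphic vector field $v_f$ on a neighborhood of $K_0$. Its singular points are exactly the fixed points of $f$ in $K_0$, and there are $\nu+1$ of them counted with multiplicity. We then combine the well-behaved hypothesis with Poincaré--Bendixson-type arguments for holomorphic (hence real-analytic, area-preserving up to a conformal factor) flows. The key observation is that $i\,e^{i\phi}(f(z)-z)$ is holomorphic, so its real trajectories are horizontal trajectories of the meromorphic quadratic-type differential $dz/(ie^{i\phi}(f(z)-z))$. Consequently there are no limit cycles other than periodic orbits surrounding centers, and a trajectory that stays in a compact set free of zeros cannot accumulate without being periodic. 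This rules out recurrence in the annulus-like region where $\mathbb D(0,r_0/2)$ meets the flow.

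\textbf{Step 1 (item 1).} By well-behavedness, for $t>t_+$ the curve $\gamma_{k,s,f,\phi}(t)$ stays in $\mathbb D(0,r_0/2)$, and it is defined for all $t$. Its $\omega$-limit set is a compact, connected, invariant subset of $\overline{\mathbb D(0,r_0/2)}$. I would argue as follows.
\begin{itemize}
\item Near a simple zero $\sigma$ with $f'(\sigma)\ne1$, the linearization of $v_f$ at $\sigma$ has eigenvalue $ie^{i\phi}(f'(\sigma)-1)$, so the point is a sink, source, center or focus.
\item Every trajectory near a holomorphic zero either converges to the zero, leaves its neighborhood, or is periodic (centers). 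A periodic trajectory would bound a disk containing a zero. But $\gamma_{k,s,f,\phi}$ enters from outside $\mathbb D(0,r_0/2)$ and is not closed, by the monotone-time exit condition.
\item Hence, by the Poincaré--Bendixson theorem for analytic flows with finitely many zeros, the $\omega$-limit is a single zero, or a union of zeros and connecting separatrices (a polycycle).
\item A polycycle is excluded because holomorphic flows have no hyperbolic saddles. Multiple zeros of $v_f$ do have elliptic/hyperbolic sectors, but each hyperbolic sector's separatrices go to the same zero, so a polycycle reduces to homoclinic loops at one zero, whose limit is still that single point.
\end{itemize}
Therefore the limit exists. The $-\infty$ case is the same with time reversed.

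\textbf{Steps 2 and 3 (items 2 and 3).} These are topological. The curves $\ell_{k,\pm}$ are pairwise disjoint trajectories (distinct initial points on distinct directions, and for $f$ close to $f_0$ they do not coincide). They are arranged in the cyclic order $z_{1,-},z_{1,+},z_{2,-},\dots$ on the circle $|z|=r_0$. Each crosses the annulus $r_0/2\le|z|\le 2r_0$ exactly once in and once out, and for $f$ near $f_0$ they do so near the $f_0$-picture. Consider the region between consecutive curves $\ell_{k,+}$ and $\ell_{k,-}$ inside $\mathbb D(0,r_0/2)$. The flow on its boundary arc near radius $r_0/2$ points along a consistent direction, which is transverse because $v_{f}\approx v_{f_0}$ there. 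Flow-box continuity then shows that the forward ends of adjacent trajectories enter the same sector and hence converge to the same zero. This gives the equalities in item 2. Item 3 follows from the ordering of the ends: the endpoints of the $2\nu$ trajectories pair up into a planar non-crossing pattern, and a trajectory with distinct ends must be "parallel" to one of opposite sign sharing both ends.

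\textbf{Step 4 (items 4 and the final claim).} For item 4, note $f(z)-z$ is the time-$\approx1$ displacement in the direction $-ie^{-i\phi}\dot z$, which is transverse to the flow with angle bounded away from $0$ since $|\phi|<\pi/4$. Thus $f(\ell)$ lies strictly on one side of $\ell$. For the final claim, every zero $\sigma$ in $K_0$ has a basin or a sector, and its trajectories must exit $\mathbb D(0,r_0/2)$ (no closed separatrix networks by the above). By well-behavedness they are therefore squeezed between consecutive $\gamma_{k,s}$, so $\sigma$ is an end of one of them.

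The main obstacle is Step 1 together with the exclusion of centers and homoclinic accumulation in the final claim. Here I would first try the standard local normal form $\dot w = w^{m}/(1+cw^{m-1})$ for zeros of holomorphic vector fields to classify the sectors.
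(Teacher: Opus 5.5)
The paper gives no proof of this proposition; it is quoted from Oudkerk \cite{Ou99,Ou02}. So I judge your plan on its own merits.

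\textbf{Item 1.} The strategy is the right kind: Poincar\'e--Bendixson, plus the fact that periodic orbits of a holomorphic flow come in open annuli because the time-$T_0$ map is holomorphic and equals the identity on the orbit. The polycycle step, however, is wrong as written. Zeros of a holomorphic vector field have no hyperbolic sectors at all. Moreover, if the $\omega$-limit set really were a union of homoclinic loops at $\sigma$, the trajectory would \emph{not} converge. What you need is the following. Suppose $\gamma(t_j)\to\sigma$ with $t_j\to+\infty$. The local normal form (linearization at a simple zero; $\dot w\approx 1$ in $w\sim z^{-(m-1)}$ at a zero of order $m\ge 2$) shows that either $\gamma$ converges to $\sigma$, or its backward orbit from $\gamma(t_j)$ stays in a small disc about $\sigma$. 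The second option is impossible because $\gamma(t_-)\notin\mathbb D(0,r_0/2)$. This part is repairable.

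\textbf{Items 2--4 and the final claim.} These carry the real content, and here the proposal has genuine gaps.

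(i) Items 2 and 3.
\begin{itemize}
\item Flow-box continuity only controls trajectories on compact time intervals in the annulus where $v_f\approx v_{f_0}$. It says nothing about which of the $\nu+1$ clustered zeros a trajectory reaches once it is inside $\mathbb D(0,r_0/2)$. So ``enter the same sector, hence converge to the same zero'' is a non sequitur.
\item Convergence to a given zero is an open condition. Hence, if the two forward tails bounding an inward arc of $\partial\mathbb D(0,r_0/2)$ had different limits, some trajectory entering through that arc would have to leave the disc again. Excluding this is exactly where the well-behaved hypothesis must be used, and you never use it.
\item For item 3, you must show that the band of trajectories from $q$ to $p$ adjacent to $\gamma_{k,s}$ is closed off by another \emph{special} trajectory $\gamma_{j,s'}$. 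It could a priori be closed off by some other escaping trajectory; a ``non-crossing pattern'' does not rule this out.
\item Check your adjacency. With $z_{k,+}=e^{\pi i/\nu}z_{k,-}$ and the field \eqref{vfield}, the backward tail of $\gamma_{k,+}$ sits next to that of $\gamma_{k+1,-}$. An adjacency argument therefore yields $\gamma_{k,+}(-\infty)=\gamma_{k+1,-}(-\infty)$, which matches the printed index $k-1$ only when $\nu\le 2$.
\end{itemize}

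(ii) Item 4. Transversality of $f(z)-z=-ie^{-i\phi}\dot z$ to the flow only shows that $f(z)$ misses the part of $\ell$ near $z$; it cannot exclude $f(z)$ landing on a distant part of $\ell$. Consider the linear model $f(z)=\sigma+\mu(z-\sigma)$ with $\lambda=ie^{i\phi}(\mu-1)$. Your transversality holds exactly there, yet $f(\ell)=\ell$ for every trajectory whenever $(\mathrm{Log}\,\mu+2\pi i m)/\lambda\in\mathbb R$ for some $m\neq 0$. In terms of the index, this condition is $\mathrm{Im}(e^{-i\phi}\iota(f,\sigma))\approx -\cos\phi/(2\pi m)$: the spiral is tight enough that $f$ shifts $\ell$ onto another of its own turns. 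This bounded-imaginary-part configuration is the kind the well-behaved hypothesis must rule out (compare Lemma \ref{non-wb}), so that hypothesis has to enter your argument.

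(iii) The final claim. A center of $v_f$, i.e.\ a zero with $e^{-i\phi}\iota(f,\sigma)\in\mathbb R$, has neither a basin nor a sector and is the end of no trajectory. Your sentence ``every zero has a basin or a sector'' skips this case. You must show that centers cannot occur for $f\in\mathcal{WB}_\phi$.
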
 

	\begin{figure}[h]
	\begin{center}
		\includegraphics[height=5cm]{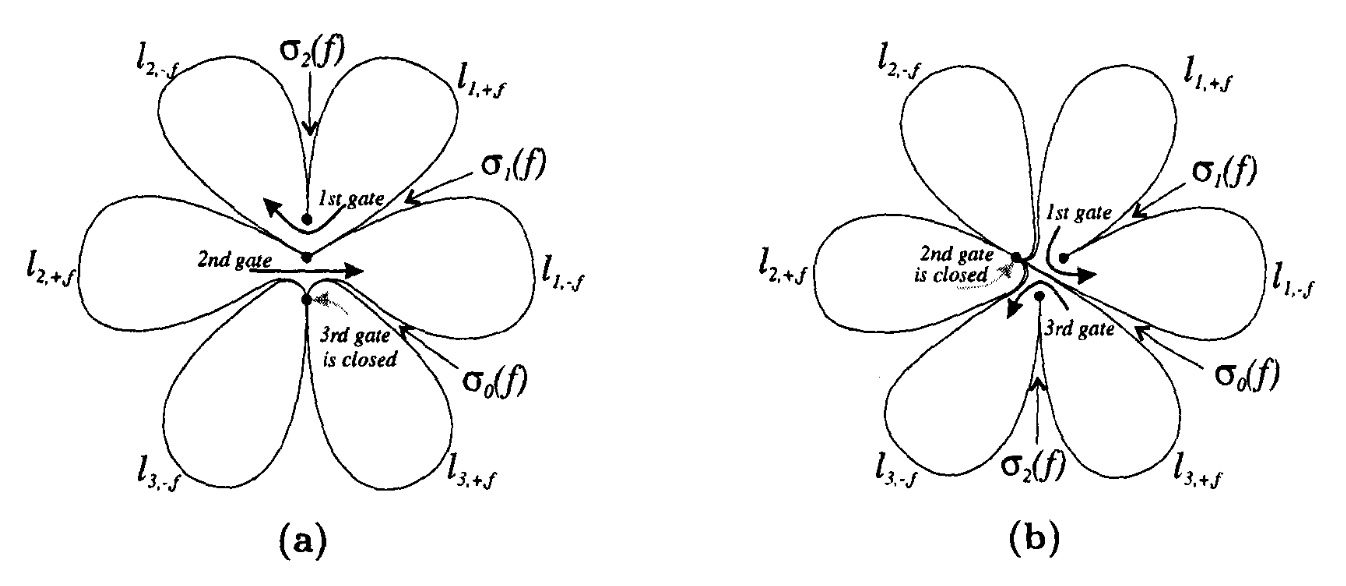}
	\end{center}
	\caption{Two cases of well behaved map $f$.   In both cases, $\nu=3$ and $\sigma_0(f), \sigma_1(f), \sigma_2(f)$  are fixed points of $f$.  This Figure comes from Oudkerk's thesis \cite[Figure 2.4]{Ou99}.  The gate structure  $\mathbf G=(2,1,*)$ for (a),  and  $\mathbf G=(1,*,3)$ for (b).}
	\label{fig:gate2}
\end{figure}

If $\gamma_{k, s, f, \phi}(+\infty)=\gamma_{k, s, f, \phi}(-\infty)$, let $U_{k, s, f, \phi}$ be the open Jordan domain bounded by $\overline{\ell_{k, s, f, \phi}}$ and call it a {\it single petal}. Otherwise, if 
$$\gamma_{k, +, f, \phi}(+\infty)=\gamma_{j, -, f, \phi}(+\infty)  \text{ and } \gamma_{k, +, f, \phi}(-\infty)=\gamma_{j, -, f, \phi}(-\infty),$$
we let $U_{k, +, f, \phi}=U_{j, -, f, \phi}$ denote the open Jordan domain bounded by $\overline{\ell_{k, +, f, \phi}}\cup \overline{\ell_{j, -, f, \phi}}$, and call this a {\it double petal}.

\begin{pro} If $f\in \mathcal{WB}_\phi$ for some  $\phi\in(-\pi/4, \pi/4)$, then  for each $k\in \mathbb Z_\nu$ and $s\in \{\pm\}$,  there is a  map $\Phi_{k, s, f, \phi}: U_{k, s, f, \phi}\rightarrow \mathbb C$ satisfying 
	$$\Phi_{k, s, f, \phi}(f(z))=\Phi_{k, s, f, \phi}(z)+1,$$
	whenever $z, f(z)\in U_{k, s, f, \phi}$. This $\Phi_{k, s, f, \phi}$ is unique up to addition by a constant, therefore we can normalize it so that $\Phi_{k, s, f, \phi}(z_{k,s})=0$.
\end{pro}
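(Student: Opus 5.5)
The plan is to build $\Phi_{k,s,f,\phi}$ as a rectifying coordinate for the time-one map of the vector field \eqref{vfield}. The construction is local on $U:=U_{k,s,f,\phi}$, whether $U$ is a single petal or a double petal, and it runs parallel to the classical construction of Fatou coordinates for $f_0$ recalled in the preceding proposition.

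First I will fix a quotient model of $U$. By the well-behaved hypothesis and part 4 of the previous proposition, the boundary arc $\ell=\ell_{k,s,f,\phi}$ (in the double-petal case, either of the two boundary trajectories) is disjoint from its image $f(\ell)$. Both arcs join the same two fixed points, or the same single fixed point when the petal is single. Since $\ell$ is a trajectory of $\dot z=e^{i\phi}i(f(z)-z)$, and $f(z)-z$ is the displacement, the image $f(\ell)$ lies on a definite side of $\ell$; this follows from $|\phi|<\pi/4$ together with the closeness of $f$ to $f_0$ on $K_0$. Hence $\ell$ and $f(\ell)$ bound a crescent-shaped fundamental domain $S\subset \overline U$. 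Every orbit in $U$ either enters $S$ or passes through it, because $U$ contains no fixed points and orbits must converge to the boundary fixed points. Gluing $\ell$ to $f(\ell)$ by $f$ produces a Riemann surface $U/f$, which is an annulus conformally equivalent to $\mathbb C/\mathbb Z$. This identification is the step I expect to be the main obstacle. One must check that the ends of the quotient are punctures rather than round annuli of finite modulus. I would do this by a quasiconformal comparison with $f_0$: in the linearising coordinate near a non-parabolic fixed point $\sigma$, or near the parabolic point of $f_0$, the two ends of $S$ shrink toward $\sigma$ and look like sectors whose quotient by $z\mapsto \lambda z$ has infinite modulus. Alternatively, I would build an explicit quasiconformal map $S\to\{0\le {\rm Re}\,w\le 1\}$ that conjugates $f|_\ell$ to translation, and then apply the measurable Riemann mapping theorem to straighten it.

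Given the uniformisation $\pi: U/f\to\mathbb C/\mathbb Z$, the lift of the projection $U\to U/f$ through the universal cover $\mathbb C\to\mathbb C/\mathbb Z$ is well defined on the simply connected domain $U$. It defines a holomorphic $\Phi:U\to\mathbb C$ with $\Phi(f(z))=\Phi(z)+1$ whenever $z,f(z)\in U$. The relation $\Phi\circ f=\Phi+1$ holds, rather than $\Phi\circ f=\Phi$ modulo $\mathbb Z$ only, because $f$ moves a point of $\ell$ to $f(\ell)$, which corresponds to crossing the fundamental strip once. Univalence is not claimed and is not needed.

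For uniqueness, suppose $\Phi_1$ and $\Phi_2$ both satisfy the equation. Then $\Phi_2\circ\Phi_1^{-1}$ is defined on the image of $S$ and commutes with the translation $w\mapsto w+1$, so $\Phi_2-\Phi_1$ descends to a holomorphic function on $U/f\cong\mathbb C/\mathbb Z$. Near each end this function stays bounded, since each $\Phi_i$ has the same growth ${\rm Im}\,\Phi_i\to\pm\infty$ toward the fixed points. By the removable singularity theorem on the sphere, $\Phi_2-\Phi_1$ is therefore constant. Finally, $z_{k,s}\in\ell\subset\overline U$, so the normalization $\Phi_{k,s,f,\phi}(z_{k,s})=0$ is achieved by subtracting the constant $\Phi(z_{k,s})$, with $\Phi$ extended continuously to $\ell$.
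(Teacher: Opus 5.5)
The paper does not prove this proposition; it quotes it from Oudkerk's thesis \cite{Ou99}. Your existence route (glue $\ell$ to $f(\ell)$, uniformize the cylinder, lift to $U$) is the standard one. However, the uniqueness step has a genuine gap, and the crux of existence is only sketched.

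\textbf{Uniqueness.} Taken literally, for any map satisfying the functional equation, uniqueness is false. If $\Phi$ is a solution, so is $\Phi+\epsilon\sin(2\pi\Phi)$ for every $\epsilon\in\mathbb C$. The difference is unbounded, because $\Phi(S)$ reaches ${\rm Im}\,w=\pm\infty$. So the sentence ``near each end this function stays bounded, since each $\Phi_i$ has the same growth'' cannot be justified. Nothing controls the growth of a competitor $\Phi_2$, and two functions whose imaginary parts tend to $\pm\infty$ can differ by an unbounded amount. You also use $\Phi_1^{-1}$ right after declaring univalence unnecessary. In fact univalence is exactly the hypothesis under which uniqueness holds, as in the proposition for $f_0$.

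Your lift is univalent. If $\Phi(z_1)=\Phi(z_2)$, the two points have the same image in $U/f$. Since $f$ is injective on $K_0$, this gives $z_2=f^m(z_1)$ through iterates lying in $U$, so $\Phi(z_2)=\Phi(z_1)+m$ and $m=0$.

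Now take another univalent solution $\Phi_2$. Then $h=\Phi_2\circ\Phi_1^{-1}={\rm id}+g$ with $g$ entire and $1$-periodic, and $h$ is univalent on $\Phi_1(U)$. For large $|{\rm Im}\,w|$, $\Phi_1(U)$ is essentially a half-plane or a wide strip, so it contains discs of radius $2$ centred along a full period of each horizontal line. Koebe distortion on such discs, together with $h(w+1)-h(w)=1$, gives $1/4\le|h'|\le 9/4$ there. Hence $g'$ is bounded near both ends. Writing $g(w)=G(e^{2\pi i w})$, this makes $\zeta G'(\zeta)$ bounded near $0$ and $\infty$. Since $G'$ has zero residues, $G$ extends holomorphically to the sphere and is constant.

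\textbf{Existence.} The identification $U/f\cong\mathbb C/\mathbb Z$, which you rightly call the crux, is only gestured at, and your first suggestion does not work in general. The endpoints $\gamma_{k,s,f,\phi}(\pm\infty)$ are fixed points of $f$ itself, and two cases break linearization of $f$:
\begin{itemize}
\item They may be multiple fixed points of $f$. That is where single petals sit, not only at the parabolic point of $f_0$.
\item They may be simple but non-linearizable. For instance, $f(z)=e^{2\pi i\alpha}z+z^2$ with $\alpha$ a small Cremer number is well behaved, since its indices $\frac12\pm\frac i2\cot\pi\alpha$ avoid $S(\phi)$. Its fixed point $0$ is still a sink or source of the vector field, because ${\rm Re}\,(e^{i\phi}i(\lambda-1))\neq 0$.
\end{itemize}

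What should be linearized instead is the vector field $e^{i\phi}i(f(z)-z)$, which is always possible at a simple zero. In the logarithm of its linearizing coordinate:
\begin{itemize}
\item $\ell$ is a straight line in direction $e^{i\phi}i(\lambda-1)$;
\item this direction is transverse to $\log\lambda\approx\lambda-1$, because $|\phi|<\pi/4$;
\item $f$ is translation by $\log\lambda$ up to an exponentially small error.
\end{itemize}
At multiple fixed points one uses the usual parabolic estimates for $f$. This is what produces your quasiconformal map $S\to\{0\le{\rm Re}\,w\le 1\}$ compatible with the gluing, and hence the punctured ends.
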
 

For $f\in \mathcal{WB}_\phi$, let $S_{k, \pm, f, \phi}$ be the component of $\overline{U_{k, \pm, f, \phi}}\setminus f^{\pm 1}({U_{k, \pm, f, \phi}})$ which contains $z_{k, \pm}$. Then  
$$S'_{k, \pm, f, \phi}:=S_{k, \pm, f, \phi}\setminus \{\gamma_{k, \pm, f, \phi}(+\infty), \gamma_{k, \pm, f, \phi}(-\infty)\}$$
is the {\it fundamental domain}. The Fatou coordinate $\Phi_{k, \pm, f, \phi}$ is well defined on this set.

	\vspace{5pt}
\noindent \textbf{Gate structure and lifted phases.}	 For each map $f\in \mathcal{WB}_\phi$, the {\it gate structure}
for $f$ (with respect to $\phi$) is constructed as follows:
 \begin{itemize}
	
	\item Draw a  circle, and mark $2\nu$ points around its perimeter.
	
	\item Label these in anticlockwise order as $(1,-), (1,+), \cdots, (\nu, -), (\nu, +)$.
	
	\item For each $k$ such that $U_{k, +, f, \phi}=U_{j, -, f, \phi}$ is a double petal, draw an arrow from $(k, +)$ to $(j, -)$.
	\end{itemize}
For each  $f\in \mathcal{WB}_\phi$, the gate structure picture  satisfies the properties: 

 \begin{itemize}
	
	\item The arrows (can be drawn so that they) do not cross one another. 
	
	\item The arrows always go from `$+$'-points to `$-$'-points.

	\item No  marked points have more than one arrow going to or from it.
\end{itemize}

Any picture satisfying above three properties is called {\it admissible}. Clearly, there are only finitely many admissible pictures (for a fixed $\nu$).

Every  admissible gate structure   can be represented by a gate vector $\mathbf G=(\mathbf \g(1), \cdots, \g(\nu))\in (\mathbb Z_\nu\cup \{*\})^\nu$ so that
$$\g(k)=
\begin{cases} j &\text{ if there is an arrow from }(k, +) \text{ to } (j,-);\\
 * &\text{ if there is no arrow from }(k, +).
\end{cases}$$
We say the $k$-th gate of $\mathbf  G$ is {\it open} if $\g(k)\neq *$.

	\begin{figure}[h]
	\begin{center}
		\includegraphics[height=6cm]{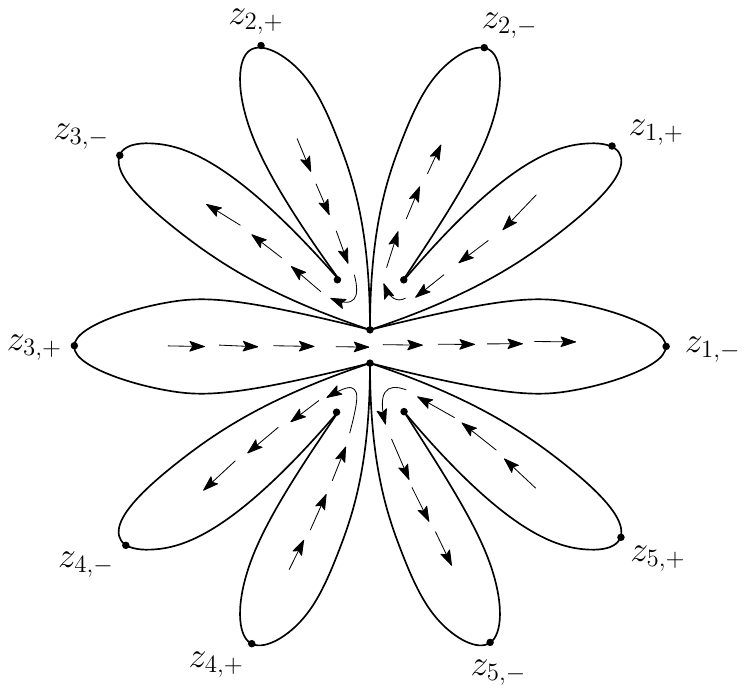}
	\end{center}
	\caption{An example of gate structure $\mathbf  G=(2,3,1,4,5)$, in this case $\nu=5$. }
	\label{fig: gate1}
\end{figure}

For $f\in \mathcal{WB}_\phi$ and  $k\in \mathbb Z_\nu$, we define
$$\tau_{k,\phi}(f)=\begin{cases} \Phi_{j, -, f, \phi}-\Phi_{k, +, f, \phi} &\text{ if  } U_{k, +, f, \phi}=U_{j, -, f, \phi} \text{ is a double petal}; \\
	\infty  &\text{ if  } U_{k, +, f, \phi}\text{ is a single petal}.
\end{cases}$$
We call $\tau_{k,\phi}$ the {\it lifted phase} for the $k$-th gate. Since  $\Phi_{k, s, f, \phi}(z_{k,s})=0$ for all $k\in \mathbb Z_\nu$  and $s\in \{\pm\}$, we see that when $\g(k)=j\neq *$,  
$$\tau_{k,\phi}(f)=\Phi_{j, -, f, \phi}(z_{k,+})=-\Phi_{k, +, f, \phi}(z_{j,-}).$$

%The {\it holomorphic index} $\iota(f, \sigma)$ of $f$ at a fixed point $\sigma$ is defined as
%\begin{equation} \label{h-ind}
%\iota(f, \sigma):=\frac{1}{2\pi i}\int_{\gamma}\frac{dz}{z-f(z)},
%\end{equation}
%where  $\gamma$ is a small loop  in the positive direction around $\sigma$.
% If $f'(\sigma)\neq 1$, then $\iota(f, \sigma)=\frac{1}{1-f'(\sigma)}$.

Suppose $f\in \mathcal{WB}_\phi$  and the $k$-th gate of $\mathbf  G$ is open.  All of the $\nu+1$ fixed points of $f$ in $K_0$ are assumed   in $\mathbb D(0, r_0/2)$, whilst $U_{k,+,f,\phi}$ contains no fixed points. The set $\mathbb D(0, r_0/2)\setminus U_{k,+,f,\phi}$ has two components.  The set of fixed points  in the component  which contains $\gamma_{k, +, f, \phi}(+\infty)$ (resp. $\gamma_{k, +, f, \phi}(-\infty)$) is denoted by ${\rm Fix}_{k, \phi}^{u}(f)$ (resp. ${\rm Fix}_{k, \phi}^{\ell}(f)$).  Essentially, ${\rm Fix}_{k, \phi}^{u}(f)$ (resp. ${\rm Fix}_{k, \phi}^{\ell}(f)$) consists of the fixed points above (resp. below) the $k$-th gate. 

Let $\mathbf G$ be an admissible gate vector and 
$$\mathcal{WB}_\phi(\mathbf G):=\{ f\in \mathcal{WB}_\phi; \ f \text{ has gate structure }  \mathbf G \text{ w.r.t. } \phi\}.$$

\begin{pro}     [\cite{Ou99} \S 2.4; \cite{Ou02} \S 6] \label{phase} For an admissible gate vector   $\mathbf G$, there are constants $c_k, c'_k$ such that if $f\in \mathcal{WB}_\phi(\mathbf G)$ and $\g(k)\neq *$,
	% the $k$-th gate of  $\mathbf G$ is open, 
	then
	\bess\tau_{k,\phi}(f)&=&2\pi i \sum_{\sigma\in {\rm Fix}_{k, \phi}^{u}(f)}\iota(f, \sigma)+c_k+o(1)\\
	&=&-2\pi i \sum_{\sigma\in {\rm Fix}_{k, \phi}^{\ell}(f)}\iota(f, \sigma)+c'_k+o(1),
	\eess
	as $f\rightarrow f_0$ in $\mathcal{WB}_\phi(\mathbf G)$, where $\iota(f, \sigma)$ is defined by \eqref{h-ind}.
\end{pro}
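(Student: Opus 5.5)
We compare the Fatou coordinates on the two ends of the double petal $U:=U_{k,+,f,\phi}=U_{j,-,f,\phi}$ with a single ``global'' coordinate built from the vector field. Near $0$ write $f(z)-z=:v_f(z)$. The holomorphic function $w(z)=\int \frac{dz}{v_f(z)}$ is defined on the simply connected domain $U$, which contains no fixed points. It conjugates $f$ to a translation up to an error. Indeed, $w(f(z))-w(z)=\int_z^{f(z)}\frac{d\zeta}{v_f(\zeta)}=1+\epsilon(z)$, and $\epsilon$ is small as $f\to f_0$ uniformly on the part of the petal away from the fixed points, because there $f$ is close to the time-one map of the flow of $\dot z=v_f(z)$.

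\textbf{Step 1.} Show that $\Phi_{k,+,f,\phi}-w$ and $\Phi_{j,-,f,\phi}-w$ converge, on the respective ends of $U$ (the attracting end near $z_{k,+}$ and the repelling end near $z_{j,-}$), to the corresponding quantities for $f_0$, up to additive constants depending only on the normalizations. This is the standard ``almost translation'' correction of Shishikura and Oudkerk. Summing the errors $\epsilon(f^m(z))$ along orbits converges uniformly, because orbits in a well-behaved petal spend about $1/|\text{dist}|$ steps near the fixed points, where $|\epsilon|=O(|z|^{\nu+1})$ is summable. I expect this uniform control of the correction through the gate, with constants independent of $f\in\mathcal{WB}_\phi(\mathbf G)$, to be the main obstacle. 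It is exactly where well-behavedness and the fixed gate structure are used.

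\textbf{Step 2.} From Step 1,
$$\tau_{k,\phi}(f)=\Phi_{j,-,f,\phi}(z_{k,+})=\big(w(z_{k,+})-w(z_{j,-})\big)+C_k(f),$$
where $C_k(f)\to c_k$ is a limit constant coming from $f_0$'s corrections, the normalizations at $z_{k,+}$ and $z_{j,-}$, and the integral of $1/v_{f_0}$ along the portions outside $\mathbb D(0,r_0/2)$. Here $w(z_{k,+})-w(z_{j,-})=\int_{z_{j,-}}^{z_{k,+}} dz/v_f$ is taken along a path inside $U$ crossing the gate.

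\textbf{Step 3.} Compare this path integral with an integral along a path $\beta$ from $z_{j,-}$ to $z_{k,+}$ that stays near $|z|=r_0$ and goes around the upper side. The two paths together form a loop enclosing exactly ${\rm Fix}^u_{k,\phi}(f)$. By the residue theorem, the difference is $\pm 2\pi i\sum_{\sigma\in{\rm Fix}^u}\operatorname{Res}_\sigma\frac{1}{f(z)-z}$, and these residues equal $-\iota(f,\sigma)$ by \eqref{h-ind}. Fixing orientation conventions gives the sign $+2\pi i\sum\iota$. Along $\beta$, which stays away from $0$, $1/v_f\to 1/v_{f_0}$ uniformly, so $\int_\beta dz/v_f$ converges to a constant, which we absorb into $c_k$. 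This yields the first formula.

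\textbf{Step 4.} For the second formula, go around the lower side instead, enclosing ${\rm Fix}^\ell_{k,\phi}(f)$ with the opposite orientation. Equivalently, use the holomorphic fixed point formula: the total $\sum_{\sigma\in K_0}\iota(f,\sigma)=\frac1{2\pi i}\oint_{|z|=2r_0}\frac{dz}{z-f(z)}$ converges to the $f_0$ value. Hence
$$2\pi i\sum_{{\rm Fix}^u}\iota=-2\pi i\sum_{{\rm Fix}^\ell}\iota+{\rm const}+o(1),$$
which defines $c'_k$.
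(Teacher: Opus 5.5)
The paper does not prove Proposition \ref{phase}; it quotes it from Oudkerk, so I am judging your argument on its own terms.

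Steps 3 and 4 are fine. The residue of $1/(f(z)-z)$ at $\sigma$ is $-\iota(f,\sigma)$, the integral $\int_\beta dz/v_f$ converges, and the second formula follows from the first because the total index converges, as in Lemma \ref{asm-iota}. But this is only bookkeeping: once Step 3 is granted, the formula in Step 2 \emph{is} the proposition, and Step 1 does not give it. Knowing $\Phi_{k,+,f,\phi}-w$ near $z_{k,+}$ and $\Phi_{j,-,f,\phi}-w$ near $z_{j,-}$, each only up to its own additive normalisation, tells you nothing about $\tau_{k,\phi}(f)$. Since $\Phi_{j,-,f,\phi}=\Phi_{k,+,f,\phi}+\tau_{k,\phi}(f)$ on $U$, what you need is the net change of the single function $h=\Phi_{j,-,f,\phi}-w$ from $z_{j,-}$ to $z_{k,+}$, i.e.\ across the gate. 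That is exactly the part you defer as the main obstacle. A further point: Step 1 presupposes that the Fatou coordinates converge to those of $f_0$ near $z_{k,+}$ and $z_{j,-}$. In this paper that is Proposition \ref{cont-c}, which assumes ${\rm Re}(e^{-i\phi}\tau_{k,\phi}(f))\to-\infty$, and Proposition \ref{phase} has no such hypothesis.

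The mechanism you propose for crossing the gate is also wrong. Expanding $1/v_f$ along $[z,f(z)]$ gives $w(f(z))-w(z)=1-\tfrac12 v_f'(z)+O(|v_f'|^2+|v_f|\,|v_f''|)$. So near $0$ the error $\epsilon$ has order $|z|^{\nu}$, not $|z|^{\nu+1}$; near a fixed point $\sigma$ of $f$ it has order $|\lambda_\sigma-1|$; and it does not tend to $0$ as $f\to f_0$. Along an $f_0$-orbit in a petal, $|z_m|^\nu\asymp 1/m$, so these errors are not summable. Indeed $\Phi_{k,\pm,f_0,\phi}-w_{f_0}=\tfrac{\nu+1}{2}\log z+O(1)$ is unbounded. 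For $f$ near $f_0$, the stretch of an orbit running from $|z|\approx r_0$ down to the scale $\rho$ of the fixed-point cluster already has $\sum|\epsilon|\asymp\log(r_0/\rho)\to\infty$. The signed sum through the gate stays bounded only by cancellation, because the leading term telescopes: $\sum_m -\tfrac12 v_f'(z_m)\approx -\tfrac12\,[\log v_f]$ along the orbit.

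A workable repair is to use $\tilde w=\int dz/v_f+\tfrac12\log v_f$, which is single-valued on the simply connected, fixed-point-free domain $U$.
\begin{itemize}
\item Its per-step error is $O(|v_f'|^2+|v_f|\,|v_f''|)$, which is summable along gate crossings, and the part of the sum coming from the gate region tends to $0$.
\item The extra term changes your Step 3 only by $\pi i$ times the number of fixed points above the gate, a constant determined by $\mathbf G$, so it can be absorbed into $c_k$.
\end{itemize}
Even then you need a uniform estimate for $\Phi_{j,-,f,\phi}-\tilde w$ on the whole exit fundamental domain. The reason is that the orbit of $z_{k,+}$ leaves $U$ where ${\rm Im}\,\Phi_{j,-,f,\phi}\approx{\rm Im}\,\tau_{k,\phi}(f)$. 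That point is far from $z_{j,-}$ and close to a gate fixed point, precisely in the horocyclic regime this paper is about.
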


 \begin{pro} [Selection Principle] \label{sp} For any sequence $(f_n)_n$ in $\mathcal F$ so that $f_n\rightarrow f_0$ uniformly in $K_0$, there exist a subsequence  $(f_{n_j})_j$,  an angle $\phi\in(-\pi/4, \pi/4)$, and an admissible gate vector $\mathbf G$ so that 
 	$$f_{n_j}\in \mathcal{WB}_\phi(\mathbf G), \text{ for all } j\geq 1,$$
 	and  for each $k\in \mathbb Z_\nu$ with $\g(k)\neq *$,
 	$${\rm Re}(e^{-i\phi} \tau_{k,\phi}(f_{n_j}))\rightarrow -\infty  \text{ as } j\rightarrow \infty.$$
 	\end{pro}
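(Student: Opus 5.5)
The plan is to reduce the statement to the structure of the rotated vector fields $\dot z=e^{i\phi}\, i(f_n(z)-z)$ near $0$, pass to subsequences freely, and choose $\phi$ off a finite exceptional set. First I would pass to a subsequence so that the $\nu+1$ fixed points $\sigma_{n,0},\dots,\sigma_{n,\nu}$ of $f_n$ in $K_0$ (which all tend to $0$ by Rouch\'e) have convergent normalized configuration. Concretely, let $\epsilon_n=\max_i|\sigma_{n,i}|$. If $\epsilon_n=0$ then $f_n$ has a single multiple fixed point at $0$, and the argument for $f_0$ applies directly. Otherwise I would rescale by $z=\epsilon_n w$, so that $(f_n(\epsilon_n w)-\epsilon_n w)/\epsilon_n^{\nu+1}$ converges locally uniformly on compact subsets of $\mathbb C$ to a monic-type polynomial $P(w)$ of degree $\nu+1$. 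The roots of $P$ are the limits of $\sigma_{n,i}/\epsilon_n$, and at least one of them lies on the unit circle.

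Outside a disk $\{|z|\le C\epsilon_n\}$, the field $i(f_n(z)-z)$ is a small perturbation of $i z^{\nu+1}(1+o(1))$, uniformly in $n$. So for every $\phi\in(-\pi/4,\pi/4)$, the trajectories starting at $z_{k,\pm}$ cross the annulus $r_0/2\le|z|\le 2r_0$ exactly once in each time direction, as they do for $f_0$. This gives the $t_\pm$ part of $\phi$-well-behavedness. Inside the small disk, the trajectories are governed by the polynomial vector field $e^{i\phi}iP(w)\partial_w$.

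The key step is the following. For $\phi$ outside a finite set $E$, the field $e^{i\phi}iP$ has no homoclinic separatrices, no saddle connections between the $2\nu$ ends at infinity, and no closed orbits or centers. Equivalently, no sum $2\pi i\sum_{\sigma\in S}\mathrm{Res}(1/P,\sigma)$ over a subset $S$ of the roots, separated off by a separatrix, has argument making it a purely real "period" after rotation by $e^{-i\phi}$. For such $\phi$ the field is structurally stable among polynomial fields of degree $\nu+1$. Hence for large $n$, every separatrix $\gamma_{k,\pm,f_n,\phi}$ tends to a fixed point in both time directions. This gives $f_n\in\mathcal{WB}_\phi$, with a gate structure $\mathbf G$ equal to the combinatorics of separatrix pairings of $e^{i\phi}iP$. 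After a further subsequence, $\mathbf G$ is constant.

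The difficulty is that roots may coalesce at smaller scales: some of the limit roots of $P$ can be multiple. Then the limiting field is not generic, and one must recurse. At each multiple root of $P$, rescale again around the cluster of $\sigma_{n,i}/\epsilon_n$ converging to it, and obtain a new limit polynomial of lower degree. There are at most $\nu$ levels. At each level only finitely many $\phi$ are bad, so a $\phi$ avoiding the union of the exceptional sets works at all scales simultaneously. I expect this multi-scale gluing of well-behavedness to be the main obstacle: one must check that separatrices entering an inner cluster disk behave like the outer field of the rescaled cluster.

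Finally, for an open gate $k$, use Proposition~\ref{phase}:
$$\tau_{k,\phi}(f_n)=2\pi i\sum_{\sigma\in{\rm Fix}^u_{k,\phi}(f_n)}\iota(f_n,\sigma)+c_k+o(1).$$
In the rescaled coordinates, this index sum equals $\epsilon_n^{-\nu}$ times $2\pi i$ times the corresponding residue sum of $1/P$ (or of the inner-scale polynomial), plus lower-order terms. This quantity is the time needed to traverse the double petal.

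The existence of the double petal for the field $e^{i\phi}iP$ forces the transit time to be positive, with its real part in the flow direction having a definite sign. Because $\epsilon_n\to0$ and $\phi\notin E$, this residue quantity, multiplied by $e^{-i\phi}$, has nonzero real part. Its sign is the one making $\mathrm{Re}(e^{-i\phi}\tau_{k,\phi}(f_n))\to-\infty$, consistent with the normalization $\tau=\Phi_{j,-}(z_{k,+})$. If the sign turned out wrong for some gate, that gate would in fact be closed for this $\phi$, which gives a contradiction. This yields the asserted divergence.
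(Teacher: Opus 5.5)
Your route is genuinely different from the paper's, and it is viable.

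\emph{What the paper does.} The paper never looks at trajectories of $f_n$. It takes the set $X(f_n)$ of all subset sums of the holomorphic indices of the $\nu+1$ fixed points, so $\#X(f_n)<2^{\nu+1}$. It fixes $N=2^{\nu+1}+\nu$ angles whose half-strips $S(\phi)$ from Lemma~\ref{non-wb} are pairwise disjoint. By pigeonhole it extracts a subsequence and $\nu+1$ of these angles with $X(f_{n_j})\cap S(\phi)=\emptyset$. Oudkerk's Lemma~\ref{non-wb} (not $\phi$-well behaved $\Rightarrow$ some index sum lies in $S(\phi)$) then gives $f_{n_j}\in\mathcal{WB}_\phi$ at all of these angles. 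Finiteness of admissible pictures fixes $\mathbf G$, and a last selection among the $\nu+1$ angles gives ${\rm Re}(e^{-i\phi}\tau_{k,\phi}(f_{n_j}))\to-\infty$. This is short and uses only index sums, but all the geometry is outsourced to Oudkerk's lemma, and the angle comes from a pre-chosen finite list.

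\emph{What your route buys.} Your blow-up to a hierarchy of limiting polynomial fields $e^{i\phi}iP$ in effect reproves such a lemma from the structure theory of polynomial vector fields. In return it:
identifies $\mathbf G$ as the separatrix combinatorics of that hierarchy;
works for every $\phi\in(-\pi/4,\pi/4)$ outside a finite set depending on the subsequence;
makes the divergence of the lifted phases automatic (a positive strip width at the limiting scale times a scale factor tending to $\infty$), so no second selection of the angle is needed.
The cost is that the gluing step you flag carries essentially all the work.

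\emph{Points to handle when writing it out.}
Normalize each cluster by its barycentre and diameter rather than by $\max_i|\sigma_{n,i}|$. Otherwise a level can degenerate to $(w-p)^{\nu+1}$ and carry no information.
The statement allows arbitrary $f_n\to f_0$, so exact multiple fixed points of $f_n$ (closed gates) must be allowed to end the recursion as parabolic points, not only in the case $\epsilon_n=0$.
The gluing is cleanest in rectifying coordinates. In $W=-1/(\nu z^\nu)$ the trajectory through $z_{k,\pm}$ is a near-line in direction $ie^{i\phi}$ at bounded offset. After rescaling this offset becomes $O(\epsilon_n^{\nu})$, so in each time direction the trajectory shadows a separatrix of $e^{i\phi}iP$ at $\infty$.
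The same happens at a multiple root $p$ of multiplicity $m$, with the offset multiplied by $\delta_n^{m-1}$. There the inner polynomial must carry the constant $\prod_{p_i\neq p}(p-p_i)$.
In the last step, say explicitly which level dominates. The fixed points on one side of an open gate are whole sub-clusters of the cluster whose rescaled field contains the corresponding strip. The rest are whole clusters attached at coarser levels, whose total indices are of strictly lower order. So the leading term of $\sum_{{\rm Fix}^u}\iota$ is the period of that strip, which is nonzero because the strip has positive width.
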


 \begin{lem} [\cite{Ou02}, Lemma 13.1] \label{non-wb}  There are two constants   $0<M\ll L$  so that if $f\notin  \mathcal{WB}_\phi$ is close to $f_0$, then there is a subset $E\neq \emptyset$ of the fixed points of $f$ in $K_0$ so that
 	$$\sum_{\sigma\in E} \iota(f, \sigma)\in S(\phi):= e^{i\phi}\{w\in \mathbb C; \ {\rm Re }(w)<-L, |{\rm Im }(w)|<M\},$$
 	where $\iota(f, \sigma)$ is the holomorphic index of $f$ at the fixed point $\sigma$,  defined as
 	\begin{equation} \label{h-ind}
 		\iota(f, \sigma):=\frac{1}{2\pi i}\int_{|z-\sigma|=r}\frac{dz}{z-f(z)}, 
 	\end{equation}
 	for small $r>0$.
 	\end{lem}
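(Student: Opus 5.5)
The plan is to read off the holomorphic indices as travel times of the vector field \eqref{vfield}. For $g(z)=e^{i\phi}\,i\,(f(z)-z)$, a trajectory $\gamma$ of $\dot z=g(z)$ satisfies
$$t_2-t_1=\int_{\gamma|_{[t_1,t_2]}}\frac{dz}{g(z)}.$$
Suppose we close such a trajectory arc into a Jordan curve $C$ by adding a short arc $\alpha$ near $\partial\mathbb D(0,r_0/2)$, and let $E$ be the set of fixed points of $f$ enclosed by $C$. By the residue theorem and \eqref{h-ind},
$$\oint_C\frac{dz}{f(z)-z}=-2\pi i\sum_{\sigma\in E}\iota(f,\sigma).$$
This gives
$$-2\pi e^{i\phi}\sum_{\sigma\in E}\iota(f,\sigma)\cdot\big(\pm 1\big)^{-1}=(t_2-t_1)+\int_\alpha\frac{dz}{g(z)},$$
up to a sign fixed by the orientation of $C$. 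In other words, $e^{-i\phi}\sum_E\iota$ is, up to a universal negative factor, a real time plus a bounded error. So it suffices to produce such a loop along which the real time is huge and the closing term is uniformly bounded.

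\textbf{Step 1: the annulus estimate.} On the annulus $r_0/4\le|z|\le 2r_0$ and for $f$ close to $f_0$, we have $f(z)-z=z^{\nu+1}(1+O(r_0)+o(1))$. Hence the flow there is a uniformly small perturbation of the model flow $\dot z=e^{i\phi}iz^{\nu+1}$. In particular, each of the trajectories $\gamma_{k,\pm,f,\phi}$ enters $\mathbb D(0,r_0/2)$ within bounded time. Moreover, $|1/g|$ is bounded on the annulus by a constant $\asymp r_0^{-\nu-1}$, so any arc $\alpha$ there of length $O(r_0)$ contributes $O(1)$. That bound will be the source of the constant $M$.

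\textbf{Step 2: the failure of well-behavedness produces a long excursion.} If $f\notin\mathcal{WB}_\phi$, then for some $(k,s)$ one of the following holds. Either the maximal interval $I_{k,s,f,\phi}\neq\mathbb R$, or the set of times spent outside $\mathbb D(0,r_0/2)$ is not a single interval $[t_-,t_+]$ with $t_-<0<t_+$. Since $f$ has no zeros of $f(z)-z$ outside $\mathbb D(0,r_0/2)$ (Rouch\'e), the first alternative means the trajectory leaves $K_0$. Using Step 1, I will show this forces the trajectory to have first entered $\mathbb D(0,r_0/2)$, since starting from $z_{k,s}$ it flows inward as for $f_0$. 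So in every case there is an excursion: the trajectory enters $\mathbb D(0,r_0/2)$ at time $t_1$ and later exits at time $t_2$.

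Closeness of $f$ to $f_0$ gives the lower bound on this excursion time. For $f_0$, trajectories inside $\mathbb D(0,r_0/2)$ converge to $0$, and by compact-open convergence the same holds for $f$ on any fixed time window. Hence $t_2-t_1\to+\infty$ as $f\to f_0$, and in particular $t_2-t_1>L'$ for any prescribed $L'$ once $f$ is close enough. Joining the exit point back to the entry point along an arc $\alpha$ of the circle $|z|=r_0/2$ closes the loop $C$. I will choose the arc so that $C$ is a Jordan curve, possibly after shortening the excursion to its first return; $\alpha$ has length at most $\pi r_0$.

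\textbf{Step 3: nonemptiness of $E$, and the main obstacle.} $E\neq\emptyset$ because otherwise the integral around $C$ would vanish, contradicting the large real part of the time. Plugging into the identity then gives $\sum_E\iota\in e^{i\phi}\{\mathrm{Re}\,w<-L,\ |\mathrm{Im}\,w|<M\}$, after fixing the sign by orientation: the trajectory winds positively around the region it encloses, as in the model. Here $M$ comes from Step 1 and $L$ can be taken as large as we like, so $M\ll L$.

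The main obstacle is Step 2. One has to check carefully that every way of failing the definition of $\mathcal{WB}_\phi$ really yields one clean entry–exit excursion whose closing arc can be kept in the annulus with uniformly bounded length. The delicate case is a trajectory that merely skims near $\partial\mathbb D(0,r_0/2)$. There I would enlarge the disk slightly, replacing $r_0/2$ by a radius chosen generically in $[r_0/3,2r_0/3]$, so that crossings are transversal and the model-flow comparison applies.
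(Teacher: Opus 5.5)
The paper does not prove this lemma; it is quoted from Oudkerk. So I am judging your argument on its own terms. Your strategy is the right one: close a long excursion of a trajectory of \eqref{vfield} inside $\mathbb D(0,r_0/2)$ by an arc of $|z|=r_0/2$, and read $\sum_E\iota$ off as a travel time via the residue theorem. Your reduction of every failure of $\phi$-well-behavedness to an entry--exit excursion is also fine. However, the sign step is wrong as stated, and the lower bound on the excursion time is not proved.

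\textbf{The sign.} Let $g=ie^{i\phi}(f(z)-z)$, and let $w(C)=\pm1$ be the winding number of $C$ oriented along the flow. Then $\oint_C dz/g=-2\pi\,w(C)\,e^{-i\phi}\sum_{\sigma\in E}\iota(f,\sigma)$; note the factor is $e^{-i\phi}$, not $e^{i\phi}$ as in your display. So you land in $S(\phi)$ only when $w(C)=+1$. Your justification ``as in the model'' fails: for $\dot z=e^{i\phi}iz^{\nu+1}$, the trajectories through $z_{k,+}$ run clockwise around the attracting petals. A concrete case: take $\nu=1$, $\phi=0$, and $f(z)=z+z^2-\delta^2$ with small $\delta>0$. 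The trajectory through $z_{1,+}=-r_0$ is a closed orbit, so $f\notin\mathcal{WB}_0$, and it is traversed clockwise around $-\delta$. Closing its inner arc by the short arc of $|z|=r_0/2$ gives $E=\{-\delta\}$ with $\iota(f,-\delta)=1/(2\delta)\to+\infty$, which lies in the wrong half-plane.

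The repair is easy. Of the two arcs of $|z|=r_0/2$, both contribute $O(r_0^{-\nu})$, so you may choose the one for which the enclosed region lies to the left of the trajectory. Equivalently, pass to the complement of $E$ among the $\nu+1$ fixed points, using $\sum_{\mathrm{all}}\iota(f,\sigma)=\iota(f_0,0)+o(1)$ as in Lemma \ref{asm-iota}. The complement is nonempty because this total is bounded.

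\textbf{The length of the excursion.} This is the gap you flagged yourself, but your proposed remedy does not address it. The bound $t_2-t_1\to+\infty$ does not follow from continuous dependence on a fixed time window. If the entry point is near a point where the $f_0$-flow is tangent to $|z|=r_0/2$, the $f_0$-trajectory stays arbitrarily close to the circle for a while, so closeness of trajectories does not rule out an immediate exit. Choosing a generic radius cannot help, because the model field is tangent to \emph{every} circle $|z|=r$ at $2\nu$ points.

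The missing ingredient is the geometry at these tangencies. In the coordinate $w=-1/(\nu z^\nu)$, the model flow is translation in the direction $ie^{i\phi}$, and $\mathbb D(0,r_0/2)$ becomes the exterior of the round disk $|w|\le 2^\nu/(\nu r_0^\nu)$. A line meets a closed disk in an interval. Hence trajectories can make short excursions out of $\mathbb D(0,r_0/2)$ but never short dips into it.

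This is a $C^2$-open property near the circle (use the chart $w_f=\int dz/(f(z)-z)$ on the annulus), so it persists for $f$ near $f_0$. Combine it with a compactness argument over entry points. A limit of entry points for $f_n\to f_0$ is either a transversal entry or a tangency point for $f_0$, and the forward $f_0$-orbit from such a point stays in the open disk for all $t>0$. This yields $t_2-t_1>L'$ uniformly once $f$ is close to $f_0$.

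With these two repairs your argument proves the lemma, with $M\asymp r_0^{-\nu}$ and $L$ as large as desired.
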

 
  \begin{proof}[Proof of Proposition \ref{sp}]  For $f$ close to $f_0$,   there are 
  $\nu+1$ fixed points of $f$ in $K_0$. 	Let ${\rm Fix}(f)$ be the set of these fixed points, and let
  	$$X(f)=\Big\{\sum_{\sigma\in E} \iota(f, \sigma); \ \emptyset\neq  E\subset {\rm Fix}(f) \Big\}.$$
  	It clear that the cardinality  $\# X(f)<2^{\nu+1}$.
  	
  		\begin{figure}[h]
  		\begin{center}
  			\includegraphics[height=5cm]{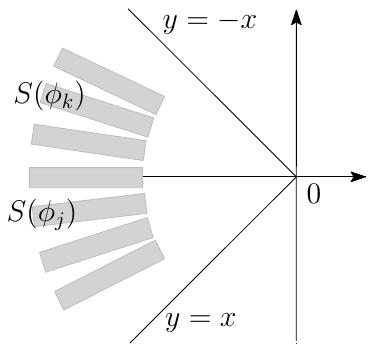}
  		\end{center}
  		\caption{}
  		\label{fig:selection}
  	\end{figure}
  	
  	Let $N=2^{\nu+1}+\nu$.
  	Choose angles $\phi_1, \cdots, \phi_{N}\in (-\pi/4, \pi/4)$ so that $S(\phi_k)$ (defined by Lemma \ref{non-wb}) lies to the left of $\{|y|=-x\}$  and $S(\phi_k)\cap S(\phi_j)=\emptyset$ for $k\neq j$, see Figure \ref{fig:selection}.  For any sequence $(f_n)_n$ so that  $f_n\rightarrow f_0$ uniformly in $K_0$, there exist a subset $\Theta\subset \{\phi_1, \cdots, \phi_{N}\}$ with cardinality $\# \Theta=\nu+1$ and a subsequence  $(f_{n_j})_j$ so that 
  	$$X(f_{n_j})\cap S(\phi)=\emptyset, \ \forall  j\geq 1, \  \forall \phi\in \Theta.$$
  	By Lemma \ref{non-wb}, $f_{n_j}\in\bigcap_{\phi\in \Theta} \mathcal{WB}_\phi$. Let $\mathbf G_{n_j}$ be the gate vector of $f_{n_j}$. Since there are only finitely many admissible gate vectors for  a fixed $\nu$, we may choose a further subsequence of   $(f_{n_j})_j$ so that $\mathbf G_{n_j}\equiv \mathbf G$. Since $\# \Theta=\nu+1$, by passing to a further subsequence, there is a $
  	\phi\in \Theta$ so that for  each $k\in \mathbb Z_\nu$ with $\g(k)\neq *$,
  	$${\rm Re}(e^{-i\phi} \tau_{k,\phi}(f_{n_j}))\rightarrow -\infty  \text{ as } j\rightarrow \infty.$$
  	The proof is completed.
  	\end{proof}
 
 %	\vspace{5pt}
% \noindent \textbf{Compact-open topology.}	

 \begin{pro}   [\cite{Ou99} Proposition 2.4.13; \cite{Ou02} Proposition 6.4] \label{cont-c}  Suppose for some   $\phi\in (-\pi/4, \pi/4)$, we have a sequence  $(f_n)_n$ in $\mathcal{WB}_\phi(\mathbf G)$ converging to   $f_0$,  and for each $k\in \mathbb Z_\nu$ with $\g(k)\neq *$,
 	$${\rm Re}(e^{-i\phi} \tau_{k,\phi}(f_{n}))\rightarrow -\infty  \text{ as } n\rightarrow \infty\footnote{This condition is necessary to guarantee the Hausdorff convergence of the  fundamental domains, see Figure 2.7 in \cite{Ou99}.}.$$  
 	Then for all $k\in \mathbb Z_\nu$ and $s\in \{\pm\}$,  the following hold.
 	
 	1. $S_{k, s, f_n, \phi}\rightarrow S_{k, s, f_0, \phi}$ in Hausdorff topology \footnote{A sequence of  compacta $(E_n)_n$ converges to a compactum $E$ in {\it Hausdorff topology}  if 
 		$d_{H}(E_n, E)\rightarrow 0$, where $d_H$ is the Hausdorff distance  defined by
 		$$d_H(A, B)=\max\Big\{ \max_{a\in A}\min_{b\in B} d(a,b),  \  \max_{b\in B}\min_{a\in A} d(a,b)\Big\},$$
 		and $d(a,b)$ is the Euclidean or spherical distance  depending on the situation.}.
 	
 	2.  We have the Hausdorff convergence 	%$f\mapsto \overline{U_{k, s, f, \phi}}$ is Hausdorff continuous on $\mathcal{WB}_\phi(\mathbf G)$.  
 	$$\overline{U_{k, s, f_n, \phi}}\rightarrow \begin{cases}\overline{U_{k, +, f_0, \phi}}\cup \overline{U_{j, -, f_0, \phi}} &\text{ if  } \g(k)=j, s=+; \\ 
 		\overline{U_{k, -, f_0, \phi}}\cup \overline{U_{j, +, f_0, \phi}} &\text{ if  } \g(j)=k, s=-;\\
 	\overline{U_{k, s, f_0, \phi}} &\text{ if  } \g(k)=*.  \end{cases}$$
 	
 %	2.  $f\mapsto {S_{k, s, f, \phi}}$ is Hausdorff continuous on $\mathcal{WB}_\phi(\mathbf G)$.
 	
 	3. $\Phi_{k, s, f_n, \phi}: U_{k, s, f_n, \phi}\rightarrow \mathbb C$ converges to 
 	$\Phi_{k, s, f_0, \phi}: U_{k, s, f_0, \phi}\rightarrow \mathbb C$  in the compact-open topology.
 	
 %	4. $f\mapsto (\Phi_{k, s, f, \phi}:  {\rm Int}(S_{k, s, f, \phi})\rightarrow \mathbb C)$   is   continuous on $\mathcal{WB}_\phi(\mathbf G)$ w.r.t.  compact-open topology.
 \end{pro}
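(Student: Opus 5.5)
The plan is to treat the three statements in order, with the vector field \eqref{vfield} as the basic tool. First I pass to the approximate Fatou coordinate $w=-1/(\nu z^{\nu})$ on each sector of $\mathbb D(0,r_0)$. In this coordinate the field $e^{i\phi}i(f_0(z)-z)$ is $e^{i\phi}i\,(1+o(1))$ near $0$. Since $f_n\to f_0$ uniformly on $K_0$, the fields $e^{i\phi}i(f_n(z)-z)$ converge uniformly on $K_0$ to that of $f_0$, and so do their derivatives. Outside $\mathbb D(0,r_0/2)$ the $f_0$-field has no zeros. Continuous dependence of ODE solutions on parameters then gives $\gamma_{k,s,f_n,\phi}\to\gamma_{k,s,f_0,\phi}$ uniformly on compact time intervals. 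Because $f_n\in\mathcal{WB}_\phi$, the time each trajectory spends outside $\mathbb D(0,r_0/2)$ is a single interval $[t_-,t_+]$, and these intervals are uniformly bounded. So the only question for Hausdorff convergence of $\overline{\ell_{k,s,f_n,\phi}}$ and of $S_{k,s,f_n,\phi}$, whose boundary is made of $\ell$, $f_n^{\pm1}(\ell)$ and the two end fixed points, is the behaviour of the tails inside the small disk.

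The second step controls these tails. For a single petal ($\g(k)=*$), both tails in the $w$-coordinate are almost straight rays in direction $\pm e^{i\phi}i$ heading to $\infty$, which means toward $0$. I would show they cannot turn back before reaching $|w|\gg1$, where all fixed points of $f_n$ lie for large $n$. Hence the tails lie in $\mathbb D(0,\epsilon_n)$ with $\epsilon_n\to0$.

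For an open gate $\g(k)=j$, the strip between $\ell_{k,+,f_n,\phi}$ and $f_n(\ell_{k,+,f_n,\phi})$ passes through the gate into the $j$-th repelling region. Here I use the hypothesis ${\rm Re}(e^{-i\phi}\tau_{k,\phi}(f_n))\to-\infty$. Via $\Phi_{j,-}=\Phi_{k,+}+\tau_k$, it says that the part of $S_{k,+,f_n,\phi}$ lying beyond a fixed compact part of $U_{k,+,f_0,\phi}$ has Fatou coordinate with real part (rotated by $e^{-i\phi}$) tending to $\infty$. Points whose Fatou coordinate leaves every compact set must converge to $0$, by comparison with $w$. This gives statement 1. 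Statement 2 then follows: $\overline{U_{k,+,f_n,\phi}}$ is the union of the forward images of $S_{k,+,f_n,\phi}$ together with the corresponding backward images in the $j$-th repelling petal, and the same Fatou-coordinate estimate shows that these images converge to $\overline{U_{k,+,f_0,\phi}}\cup\overline{U_{j,-,f_0,\phi}}$.

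For statement 3, I use a normal-families argument. Let $K\subset U_{k,s,f_0,\phi}$ be compact. By statement 2 and the interior convergence of the Jordan domains, $K\subset U_{k,s,f_n,\phi}$ for large $n$. Comparison with $w$ gives uniform bounds on $\Phi_{k,s,f_n,\phi}$ on compacta, so $(\Phi_{k,s,f_n,\phi})_n$ is a normal family there. Any limit $\Psi$ satisfies $\Psi\circ f_0=\Psi+1$ and $\Psi(z_{k,s})=0$. It is nonconstant, since $\Phi_n(f_n(z_{k,s}))=1$, so by Hurwitz it is univalent. Uniqueness of the Fatou coordinate up to an additive constant, together with the normalization, forces $\Psi=\Phi_{k,s,f_0,\phi}$, so the whole sequence converges.

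The main obstacle is the tail control in the open-gate case. There one has to turn the phase condition into the geometric statement that the portion of the fundamental domain crossing the gate shrinks to $0$. The danger is that it instead converges to an arc joining the two petals through the gate region; this is exactly the phenomenon excluded by the footnoted condition. I would first try an explicit comparison of $\Phi_{k,+,f_n,\phi}$ with the model coordinate for $z\mapsto z+\prod(z-\sigma_i)$. The approach is to integrate $dz/(f_n(z)-z)$ along $\ell$, expressing the gate crossing by residues, as in Proposition \ref{phase}.
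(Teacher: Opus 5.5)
The paper does not prove this proposition; it quotes it from Oudkerk \cite{Ou99,Ou02}, so your outline has to stand on its own. The architecture is reasonable: ODE continuity on bounded time intervals, then tail control, then a normal-family argument for the Fatou coordinates. However, the step that carries all the content is not established, and the principle you invoke for it, ``points whose Fatou coordinate leaves every compact set must converge to $0$, by comparison with $w$'', is not valid as stated.

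Comparison with $w$ controls $\Phi_{k,+,f_n,\phi}$ only on the component of $U_{k,+,f_n,\phi}\cap\{|z|\ge\epsilon\}$ containing $f_n(z_{k,+})$. It controls $\Phi_{j,-,f_n,\phi}=\Phi_{k,+,f_n,\phi}+\tau_{k,\phi}(f_n)$ only on the component near $z_{j,-}$. Once a tail of $\ell_{k,+,f_n,\phi}$ or $f_n(\ell_{k,+,f_n,\phi})$ has passed through the gate region (scale $\ll\epsilon$), it can re-enter $\{\epsilon\le|z|<r_0/2\}$ in another component. There the coordinate differs from the $w$-model by a period $\pm 2\pi i\sum_{\sigma\in E}\iota(f_n,\sigma)$, and a large Fatou coordinate says nothing about $|z|$.

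This is exactly what happens when ${\rm Re}(e^{-i\phi}\tau_{k,\phi}(f_n))$ stays bounded, already for $\nu=1$. The tails of $\ell_{k,+,f_n,\phi}$ and of $S_{k,+,f_n,\phi}$ come back out inside $U_{j,-,f_0,\phi}$ at a definite distance from $0$, while $\Phi_{k,+,f_n,\phi}$ is unbounded along them; this is the phenomenon of the footnote. Your use of the hypothesis also fails to single out the tails. For instance, $\Phi_{j,-,f_n,\phi}(z_{k,+})=\tau_{k,\phi}(f_n)$ is unbounded although $z_{k,+}$ stays at distance $r_0$ from $0$. So the principle, read literally, proves too much.

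The same gap is already present in your closed-gate case. Near-straightness in $w$ shows a tail cannot turn back before reaching $|z|=\epsilon$. But $\phi$-well-behavedness only forbids leaving $\mathbb D(0,r_0/2)$, so nothing you say excludes later excursions into $\{\epsilon\le|z|<r_0/2\}$ along nearly straight chords of the $w$-annulus.

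What is missing is a localization argument for the pieces of the curves $\overline{\ell_{m,\pm,f_n,\phi}}$ in that annulus. It would use that trajectories do not cross, that the flow is nearly parallel in $w$ on each sector, that each petal lies on a fixed side of each of its boundary arcs, and that $\overline{U_{m,\pm,f_n,\phi}}\setminus\mathbb D(0,r_0/2)$ is close to its $f_0$-counterpart. Its conclusion must be that any re-entry occurs in a component where the coordinate is the $w$-model shifted by such a period. Only then can the hypotheses ${\rm Re}(e^{-i\phi}\tau_{m,\phi}(f_n))\to-\infty$, read through Proposition \ref{phase} as statements about index sums, push the re-entering pieces into $\mathbb D(0,\epsilon)$.

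Your last paragraph names this residue computation but does not carry it out. Your statements 2 and 3 are derived from this step (``the same Fatou-coordinate estimate''), so they inherit the gap.

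There are also two smaller points in statement 3. First, $K\subset U_{k,s,f_n,\phi}$ for large $n$ needs Hausdorff convergence of the boundary curves, not just of the closures $\overline{U_{k,s,f_n,\phi}}$. Second, $z_{k,s}$ lies on $\partial U_{k,s,f_0,\phi}$, so the normalization of the limit $\Psi$ should be made at the interior point $f_0^{\pm1}(z_{k,s})$.
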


	\vspace{5pt}
\noindent \textbf{Lifted Ecalle map.}  For each $k\in \mathbb Z_\nu$, the $k$-th {\it immediate parabolic basin} $\Omega^{(k)}_{f_0}$  of  $f_0$ at $0$ is   the connected component of $\bigcup_{l\geq0}f_0^{-l}(U_{k, +, f_0, \phi})$ containing  $U_{k, +, f_0, \phi}$.  %A  $\Omega^{(k)}_{f_0}\subset {\rm \mathcal D}(f_0)$ containing $U_{k, +, f_0, \phi}$ is called the $k$-th \emph{immediate parabolic basin} of  $(f_0,0)$.
The map $\Phi_{k, +, f_0, \phi}: U_{k, +, f_0, \phi}\rightarrow \mathbb C$  extends holomorphically to $\Omega^{(k)}_{f_0}$ by the equality $\Phi_{k, +, f_0, \phi}\circ f_0=\Phi_{k,+, f_0, \phi}+1$.
The  inverse  of $\Phi_{k, -, f_0, \phi}: U_{k, -, f_0, \phi}\rightarrow \mathbb C$ has a maximal analytic continuation $\Psi_{k, -, f_0, \phi}$ such that $\Psi_{k, -, f_0, \phi}({w}+1)=f_0\circ  \Psi_{k, -, f_0, \phi}(w)$ if one side is defined. 
For   $r\in \mathbb R$, define two half planes:
 $$\mathbb H^u(r):=\{ w\in \mathbb C; \ {\rm Im}(w)>r\}, \ \mathbb H^l(r):=\{ w\in \mathbb C; \ {\rm Im}(w)<r\}.$$

Note that  $\mathbb H^u(r_0)\subset \Psi_{k, -, f_0, \phi}^{-1}(\Omega^{(k)}_{f_0})$ and $\mathbb H^l(-r_0)\subset\Psi_{k, -, f_0, \phi}^{-1}(\Omega^{(k-1)}_{f_0})$ for some $r_0>0$.  Let  $\widetilde{B}^{(k,u)}_{f_0}$ be the component of $\Psi_{k, -, f_0, \phi}^{-1}(\Omega^{(k)}_{f_0})$ containing $\mathbb H^u(r_0)$, and  let $\widetilde{B}^{(k,l)}_{f_0}$ be the component of $\Psi_{k, -, f_0, \phi}^{-1}(\Omega^{(k-1)}_{f_0})$ containing $\mathbb H^l(-r_0)$. 
Clearly both $\widetilde{B}^{(k,u)}_{f_0}$ and $\widetilde{B}^{(k,l)}_{f_0}$ are invariant under the translation
 $T_1(w)=w+1$.
%\begin{itemize}
%	\item  
	%It is 
	%	\item  
	% It is invariant under the translation $T_1(\widetilde{B}^{(k,l)}_{f_0})=\widetilde{B}^{(k,l)}_{f_0}$.
%	\end{itemize}

We define two maps $\widetilde{\mathcal{E}}^{(k,u)}_{f_0}:  \widetilde{B}^{(k,u)}_{f_0}\to\mathbb C$ and $\widetilde{\mathcal{E}}^{(k,l)}_{f_0}:  \widetilde{B}^{(k,l)}_{f_0}\to\mathbb C$  by
$$\widetilde{\mathcal{E}}^{(k,u)}_{f_0}=\Phi_{k, +, f_0, \phi} \circ  \Psi_{k, -, f_0, \phi}, \ \widetilde{\mathcal{E}}^{(k,l)}_{f_0}=\Phi_{k-1, +, f_0, \phi} \circ  \Psi_{k, -, f_0, \phi}.$$
They satisfy that $\widetilde{\mathcal{E}}^{(k,*)}_{f_0}(w+1)=\widetilde{\mathcal{E}}^{(k,*)}_{f_0}(w)+1$ and $\widetilde{\mathcal{E}}^{(k,*)}_{f_0}(w)=w+c_{k,*}+o(1)$ when $|{\rm Im }(w)|$ is large, where $c_{k,*}\in \mathbb C$ is a constant, $*\in \{u, l\}$.
%It holds that $\t{\EEE}^{(k,u)}_{f_0}(\t{w}+1)=\t{\EEE}_{f_0}(\t{w})+1$ for $\t{w}\in\t{B}^{(k,u)}_{f_0}$. Hence one obtains a well-defined map
%\[\EEE_{f_0}^{(k,u)}=\pi\circ\t{\EEE}_{f_0}^{(k,u)}\circ \pi^{-1}:B^{(k,u)}_{f_0}\to \mathbb C^*\]
%via $\pi(\t{w}):=e^{2\pi i\t{w}}$ with $B^{(k,u)}_{f_0}:=\pi(\t{B}^{(k,u)}_{f_0})$. Moreover this map extends to $0$ holomorphically by defining $\EEE_{f_0}^{(k,u)}(0)=0$.

The maps $\widetilde{\mathcal{E}}^{(k,u)}_{f_0}$   and $\widetilde{\mathcal{E}}^{(k,l)}_{f_0}$ are called the {\it lifted Ecalle maps}.
% or {\it horn maps}.

  If  $f\in \mathcal{WB}_\phi(\mathbf G)$  is sufficiently close to  $f_0$, we can define $\widetilde{\mathcal{E}}^{(k,u)}_{f}$   and $\widetilde{\mathcal{E}}^{(k,l)}_{f}$ in the same way.  Let $w=\Phi_{k, -, f, \phi}(z)$ for $z\in U_{k,-, f, \phi}$.
  If ${\rm Im}(w)>\eta$ (resp. ${\rm Im}(w)<-\eta$) for some large $\eta>0$, then we can find a minimal   integer $m\geq 0$  so that $f^m(z)\in U_{k, +, f, \phi}$  (resp.  $f^m(z)\in U_{k-1, +, f, \phi}$), define 
  $$\E_{f}^{(k,*)}(w):=
   \begin{cases}  \Phi_{k, +, f, \phi}(f^m(z))-m, &\text{ if  } *=u;\\
  	  \Phi_{k-1, +, f, \phi}(f^m(z))-m, &\text{ if  } *=l.
  \end{cases}$$
 
It satisfies $\E_{f}^{(k,*)}(w+1)=\E_{f}^{(k,*)}(w)+1$. Hence $\E_{f}^{(k,u)}$ can extend to the entire half-plane $\mathbb H^u(\eta)$,   and $\E_{f}^{(k,l)}$ can extend to the entire half-plane $\mathbb H^l(-\eta)$. They have the expression $\widetilde{\mathcal{E}}^{(k,*)}_{f}(w)=w+c_{k,*}(f)+o(1)$ when $|{\rm Im}(w)|$ is large, where $c_{k,*}(f)\in \mathbb C$ is a constant.

By Proposition \ref{cont-c}  and using the same argument as that of \cite[Lemma 3.4.5]{Ou99},  we get
 
  \begin{pro}    \label{ecalle0}
  	Suppose for some   $\phi\in (-\pi/4, \pi/4)$, we have a sequence  $(f_n)_n$ in $\mathcal{WB}_\phi(\mathbf G)$ converging to   $f_0$,  and for each $k\in \mathbb Z_\nu$ with $\g(k)\neq *$,
  	$${\rm Re}(e^{-i\phi} \tau_{k,\phi}(f_{n}))\rightarrow -\infty  \text{ as } n\rightarrow \infty.$$

  	 % Let $k\in \mathbb Z_\nu$ and let $\mathbf G$ be an admissible gate vector.   
  	    Then the lifted Ecalle map $\E_{f}^{(k,*)}$ converges uniformly to $\E_{f_0}^{(k,*)}$     in  $\mathbb H^u(\eta)$ or $\mathbb H^l(-\eta)$  for large $\eta>0$.
\end{pro}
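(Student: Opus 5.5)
We prove uniform convergence of the lifted Ecalle maps $\E_{f_n}^{(k,u)}\to\E_{f_0}^{(k,u)}$ on $\mathbb H^u(\eta)$; the case of $\mathbb H^l(-\eta)$ is symmetric. The plan follows \cite[Lemma 3.4.5]{Ou99} and has two parts: control of the maps on compact pieces, and control of the error term high up in the half plane.

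First we reduce to a compact region. Since $\E_f^{(k,u)}(w+1)=\E_f^{(k,u)}(w)+1$ for $f=f_n$ and $f=f_0$, the difference $\E_{f_n}^{(k,u)}(w)-\E_{f_0}^{(k,u)}(w)$ is $1$-periodic. It therefore suffices to prove uniform convergence on the strip $\{0\le {\rm Re}(w)\le 1,\ {\rm Im}(w)>\eta\}$.

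On this strip we set $g_n(w)=\E_{f_n}^{(k,u)}(w)-w$. This function is $1$-periodic and holomorphic, and $g_n(w)=c_{k,u}(f_n)+o(1)$ as ${\rm Im}(w)\to+\infty$. Hence $g_n$ descends via $\pi(w)=e^{2\pi i w}$ to a holomorphic function $h_n$ on a punctured disk $\mathbb D(0,e^{-2\pi\eta})\setminus\{0\}$. Since $h_n$ is bounded near $0$, it extends holomorphically to the whole disk. Define $h_0$ in the same way from $f_0$.

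By the maximum principle, uniform convergence $h_n\to h_0$ on $\overline{\mathbb D}(0,e^{-2\pi\eta'})$ for some $\eta'<\eta$ implies uniform convergence on the smaller disk. This in turn gives uniform convergence of $\E_{f_n}^{(k,u)}$ on all of $\mathbb H^u(\eta)$, including near ${\rm Im}(w)=+\infty$. So the task reduces to uniform convergence of $g_n$ on the compact segment $\{{\rm Im}(w)=\eta',\ 0\le{\rm Re}(w)\le1\}$, together with a uniform bound on $g_n$, which is needed to apply the maximum principle to the extended functions.

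For the compact segment, fix $w$ with ${\rm Im}(w)=\eta'$ and let $z_n=\Psi_{k,-,f_n,\phi}(w)$. This point is obtained from a point of the fundamental domain $S'_{k,-,f_n,\phi}$ by iterating $f_n$ a bounded number of times. By Proposition \ref{cont-c}(1),(3), the fundamental domains converge in the Hausdorff sense and the Fatou coordinates converge in the compact-open topology. Hence $z_n\to z_0=\Psi_{k,-,f_0,\phi}(w)$ locally uniformly in $w$.

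Choosing $\eta'$ large, $z_0$ lies in $\Omega^{(k)}_{f_0}$, and there is a fixed $m$ with $f_0^m(z_0)$ compactly inside $U_{k,+,f_0,\phi}$. By continuity of $f_n\to f_0$ and Proposition \ref{cont-c}(2),(3), $f_n^m(z_n)$ lies in $U_{k,+,f_n,\phi}$ for large $n$, and
$$\Phi_{k,+,f_n,\phi}(f_n^m(z_n))-m\to\Phi_{k,+,f_0,\phi}(f_0^m(z_0))-m.$$
One point needs checking here: the definition uses the \emph{minimal} $m$. However, any admissible $m$ gives the same value by the functional equation, so the choice does not matter. This gives uniform convergence on the segment. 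The same compactness argument yields the uniform bound on $g_n$ on the segment, and hence, by the maximum principle, on the whole disk.

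The main obstacle is ensuring that $\eta$ (and $\eta'$) can be chosen independently of $n$, so that the definition of $\E_{f_n}^{(k,u)}$ is valid on a common half plane. This is exactly where the hypothesis ${\rm Re}(e^{-i\phi}\tau_{k,\phi}(f_n))\to-\infty$ enters, through Proposition \ref{cont-c}. When the gate is open, the double petal $U_{k,+,f_n,\phi}$ converges to $\overline{U_{k,+,f_0,\phi}}\cup\overline{U_{j,-,f_0,\phi}}$, so the repelling petal does not swallow orbits starting with ${\rm Im}(w)>\eta$. I would verify this by showing that the region $\Psi_{k,-,f_n,\phi}(\mathbb H^u(\eta))$ enters $U_{k,+,f_n,\phi}$ after boundedly many steps, using the Hausdorff convergence of $S_{k,\pm,f_n,\phi}$.
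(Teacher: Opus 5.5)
Your argument is correct in outline and is essentially the route the paper intends. The paper gives no proof beyond citing Proposition \ref{cont-c} and the argument of Oudkerk's Lemma 3.4.5. Your reduction is the standard way to carry that out: periodicity, the removable singularity at ${\rm Im}(w)=+\infty$, and the maximum principle reduce the claim to convergence on a compact horizontal segment, which Proposition \ref{cont-c} supplies. The common half-plane of definition, which you only sketch, is likewise taken for granted in the paper's setup.

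One correction is needed. Your claim that every admissible $m$ gives the same value is false in general for $f_n$. The double petal $U_{k,+,f_n,\phi}$ is not forward invariant, so an orbit can leave through the gate and re-enter later with a shifted value. The claim does hold for your fixed $m$ and large $n$: in $\Phi_{k,+,f_n,\phi}$-coordinates the petal is a strip of width about $-{\rm Re}(e^{-i\phi}\tau_{k,\phi}(f_n))\to+\infty$, and such a strip cannot be crossed in $m$ steps.
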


 \section{Controlling orbits for perturbations} \label{h-e}

 % $v'$ lies further than $v'$ to $v_0$.

In this section,  let $f_0$ be a rational map with a parabolic periodic point $\zeta$.   Let   $(f_n)_n$ be a generic perturbation of $f_0$ at $\zeta$. 
 Replacing  $f_0$ and $f_n$ by some iterate, we assume  $f_0(\zeta)=\zeta$,  $f'_0(\zeta)=1$ and the multiplicity of $f_0$ at   $\zeta$ is  $\nu+1\geq 2$.   
% Since  $f_n$ is a generic perturbation  of $f$, 
  Then the $f_0$-fixed point $\zeta$ splits into $\nu+1$ distinct fixed points of $f_n$, one of which is denoted by $\zeta_n$. By changing coordinate, we assume $\zeta_n=\zeta=0$ and $f_0(z)=z+z^{\nu+1}+O(z^{\nu+2})$ near $0$.
 
 %(1).    an attracting fixed point   $\zeta_n$ and $\nu$ repelling  fixed  points  of $f_n$, or 
 
 %(2).    a repelling  fixed  point   $\zeta_n$ and $\nu$ attracting  fixed  points  of $f_n$.
 
 %By choosing subsequences, we assume  case (1) holds for all $n$ (the argument below also works for case (2)).   
  
  %The situation that $\zeta$  splits into one repelling point say $\zeta_n$ and $\nu$ attracting points  for $f_n$  follows from the same argument.
  
For each $k\in \mathbb Z_\nu$,  let $A_k$ be the immediate parabolic basin of $f_0$ at $0$,  associated with the attracting direction  $\epsilon_{k,+}=e^{\pi i (2k-1)/\nu}$.    
 
By Proposition \ref{sp} and passing to a subsequence, % if necessary,  
we assume $f_n\in \mathcal{WB}_\phi(\mathbf G)$ for some $\phi, \mathbf G$ and for all $n\geq 1$.

 % To simplify the notations, we may drop the subscript $\phi$ in the corresponding notations.

%Passing to a subsequence, we  assume $v_0\in V$ is the vertex so that $v_0(f_n)$ is  $f_n$-attracting for  all $n$. 
%Let $\prec$  be a partial order on $V$ (with respect to $v_0$).

Since $(f_n)_n$ is a generic perturbation of $f_0$ at $0$,  there are no closed gate. Hence the gate  vector $\mathbf G=(\g(k))_{k\in \mathbb Z_\nu}$ can be viewed as a bijective self-map $\g: \mathbb Z_\nu\rightarrow \mathbb Z_\nu$.  Set $\widehat{\g}=\g-1:  \mathbb Z_\nu\rightarrow \mathbb Z_\nu$.

\vspace{5pt}
\noindent \textbf{Tree of gate structure.}  The gate  structure $\mathbf G=(\g(k))_{k\in \mathbb Z_\nu}$   can be represented by a finite set of disjoint directed hyperbolic geodesics in $\mathbb{D}$, constructed as follows:
%can be represented by finitely many disjoint hyperbolic geodesics of $\mathbb D$ as follows:  
for each $k\in \mathbb Z_{\nu}$, there is a directed hyperbolic geodesic    $\ell_k$ from $\epsilon_{k,+}=e^{\pi i (2k-1)/\nu}$ to $\epsilon_{j,-}=e^{2\pi i (j-1)/\nu}$, where  $j=\g(k)$.  
% if $\mathbf G_k=*$, set $\ell_k(\mathbf G)=\emptyset$.

The tree $T = (V, E)$ associated with $\mathbf{G}$ is defined as follows:
\begin{itemize}
	\item The vertex set $V$ corresponds to the connected components of $\mathbb{D} \setminus \bigcup_{k \in \mathbb{Z}_\nu} \ell_k$. For  $v \in V$, let $X_v$ denote the corresponding component.
	\item The edge set $E$  corresponds to the geodesics $\ell_k$: an edge $e = [v, v']$ exists if and only if $\partial X_v \cap \partial X_{v'} = \ell_k$ for some $k$.
\end{itemize}

Note that $V$ consists of $\nu + 1$ vertices, 
 and  $E$ consists of $\nu$ edges.
For each $e=[v,v']\in E$,   there is a unique
$\boldsymbol k(e)\in \mathbb Z_\nu$  so that $\partial X_v\cap \partial X_{v'}=\ell_{\boldsymbol k(e)}$.
This gives a bijection $\boldsymbol k: E\rightarrow \mathbb Z_\nu$.
The inverse of  $\boldsymbol k$ is denoted by $\boldsymbol e: \mathbb Z_\nu  \rightarrow  E$.

%The tree $T=(V, E)$ of   $\mathbf G$ consists of a vertex set $V$ and an edge set $E$, such that 
%\begin{itemize}
%	\item  each vertex $v\in V$ corresponds to a % connected 
%	component $X_v$ of $\mathbb D\setminus \bigcup_{k\in \mathbb Z_\nu} \ell_k$;
	
%	\item each edge $e=[v,v']$  corresponds to a geodesic  $\ell_k$ if $\partial X_v\cap \partial X_{v'}=\ell_k$.
%\end{itemize} 
%Note that $V$ consists of $\# V=\nu+1$ vertices.  

	\begin{figure}[h]
	\begin{center}
		\includegraphics[height=4cm]{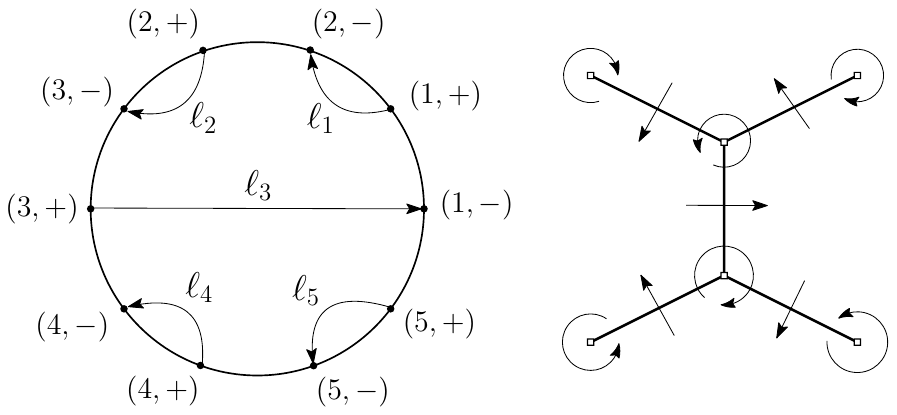}
	\end{center}
	\caption{Tree $T$ of the gate structure $\mathbf G$ in Figure \ref{fig: gate1}.
		Left: $\mathbf G$ is represented by  disjoint directed hyperbolic geodesics. Right:  the tree $T$ with orientations of edges and vertices. }
	\label{fig: tree-gate}
\end{figure}

For each edge $e\in E$, the directed geodesic $\ell_k$ induces a transversal  direction of $e$. 
%\textcolor{red}{ Let $v^{+}(e)$ (resp. $v^{-}(e)$) be the vertex of $e$ on the left (resp.  right) with respect to the direction of $e$.}
For each vertex $v\in V$, the  transversal  directions of its adjacent edges give  a natural orientation $\mathcal O(v)$ of $v$, which is either  positive cyclic order or negative cyclic order. See Figure \ref{fig: tree-gate}.
% whose  signature $\chi(v)\in \{\pm 1\}$  is
%$$\chi(v)=\begin{cases} +1 &\text{ if } \mathcal O(v) \text{ is positive cyclic order}; \\
%-1 &\text{ if } \mathcal O(v) \text{ is negative cyclic order}.
%\end{cases}$$

Since  the maps $f_n$'s  share the same gate structure,   we can mark the $\nu+1$ fixed points of $f_n$ near $0$ as $v(f_n), v \in V$.
%the $\nu+1$ fixed points of $f_n$ near $0$ can be labeled by the vertices $v \in V$, denoted as $v(f_n), v\in V$.
  The multiplier of $f_n$ at $v(f_n)$ is denoted by $\lambda_v(f_n)=f_n'(v(f_n))$.
% \textcolor{red}{ Clearly, for each $e=[v,v']\in E$,
%$$v(f_n), v'(f_n)\in \partial U_{\boldsymbol k(e), +, f_n, \phi}, \ \forall n\geq 1.$$}

\begin{lem} \label{asm-iota}
	For large $n$, 
	\begin{equation}\label{lim-iota}\sum_{v\in V} \iota(f_n,  v(f_n))=\iota(f_0,  0)+o(1). 	\end{equation}
	\end{lem}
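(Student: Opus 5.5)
The plan is to express the sum of holomorphic indices as a single contour integral and then pass to the limit under the integral sign. Fix a small radius $r\in(0,2r_0]$ such that $0$ is the only fixed point of $f_0$ in $\overline{\mathbb D(0,r)}$; this is possible since $0$ is an isolated fixed point of $f_0$. For large $n$, Rouch\'e's theorem applied to $f_n(z)-z$ and $f_0(z)-z$ on $|z|=r$ shows that $f_n$ has exactly $\nu+1$ fixed points in $\mathbb D(0,r)$, counted with multiplicity. Since the perturbation is generic, these fixed points are simple and distinct, and they are exactly the points $v(f_n)$, $v\in V$.

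Next, apply the residue theorem to the meromorphic function $1/(z-f_n(z))$ on $\mathbb D(0,r)$. Its only poles inside the disk are the points $v(f_n)$, and by \eqref{h-ind} the residue at $v(f_n)$ is precisely $\iota(f_n,v(f_n))$. Therefore
$$\sum_{v\in V}\iota(f_n,v(f_n))=\frac{1}{2\pi i}\int_{|z|=r}\frac{dz}{z-f_n(z)} .$$
Applying \eqref{h-ind} to $f_0$ with this same radius $r$ gives $\iota(f_0,0)=\frac{1}{2\pi i}\int_{|z|=r}\frac{dz}{z-f_0(z)}$.

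It remains to compare the two integrals. The map $z\mapsto z-f_0(z)$ does not vanish on the circle $|z|=r$, so by compactness $|z-f_0(z)|\ge\delta>0$ there. Since $f_n\to f_0$ uniformly on $K_0$, for large $n$ we have $|z-f_n(z)|\ge\delta/2$ on the circle, and hence
$$\frac{1}{z-f_n(z)}\to\frac{1}{z-f_0(z)}$$
uniformly on $|z|=r$. The integrals therefore converge, and the difference between the two sides is $o(1)$, which proves \eqref{lim-iota}.

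There is no real obstacle in this argument. The only points that need care are choosing the circle so that it avoids all fixed points of $f_n$ for large $n$, which the uniform lower bound $\delta/2$ guarantees, and identifying the enclosed fixed points with the labelled points $v(f_n)$.
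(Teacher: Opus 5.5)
Your proposal is correct and follows the paper's proof: write $\sum_{v\in V}\iota(f_n,v(f_n))$ as the single contour integral of $1/(z-f_n(z))$ over a circle around $0$, then pass to the limit using uniform convergence $f_n\to f_0$. The paper does this directly on $|z|=r_0$. Your version additionally spells out the residue-theorem step, the lower bound $|z-f_0(z)|\ge\delta$ on the circle, and why the enclosed fixed points are exactly the $v(f_n)$.
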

\begin{proof} Since $f_n$ converges to $f_0$  uniformly in $\overline{\mathbb D(0, r_0)}$, we have % By the convergence 
	$$\frac{1}{2\pi i }\int_{|z|=r_0}\frac{dz}{z-f_n(z)}\rightarrow \frac{1}{2\pi i }\int_{|z|=r_0}\frac{dz}{z-f_0(z)}.$$
It follows that $\sum_{v\in V} \iota(f_n,  v(f_n))\rightarrow \iota(f_0,  0)$, implying \eqref{lim-iota}.
%	\begin{equation}\label{lim-iota}\sum_{v\in V} \iota(f_n,  v(f_n))\rightarrow \iota(f_0,  0).
%	\end{equation}
%The lemma follows immediately.
	\end{proof}

In the following, for simplicity, we omit the subscript $\phi$ from the notation and write $\tau_{k, \phi}(f)$, $U_{k, \pm, f, \phi}$, and $\Phi_{k, \pm, f, \phi}$ as $\tau_{k}(f)$, $U_{k, \pm, f}$, and $\Phi_{k, \pm, f}$, respectively.

%In the following,  for simplicity, we drop the subscript $\phi$ from the notations, and write $\tau_{k, \phi}(f), U_{k, \pm, f, \phi}, \Phi_{k, \pm, f, \phi}$ as $\tau_{k}(f), U_{k, \pm, f}, \Phi_{k, \pm, f}$ respectively. 

\vspace{5pt}
\noindent \textbf{Oudkerk's Algorithm \cite[\S 13]{Ou02}.}  
Fix $k\in \mathbb Z_\nu$, and apply the following algorithm which produces a (possibly infinite) sequence 
$a_1, a_2, \cdots$ in $\mathbb Z_\nu$.

(1). Let $a_1:=k$ and $r:=1$.

(2). Replace $(f_n)_n$ by a subsequence satisfying one of the following:

\ \ \ \ \ (a). ${\rm Im}\sum_{j=1}^r \tau_{a_j}(f_n)$  is bounded as $n\rightarrow +\infty$;

\ \ \ \ \ (b).  ${\rm Im}\sum_{j=1}^r \tau_{a_j}(f_n)\rightarrow +\infty$ as   $n\rightarrow +\infty$;

\ \ \ \ \ (c).  ${\rm Im}\sum_{j=1}^r \tau_{a_j}(f_n)\rightarrow -\infty$ as   $n\rightarrow +\infty$.

(3). If we have case (a), then EXIT  the algorithm.

(4). If we have cases (b) or (c), then set 
$$a_{r+1}:=\begin{cases} \g(a_r) &\text{ for  case (b)}; \\
 \widehat{\g}(a_r) &\text{  for  case (c)}.
\end{cases}$$

%(5). Replace $r$ by $r+1$ and 

(5). GO BACK to step (2).

\begin{pro} \label{a-preperiodic}
	If   the algorithm does not  terminate at step (3), then the infinite sequence $(a_j)_{j\geq 1}$ is  
	preperiodic: there are integers $m\geq 0$ and $p\geq 1$ so that $a_{m+p+j}=a_{m+j}$ for all $j\geq 1$.
	
	Further, if $m$ and $p$ are minimal, then
	\begin{itemize}
		\item 	the edges  $\boldsymbol e(a_{m+1}), \cdots, \boldsymbol e(a_{m+p})$ share a common  vertex  say $v$,  and these edges appear  sequentially  in the orientation  $\mathcal O(v)$ of $v$;
		
		\item the  number of the edges   incident to $v$   is precisely $p$, and 
		
		\item ${\rm Re } \  \iota(f_n, v(f_n))\rightarrow +\infty  \text{ as } n\rightarrow +\infty$ \footnote{This implies that $v(f_n)$ is $f_n$-attracting for large $n$ and $\lambda_v(f_n)\rightarrow 1$ horocyclically.}.
		\end{itemize}
	\end{pro}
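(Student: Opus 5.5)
The plan is to read Oudkerk's algorithm as a walk on the tree $T$ and to convert the partial sums $\sum_{j\le r}\tau_{a_j}(f_n)$ into sums of holomorphic indices using Proposition \ref{phase}. Preperiodicity of $(a_j)$ and the three bulleted properties should then follow from the finiteness of $T$, the absence of cycles in $T$, and a sign argument.

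\emph{Step 1 (combinatorics).} For an edge $e=\boldsymbol e(k)$ let $v^u(e)$ and $v^l(e)$ be its two endpoints. Here $v^u(e)$ is the vertex whose fixed point lies in ${\rm Fix}^u_k$-side of the gate, i.e.\ the component containing $\gamma_{k,+}(+\infty)$, and $v^l(e)$ is the other one. From the geodesic picture, $\ell_k$ runs from $\epsilon_{k,+}$ to $\epsilon_{\g(k),-}$, and $\ell_{\g(k)}$ starts at $\epsilon_{\g(k),+}$, which is adjacent on the circle to $\epsilon_{\g(k),-}$. I will check that $\boldsymbol e(\g(k))$ is the edge immediately following $e$ in the orientation $\mathcal O(v^u(e))$. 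Similarly, $\ell_{\widehat\g(k)}$ starts at $\epsilon_{\g(k)-1,+}$, just before $\epsilon_{\g(k),-}$, so $\boldsymbol e(\widehat\g(k))$ is the edge following $e$ in $\mathcal O(v^l(e))$. Thus case (b) means ``turn around the upper vertex'' and case (c) means ``turn around the lower vertex''. Moreover, consecutive steps of the same type keep pivoting around the same vertex. A change of type moves the pivot to the other endpoint of the current edge, i.e.\ one step along $T$.

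\emph{Step 2 (telescoping).} By Proposition \ref{phase}, $\tau_k(f_n)=2\pi i\sum_{\sigma\in{\rm Fix}^u_k}\iota(f_n,\sigma)+c_k+o(1)$, and ${\rm Fix}^u_k$ is the set of fixed points in the branch of $T\setminus e$ containing $v^u(e)$. Take a run $a_i,\dots,a_r$ of consecutive edges around one pivot $v$. I expect the sum of their $\tau$'s to equal, up to $O(1)$, $\pm 2\pi i$ times the sum of indices over the union of the branches at $v$ cut off by those edges, together with $\iota(v)$ once a full turn is completed. In particular, a full turn around $v$ (all $\deg v$ edges) contributes $2\pi i\,\iota(f_n,v(f_n))+O(1)$, with sign fixed by $\mathcal O(v)$. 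Lemma \ref{asm-iota} will be used to swap ``branch'' and ``complementary branch'' sums at bounded cost.

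\emph{Step 3 (eventual single pivot, preperiodicity).} Suppose the walk changed pivot infinitely often. Each change reverses the sign of ${\rm Im}\,S_r$ relative to the branch just traversed. Because $T$ has no cycles, returning to a previously left pivot would require passing back through the same edge with the opposite side being infinite. Via Step 2 this forces the same quantity, ${\rm Im}$ of a branch-index sum, to tend to both $+\infty$ and $-\infty$, which is a contradiction. Hence after some $m$ the pivot is a fixed vertex $v$. The $a_j$ then cycle through the edges at $v$ in order $\mathcal O(v)$, so the minimal period is $p=\deg v$ and the edges appear sequentially, giving the first two bullets. Finiteness of $\mathbb Z_\nu$ alone does not suffice, since the state includes $S_r$; this is why the tree argument is needed.

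\emph{Step 4 (the index).} Since the algorithm never exits, ${\rm Im}\,S_{m+qp+i}$ keeps the same infinite sign for all $q$. On the other hand $S_{m+qp+i}=S_{m+i}+q\bigl(2\pi i\,\iota(f_n,v(f_n))+C\bigr)+o(1)$. Taking $q=1$ at the index where the pivot was entered, and comparing with the sign change that occurred there, should force $2\pi\,{\rm Re}\,\iota(f_n,v(f_n))\to+\infty$ rather than remaining bounded or tending to $-\infty$. This sign bookkeeping in Steps 3–4 is the main obstacle. I would first try to handle it by also using the normalization ${\rm Re}(e^{-i\phi}\tau_k)\to-\infty$ from Proposition \ref{sp}, which controls the real parts and fixes the orientation conventions.
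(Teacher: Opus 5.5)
Your Steps 1--3 are correct, and Step 3 takes a genuinely different route from the paper's.

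\emph{How your Step 3 differs.} The paper takes the first repetition $a_{l+1}=a_t$ and uses the absence of loops in $T$ to get a common vertex for $\boldsymbol e(a_t),\dots,\boldsymbol e(a_l)$. It then uses minimality of $l$ to force $s_{t-1}\neq s_t$ and propagates periodicity by induction. You instead show that the pivot crosses each edge at most once. Write $\Delta_j={\rm Im}\,S_j$ and let $s_j\in\{\pm1\}$ be its sign at infinity. A switch $s_{r+1}=-s_r$ then gives $s_{r+1}{\rm Im}\,\tau_{a_{r+1}}(f_n)=s_{r+1}\Delta_{r+1}(f_n)+s_r\Delta_r(f_n)\to+\infty$. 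So the branch cut off by $\boldsymbol e(a_{r+1})$ on the new pivot's side is horocyclically attracting. By Lemma \ref{asm-iota} its complement is horocyclically repelling, which rules out crossing back. Hence there are at most $\nu$ pivot changes, and the first two bullets follow at once. You should write this identity out; the phrase ``reverses the sign relative to the branch just traversed'' only describes it. Your version also isolates the sign change at the moment of entry into $v$, which is exactly what the third bullet needs.

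\emph{The gap: Step 4.} You leave it at ``should force'' and propose to settle it with ${\rm Re}(e^{-i\phi}\tau_k)\to-\infty$. That normalization controls a different direction and has no bearing on the sign of ${\rm Re}\,\iota(f_n,v(f_n))$. Your periodicity relation does not suffice either. For each fixed $q$, a bounded per-turn increment cannot change the sign of a quantity tending to $\pm\infty$. So ``same infinite sign for all $q$'' does not exclude ${\rm Re}\,\iota(f_n,v(f_n))$ staying bounded. Your formula also drops a sign: a full turn around $v$ contributes $2\pi i\,s\,\iota(f_n,v(f_n))+O(1)$, where $s$ is the common value of $s_r$ while pivoting at $v$.

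\emph{How to close it.} Apply your Step 3 identity at the entry into $v$. Let $t$ be the index from which the pivot is $v$ for good, and let $p=\deg v$.
If $t=1$, then $s_1{\rm Im}\sum_{r=1}^{p}\tau_{a_r}(f_n)=s_p\Delta_p(f_n)\to+\infty$.
If $t>1$, then $s_{t-1}=-s_t$, and $s_t{\rm Im}\sum_{r=t}^{t+p-1}\tau_{a_r}(f_n)=s_{t+p-1}\Delta_{t+p-1}(f_n)+s_{t-1}\Delta_{t-1}(f_n)\to+\infty$.
Compare either case with $\sum_{r=t}^{t+p-1}\tau_{a_r}(f_n)=2\pi i\,s_t\,\iota(f_n,v(f_n))+O(1)$. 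This gives $2\pi\,{\rm Re}\,\iota(f_n,v(f_n))\to+\infty$, which is exactly how the paper concludes from \eqref{sum-e}.
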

\begin{proof} First note that for any $j\geq 1$, the edges $\boldsymbol e(a_j)$ and $\boldsymbol e(a_{j+1})$ share a common vertex.
		Since the sequence $(a_j)_{j\geq 1}$ takes values in the finite set $\mathbb Z_\nu$, there is a minimal  integer $l\geq 1$ so that 
	$a_{l+1}\in \{a_1, \cdots, a_l\}$.   Suppose $a_{l+1}=a_t$ for some $1\leq t\leq l$.
	We claim that the edges  $\boldsymbol e(a_t), \cdots, \boldsymbol e(a_l)$ share a common vertex say $v$. %There is nothing to prove if $l-s=0, 1$, so we assume
	If not, then  $l\geq t+2$. Since   $\boldsymbol e(a_t)=\boldsymbol e(a_{l+1})$ and $e(a_l)$ share a common vertex,  the union of the edges  $\boldsymbol e(a_t), \cdots, \boldsymbol e(a_l)$  would form  a loop in the tree $T$, giving a contradiction. See Figure \ref{fig:loop}.

	\begin{figure}[h]
		\begin{center}
			\includegraphics[height=3.5cm]{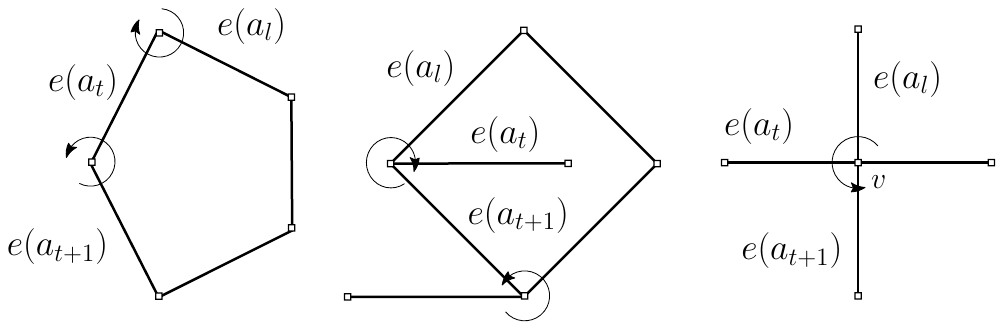}
		\end{center}
		\caption{Left two: the edges  $\boldsymbol e(a_t), \cdots, \boldsymbol e(a_l)$  form at least one loop. Right: the edges  $\boldsymbol e(a_t), \cdots, \boldsymbol e(a_l)$ share a common vertex $v$. }
		\label{fig:loop}
	\end{figure}

%	Let $v_r$ be the common vertex of $\boldsymbol e(a_{r})$ and $\boldsymbol e(a_{r+1})$ for 
%	$s\leq r\leq l$. If there are at least two different vertices in $v_s, \cdots v_{l}$, then  % If the vertices $v_s, \cdots v_{l}$ are pariwise different,

%Let $v$ be the common vertex of the edges  $\boldsymbol e(a_t), \cdots, \boldsymbol e(a_l)$.

	For each $j\geq 1$,  let 
	\begin{equation} \label{delta-j}
	\Delta_j(f_n):={\rm Im}\sum_{r=1}^j \tau_{a_r}(f_n) \in \mathbb R,
	\end{equation}
	and
	\begin{equation} \label{s-j} 
		s_j=\begin{cases}
 +1,  &\text{ if }  \Delta_j(f_n)\rightarrow +\infty \text{ as } n\rightarrow +\infty; \\
	-1,  & \text{ if }  \Delta_j(f_n)\rightarrow -\infty  \text{ as } n\rightarrow +\infty.
\end{cases}
	\end{equation} 
Clearly $s_j\Delta_j(f_n)\rightarrow +\infty$ for any $j\geq 1$. The properties that $a_{l+1}=a_t$ and  $\boldsymbol e(a_t), \cdots, \boldsymbol e(a_l)$ share a common vertex  imply that  $s_t=s_{t+1}=\cdots=s_{l}$.

If $t=1$, note that $a_{l+1}=a_1$ and $s_1=s_l$, we get
$$s_l  \Delta_{l+1}(f_n)= s_1 \Delta_{1}(f_n)+ s_l \Delta_{l}(f_n) \rightarrow +\infty.$$
This implies that $s_{l+1}=s_l$ and $a_{l+2}=a_2$. Applying the same argument inductively, we have  $a_{l+j}=a_j$ for all $j\geq 1$. 

If $t>1$, we first note that  {$s_{t-1}\neq s_t$} (if not, then $a_l=a_{t-1}$, contradicting the  minimality of $l$), which means that  $s_{t-1}=-s_t$. By the facts
\bess
s_{t-1}\Delta_{t-1}(f_n)&\rightarrow&+\infty,\\
s_t\Delta_{t}(f_n)=s_t{\rm Im}\tau_{a_t}(f_n)-s_{t-1}\Delta_{t-1}(f_n)  &\rightarrow& +\infty,
\eess
 we get $s_t{\rm Im}\tau_{a_t}(f_n)\rightarrow +\infty$. 
 Since $a_{l+1}=a_t$ and $s_t=s_l$, we have
 $$s_l  \Delta_{l+1}(f_n)= s_l \Delta_{l}(f_n)+ s_t {\rm Im}\tau_{a_t}(f_n)\rightarrow +\infty.$$ 
 This implies that $s_{l+1}=s_l$ and $a_{l+2}=a_{t+1}$. Applying the same argument inductively, we have  $s_{l+j}=s_l$ and  $a_{l+j}=a_{t-1+j}$ for all $j\geq 1$.  
 
 Note that  the edges  $\boldsymbol e(a_{t}), \cdots, \boldsymbol e(a_{l})$ appear  sequentially  in the orientation  $\mathcal O(v)$ of $v$, and they are all edges  incident to vertex $v$.

The above proof also implies that 
$$s_l {\rm Im}\sum_{r=t}^l \tau_{a_r}(f_n)=\begin{cases}  s_l  \Delta_{l}(f_n) \rightarrow +\infty &\text{ if }t=1;\\
	s_l  \Delta_{l}(f_n)+ s_{t-1}\Delta_{t-1}(f_n) \rightarrow +\infty &\text{ if }t>1.
\end{cases}$$

By  Proposition \ref{phase} and Lemma \ref{asm-iota},  
	\begin{equation}\label{sum-e}\sum_{r=t}^l \tau_{a_r}(f_n)= 2\pi  i s_l\cdot \iota(f_n,  v(f_n))+O(1).
			\end{equation}
It follows that ${\rm Re } \  \iota(f_n, v(f_n))\rightarrow +\infty$ as $n\rightarrow +\infty$.

The proof is completed by taking  $(m,p)=(t-1, l-t+1)$.
	\end{proof}

%	\begin{de}[Lavaurs map] For any $k,l\in \mathbb Z_\nu$ and any $\sigma\in \mathbb C$, the Lavaurs map $g_{\sigma}: A_k\rightarrow \mathbb C$ is defined by
%		$$g_{\sigma}:=g_{\sigma}^{(k,l)}= \Phi_{l, -, f, \phi}^{-1} \circ T_{\sigma}\circ \Phi_{k, +, f, \phi} $$
	%	\end{de}

\begin{pro} \label{ak-limit1}  Let $k\in \mathbb Z_\nu$ and let $E\subset U_{k,+, f_0} $ be  compact.  % We have the  dichotomy:
 {If the algorithm terminates at step (3), then any subsequence of $(f_n)_n$ admits a further subsequence $(f_{j_n})_n$  and a sequence of integers $(N_{n})_{n\geq 1}$ so that 
$f_{j_n}^{N_{n}}$ converges uniformly on $E$ to the Lavaurs map  
$$g_{\sigma}=\Phi_{l, -, f_0}^{-1} \circ T_{\sigma}\circ \Phi_{k, +, f_0} : A_k\rightarrow \mathbb C,$$ for some $l\in \mathbb Z_\nu$, where $T_\sigma(w)=w+\sigma$. This implies 
$$X\cap A_k\neq\emptyset, \ \forall X\in \mathcal{L}((J(f_{n}))_n).$$}
%	$$A_k\cap \liminf_{n\rightarrow \infty} J(f_n)\neq \emptyset.$$
\end{pro}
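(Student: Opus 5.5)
Suppose the algorithm exits at step (3) after producing $a_1=k,a_2,\dots,a_r$. We then have a subsequence along which $\Delta_r(f_n)={\rm Im}\sum_{j=1}^r\tau_{a_j}(f_n)$ stays bounded, while for each $j<r$ we have $s_j\Delta_j(f_n)\to+\infty$. The plan is to follow a point $z\in E$ through the gates $\boldsymbol e(a_1),\dots,\boldsymbol e(a_r)$ in turn, and to express the $f_n$-orbit in Fatou coordinates as a sequence of passages through double petals. Passing to a further subsequence, we may assume that ${\rm Re}\sum_{j}\tau_{a_j}(f_n)$ modulo $1$ converges and that the bounded quantity $\Delta_r(f_n)$ converges. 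Set $l:=\g(a_r)$. Since the gate vector has no closed gates, $U_{a_r,+,f_n}=U_{l,-,f_n}$ is a double petal.

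The core step is an induction on $j\le r$. We pick integers $N_n^{(j)}$ such that, for $z\in E$,
$$\Phi_{a_j,+,f_n}(f_n^{N^{(j)}_n}(z))=\Phi_{k,+,f_0}(z)+\sum_{i<j}\tau_{a_i}(f_n)-M^{(j)}_n+o(1)$$
holds modulo the reparametrisations coming from the lifted Ecalle maps. Passing through gate $a_j$ changes the coordinate by $\tau_{a_j}(f_n)$: inside the double petal, $\Phi_{\g(a_j),-}=\Phi_{a_j,+}+\tau_{a_j}$. The orbit then runs through the repelling petal $U_{\g(a_j),-,f_n}$ with a coordinate whose imaginary part is roughly $\Delta_j(f_n)$, which tends to $\pm\infty$ when $j<r$. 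When ${\rm Im}\to+\infty$, the lifted Ecalle map $\E^{(\g(a_j),u)}_{f_n}$ sends the orbit into $U_{\g(a_j),+}$, and $\g(a_j)=a_{j+1}$. When ${\rm Im}\to-\infty$, $\E^{(\g(a_j),l)}_{f_n}$ sends it into $U_{\g(a_j)-1,+}=U_{\widehat{\g}(a_j),+}$. This matches exactly the rule in step (4) of the algorithm. Proposition \ref{ecalle0} says that $\E_{f_n}\to\E_{f_0}$ uniformly on far half-planes, where $\E_{f_0}(w)=w+c+o(1)$. So each such transfer adds only a constant plus $o(1)$, and this error is absorbed into the imaginary-part bookkeeping without changing the signs $s_j$. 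The uniform convergence of Fatou coordinates and fundamental domains (Proposition \ref{cont-c}, valid here because ${\rm Re}(e^{-i\phi}\tau)\to-\infty$ by the Selection Principle) ensures that the finitely many iterates spent inside each fundamental domain behave as for $f_0$.

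At the final gate $a_r$, the imaginary part $\Delta_r(f_n)$ stays bounded. The orbit therefore remains in a compact part of the fundamental domain of $U_{l,-,f_n}$, and we stop there: we choose $N_n$ so that the real part of the accumulated translation, minus $N_n$, converges. This gives $\Phi_{l,-,f_n}(f_n^{N_n}(z))\to\Phi_{k,+,f_0}(z)+\sigma$ uniformly on $E$, where $\sigma$ absorbs the limits of the normalised phases and the Ecalle constants $c_{\cdot,*}$. Inverting with the convergent $\Phi_{l,-,f_n}^{-1}$ yields $f_n^{N_n}\to g_\sigma$ uniformly on $E$. Since $N_n\to\infty$, because ${\rm Re}(e^{-i\phi}\tau)\to-\infty$, the relation extends to all of $A_k$ by the functional equation.

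For the last claim, $g_\sigma$ maps $A_k$ onto a set containing points of $J(f_0)$; indeed $\Phi_{l,-}^{-1}$ on a translate covers the repelling petal, which meets $\partial A$, so it meets $J(f_0)$. Alternatively, $g_\sigma$ is nonconstant and not normal-compatible. The standard argument then runs as follows. If some neighbourhood $W\subset A_k$ missed $J(f_n)$ along a subsequence, then $f_n^{N_n}$ would map $W$ into Fatou components of $f_n$. Since $J(f_0)\subset\liminf J(f_n)$ and $g_\sigma(W)$ meets $J(f_0)$, which is approximated by repelling points of $f_n$, we get a contradiction via the Hurwitz/persistence of repelling cycles. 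Hence every Hausdorff limit $X$ meets $\overline{A_k}$, and taking $W$ compactly inside $A_k$ gives $X\cap A_k\ne\emptyset$.

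The main obstacle is the inductive bookkeeping. We must verify that the $O(1)$ errors from the Ecalle maps and the real parts of $\tau$ never push the orbit out of the far half-planes where Proposition \ref{ecalle0} applies. We also need to control the real-part drift (${\rm Re}(e^{-i\phi}\tau)\to-\infty$) so that the orbit always enters the next attracting petal in a compact region.
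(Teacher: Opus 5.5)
Your outline follows the paper's route. You track $E$ gate by gate in Fatou coordinates and add $\tau_{a_j}(f_n)$ at each crossing. The lifted Ecalle maps (near-translations by $C_{j,n}\to C_j$ on far half-planes, Proposition \ref{ecalle0}) carry the orbit to $U_{a_{j+1},+,f_n}$, with the sign of $\Delta_j(f_n)$ selecting $\g(a_j)$ or $\widehat{\g}(a_j)$. Boundedness of $\Delta_r(f_n)$ at the last gate then yields $g_\sigma$ with $l=\g(a_r)$. Your derivation of $X\cap A_k\neq\emptyset$ (openness of $g_\sigma$, persistence of repelling cycles, backward invariance of $J(f_n)$) is more explicit than the paper's one line.

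The genuine gap is the step you defer as the ``main obstacle'', and it is more than bookkeeping. The Ecalle-map relations are pointwise: each $z\in E$ reaches $U_{a_{j+1},+,f_n}$ at its own time. To get a single $N_n$ and uniform convergence on $E$, you need the composed identity
\[
\Phi_{l,-,f_n}\circ f_n^{N_n}=T_{N_n}\circ T_{\tau_{a_r}(f_n)}\circ \E_{r-1,f_n}\circ\cdots\circ T_{\tau_{a_2}(f_n)}\circ \E_{1,f_n}\circ T_{\tau_{a_1}(f_n)}\circ \Phi_{a_1,+,f_n}
\]
on all of $E$. That requires, for each $j<r$, a common time $e_{j,n}$ with $f_n^{e_{j,n}}(E)\subset U_{a_{j+1},+,f_n}$, with the orbit staying in $K_0$, after which the whole set crosses that double petal together. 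The paper proves this by contradiction. Otherwise some $f_n^{L_n}(E)$ meets both boundary arcs $\overline{\ell_{a_{j+1},+,f_n}}$ and $\overline{\ell_{\g(a_{j+1}),-,f_n}}$. But $T_{-L_n}\circ\Phi_{a_{j+1},+,f_n}\big(f_n^{L_n}(E)\cap U_{a_{j+1},+,f_n}\big)$ lies in the $\E_{j,f_n}$-image of the previous stage, a near-translate of a fixed compact set, so it has bounded width. Meanwhile the strip $\Phi_{a_{j+1},+,f_n}(U_{a_{j+1},+,f_n})$ has width comparable to $-{\rm Re}\big(e^{-i\phi}\tau_{a_{j+1}}(f_n)\big)\to+\infty$ by the Selection Principle. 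This is exactly where that hypothesis is used; your proposal names it but does not use it.

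The same issue arises at the final gate. The orbit does not sit in ``a compact part of the fundamental domain'' of $U_{l,-,f_n}$. It crosses the whole double petal $U_{a_r,+,f_n}=U_{l,-,f_n}$ in a number of steps tending to infinity. Your $N_n$, chosen by normalizing the real part of the accumulated translation, is legitimate only if all of $f_n^{N_n}(E)$ is still in that petal at time $N_n$ and lands where $\Phi_{l,-,f_n}^{-1}\to\Phi_{l,-,f_0}^{-1}$. The paper takes $N_n$ to be the last such time (its $q_{\ell-1,n}$). It uses that $e^{-i\phi}\Phi_{l,-,f_n}(U_{a_r,+,f_n})$ is nearly a vertical strip with $0$ on its right edge, together with the bounded imaginary part $\Delta_r(f_n)+O(1)$. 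From this, $N_n+\sum_j\tau_{a_j}(f_n)+\sum_j C_{j,n}$ is bounded and has a subsequence converging to $\sigma$. With your normalization you should likewise fix the landing region at a set distance to the left of that edge. With these two points supplied, your argument coincides with the paper's.
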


\begin{proof}  The argument here is due to Oudkerk \cite[\S 13.2]{Ou02}, for the readers' convenience, we include it here. Since the algorithm terminates at step (3), we get a finite sequence $a_1, \cdots, a_\ell$ for some $\ell\geq 1$.  
	For $1\leq j<\ell $, let $s_j$ be given by \eqref{s-j}.
%	$$s_j=\begin{cases} +1,  &\text{ if } {\rm Im}\sum_{l=1}^j \tau_{a_l}(f_n)\rightarrow +\infty; \\
	%	-1,  & \text{ if } {\rm Im}\sum_{l=1}^j \tau_{a_l}(f_n)\rightarrow -\infty.
	%\end{cases}$$
%	In the following, we drop the subscript $\phi$ in the notations.
	Note that   $U_{a_j, +, f_n}=U_{\g(a_j), -, f_n}$.  Fix  $R>0$, for $1\leq j<\ell$ and $f\in \{f_n, f_0\}$,  let 
	\begin{equation}\label{wr} W_{j, f}(R)=\{z\in U_{\g(a_j), -, f}; \ s_j {\rm Im } \Phi_{\g(a_j), -,  f}(z)> R\}.
		\end{equation}
%	For any $\epsilon>0$, there is an integer $N$ so that $ v(f_n)\in \mathbb D(0, \epsilon)$, $\forall v\in V$ and $n\geq N$.   We choose $R$ large enough so that for $n\geq N$, 
%	$$W_{j, f_n} \subset \begin{cases} \mathbb D(v_{a_j}^+(f_n), \epsilon) &\text{ if } s_j=+1; \\
%		\mathbb D(v_{a_j}^-(f_n), \epsilon) &\text{ if } s_j=-1.
%	\end{cases}$$

	\begin{figure}[h]
	\begin{center}
		\includegraphics[height=3cm]{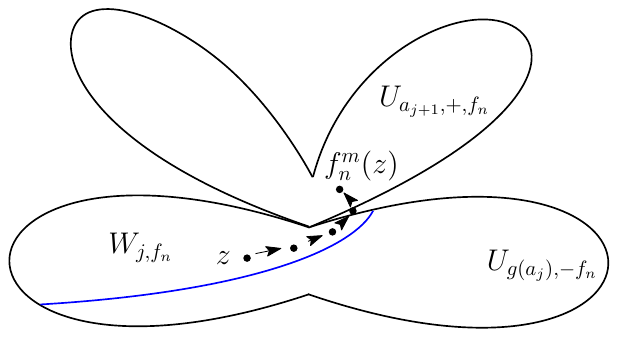}
	\end{center}
	\caption{An orbit crossing two adjacent double petals. }
	\label{fig: algo-finite}
\end{figure}
	
	 For any $1\leq j<\ell$ and any $z\in W_{j, f_n}(R)$, when $R$ is large enough (independent of $j$),
	 % for $w=\Phi_{\g(a_j), - f_n}(z)$, let
	 %$$\E_{j, f_n}(w)=\begin{cases} \widetilde{\mathcal{E}}^{(k,u)}_{f_n}(w),  &\text{ if }t=1;\\
	 %	\widetilde{\mathcal{E}}^{(k,l)}_{f_n}(w), &\text{ if }t>1.
	 %\end{cases}$$
	 there is an integer $r>0$ so that $f_n^r(z)\in U_{a_{j+1}, +, f_n}$ and $f_n^l(z)\in K_0$ for $0\leq l\leq r$.  See Figure \ref{fig: algo-finite}. For $w=\Phi_{\g(a_j), -, f_n}(z)$, we define
	\begin{equation} \label{ecalle}
	\E_{j, f_n}(w):=\Phi_{a_{j+1}, +, f_n}(f_n^r(z))-r.
	\end{equation}
	It satisfies $\E_{j, f_n}(w+1)=\E_{j, f_n}(w)+1$ and hence extends to the whole half-plane $\{w\in \mathbb C; s_j {\rm Im}(w)>R\}$.   {By the discussion before Proposition \ref{ecalle0}, 
  $\E_{j, f_n}(w)-T_{C_{j, n}}(w)\rightarrow 0$ uniformly as $s_j {\rm Im}(w)\rightarrow +\infty$}, where   $C_{j, n}$ is a constant. It satisfies $C_{j, n}\rightarrow C_j$ as $n\rightarrow +\infty$.
  
  By the same way,  we can define $\E_{j, f_0}$.

For the  compact set 
  $E\subset U_{k,+, f_0}$, by Proposition \ref{cont-c}, we have $E\subset U_{k,+, f_n}$ for large $n$. 
	Since $s_1  {\rm Im} \ \tau_{a_1}(f_n)\rightarrow +\infty$ as $n\rightarrow +\infty$, we have 
	$$s_1 {\rm Im } \ \Phi_{\g(a_1), -,  f_n}(E)=s_1  {\rm Im} \ \tau_{a_1}(f_n)+s_1 {\rm Im }\ \Phi_{a_1, -,  f_n}(E)\rightarrow +\infty.$$
This implies that	
	$E\subset W_{1, f_n}(R)$ for large $n$.  	We claim that for any large $n$, % there is a integer $N_0$ such that for each $n\geq N_0$, 
	there is a least integer $e_{1, n}\geq 1$ so that 
	\begin{equation}\label{orbit1}
	f_n^{e_{1,n}}(E)\subset U_{a_2, +, f_n} \text{ and }  f_n^{j}(E)\subset K_0, \  \forall \ 0\leq j\leq e_{1,n}.
	\end{equation}
	If not, for any large $n$, there would be an integer  $L_n$ so that $f_n^{L_n}(E)$  intersects both  $\overline{\ell_{a_2, +, f, \phi}}$ and $\overline{\ell_{\g(a_2), -, f, \phi}}$, which form the    boundary  $\partial U_{a_2, +, f_n}$.  Note that
	$$P_n:=T_{-L_n}\circ \Phi_{a_2, +, f_n} (f_n^{L_n}(E) \cap  U_{a_2, +, f_n})\subset \E_{1, f_n}\circ \Phi_{\g(a_1), -, f_n}(E).$$
 By Proposition \ref{ecalle0}, the right side tends to $\E_{1, f_0}\circ \Phi_{\g(a_1), -, f_0}(E)$ as $n\rightarrow +\infty$,  hence $P_n$ has uniformly bounded width.  {However  the width of $\Phi_{a_2, +, f_n}(U_{a_2, +, f_n})$ tends to $+\infty$  since ${\rm Re} \ e^{-i\phi} \tau_{a_2}(f_n) \rightarrow -\infty$  as $n\rightarrow \infty$ (guaranteed by Proposition \ref{sp})}.   Contradiction.
	
	This proves \eqref{orbit1}.
Therefore we have the equality
%	$$T_{-m_{1,n}}\circ \Phi_{a_2, +, f_n} \circ f_n^{m_{1,n}}=\E_{1, f_n}\circ   \Phi_{\g(a_1), -, f_n}=\E_{1, f_n}\circ  T_{\tau_{a_1}(f_n)}\circ \Phi_{a_1, +, f_n},$$
%	we get 
	\bess
	\Phi_{\g(a_2), -, f_n}\circ f_n^{e_{1, n}} &=&T_{\tau_{a_2}(f_n)}\circ  \Phi_{a_2, +, f_n} \circ f_n^{e_{1,n}}\\
	&=& T_{\tau_{a_2}(f_n)}\circ T_{e_{1,n}}\circ  \E_{1, f_n}\circ  T_{\tau_{a_1}(f_n)}\circ \Phi_{a_1, +, f_n}.
	\eess
	It follows that
	$$s_2 {\rm Im }  \Phi_{\g(a_2), -, f_n}\circ f_n^{e_{1, n}}|_E=s_2 {\rm Im} (\tau_{a_1}(f_n)+\tau_{a_2}(f_n)) +O(1)\rightarrow +\infty$$
	uniformly on the compact set $E$, hence $f_n^{e_{1, n}}(E)\subset W_{2, f_n}(R)$ for large $n$.
	
	%(there is $L$ large enough  independent of large $n$)

	By induction, there is a large integer $N_\ell>0$ such that for any $n\geq N_\ell$, there are   integers $0< e_{1,n}< \cdots< e_{\ell-1, n}$ with properties:  
	 \begin{itemize}
		\item for $1\leq j< \ell$,  $e_{j, n}$ is the  
		least integer with $f_n^{e_{j,n}}(E)\subset  U_{a_{j+1}, +, f_n}$
		(by choosing $n\geq N_\ell$, this implies  that $f_n^{e_{j,n}}(E)\subset W_{j+1, f_n}(R)$);
		
		\item  $f_n^{j}(E)\subset K_0, \  \forall \ 0\leq j\leq e_{\ell-1,n}$.
	\end{itemize} 

%For any  compact set $E$ in the $k$-th attracting basin $A_k$, we can  find integers 
%$0<m_{n,1}<m_{n,2}<\cdots < m_{n, \ell}$ so that $E\subset W_{0, f_n}$ and $f_n^{m_{n,j}}(E)\subset W_{j, f_n}$ for $1\leq j\leq \ell-1$.

For each $1\leq j\leq \ell-1$, let $q_{j,n}>e_{j, n} $ be the maximal integer so that 
$$f_n^{l}(E)\subset 
 U_{a_{j+1}, +, f_n}, \ \forall \ e_{j,n} \leq l\leq q_{j, n}.$$
%Then for $m_{n, \ell-1} \leq m\leq M_{n, \ell-1}$, 
Then on $E$, %$\Phi_{\g(a_\ell), -, f_n}\circ f_n^{M_n}$ 
one may verify that for  each $1\leq j\leq \ell-1$ and $e_{j,n} \leq l\leq q_{j, n}$, 

\begin{equation} \label{l-equality}  \Phi_{\g(a_{j+1}), -, f_n}\circ f_n^l 
=T_{l}\circ  T_{\tau_{a_{j+1}}(f_n)}\circ  \E_{j, f_n}\circ \cdots\circ  T_{\tau_{a_{2}}(f_n)}\circ   \E_{1, f_n}\circ T_{\tau_{a_1}(f_n)}\circ \Phi_{a_1, +, f_n}.
	\end{equation}
%\bess 
%&&\Phi_{\g(a_{j+1}), -, f_n}\circ f_n^l\\
%&=&T_{l}\circ  T_{\tau_{a_{j+1}}(f_n)}\circ  \E_{j, f_n}\circ \cdots\circ  T_{\tau_{a_{2}}(f_n)}\circ   \E_{1, f_n}\circ T_{\tau_{a_1}(f_n)}\circ \Phi_{a_1, +, f_n}.
%\eess
By the choice of $q_{\ell-1, n}$, $Y_n:= \Phi_{\g(a_\ell), -, f_n}\circ  f_n^{q_{\ell-1,n}}(E)$ is contained in the strip  $\Phi_{\g(a_\ell), -, f_n}(U_{a_{\ell}, +, f_n})$, and $Y_n+1$ intersects the right boundary of this  strip.  

By Proposition \ref{cont-c}, the set  
 $\Phi_{a_1, +, f_n}(E)$ is bounded  as $n\rightarrow \infty$. The imaginary part of the  right side of above equality is 
 $$ {\rm Im}\sum_{j=1}^\ell \tau_{a_j}(f_n)+O(1),$$
hence is bounded.  Note that 
$e^{-i\phi} \Phi_{\g(a_\ell), -, f_n}(U_{a_{\ell}, +, f_n})$ is almost a vertical strip, which has $0$ on its right boundary, we conclude that
  $Y_n$ is also bounded for large $n$.
It follows that $q_{\ell-1,n}+\sum_{j=1}^\ell \tau_{a_j}(f_n)$ is bounded as $n\rightarrow \infty$. By choosing a subsequence, we assume 
$$q_{\ell-1,n}+\sum_{j=1}^\ell \tau_{a_j}(f_n)+\sum_{j=1}^{\ell-1}C_{j,n}\rightarrow \sigma\in \mathbb C.$$
This   yields the uniform convergence 
$\Phi_{\g(a_\ell), -, f_n}\circ f_n^{q_{\ell-1, n}}\rightarrow T_\sigma \circ \Phi_{a_1, +, f_0}$ on $E$. 
Hence $ f_n^{q_{\ell-1, n}}\rightarrow g_\sigma:=\Phi_{\g(a_\ell), -, f_0}^{-1}\circ  T_\sigma \circ \Phi_{a_1, +, f_0}$ on $E$. It is clear that
  $g_\sigma$ can extend to $A_k$ by the equality $g_\sigma \circ f_0=f_0\circ g_\sigma$.
  % and is called the Lavaurs map. 

By the density of   repelling periodic points on Julia set and their stability, we get   
$X\supset J(f_0)$ for any $X\in \mathcal{L}((J(f_{n}))_n)$.	
It follows that 
$$A_k\cap X
%\supsetneq \liminf_{n\rightarrow \infty}(f_n^{M_n})^{-1}(J(f))\cap A
 \supsetneq  g_\sigma^{-1}(J(f_0))\cap A_k\neq \emptyset.$$
The proof is completed.  \end{proof}

\begin{pro} \label{ak-limit2}  Let $k\in \mathbb Z_\nu$ and let $E$ be a compact subset of $U_{k,+, f_0}$.   
	If the algorithm never terminates, then when $n$ is large,  the sequence $(f_n^{j})_{j\geq 1}$ converges uniformly on $E$ to the $f_n$-attracting point $v(f_n)$, where $v$ is given by Proposition \ref{a-preperiodic}.  This implies 
	$$A_k\cap X= \emptyset, \ \forall X\in  \mathcal{L}((J(f_{n}))_n).$$
\end{pro}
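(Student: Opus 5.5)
We follow the same orbit-tracking scheme as in the proof of Proposition \ref{ak-limit1}, but now the scheme never stops. It ends in the periodic cycle of gates around the vertex $v$ given by Proposition \ref{a-preperiodic}. Let $(a_j)_{j\geq1}$ be the infinite sequence, with $m,p$ minimal, and let $s_j$ be as in \eqref{s-j}. By the construction in Proposition \ref{a-preperiodic}, $s_j\Delta_j(f_n)\to+\infty$ for every $j$. We also have $s_l\,{\rm Im}\sum_{r=t}^{l}\tau_{a_r}(f_n)\to+\infty$ over one period, where $(t,l)=(m+1,m+p)$.

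\textbf{Step 1 (transient part).} Exactly as in the proof of Proposition \ref{ak-limit1}, we show by induction on $j\le m+p$ that for large $n$ there are least integers $e_{j,n}$ with $f_n^{e_{j,n}}(E)\subset W_{j+1,f_n}(R)\cap U_{a_{j+1},+,f_n}$ and with the orbit staying in $K_0$. The identity \eqref{l-equality} holds at each stage. The width argument, which uses ${\rm Re}(e^{-i\phi}\tau_{a_j}(f_n))\to-\infty$ together with Proposition \ref{ecalle0}, is unchanged. After this step, $f_n^{e_{m+p,n}}(E)$ lies in the petal $U_{a_{m+1},+,f_n}$. Its Fatou coordinate image has imaginary part $\approx \Delta_{m+p}(f_n)+O(1)$, with $s\,{\rm Im}$ tending to $+\infty$, where $s=s_{m+1}=\dots=s_{m+p}$.

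\textbf{Step 2 (trapping near $v(f_n)$).} The periodic gates $\boldsymbol e(a_{m+1}),\dots,\boldsymbol e(a_{m+p})$ are all the edges at $v$, and $v(f_n)$ is the common endpoint, on the $s$-side, of the corresponding double petals. The key claim is the following. For fixed large $R$, the set
$$\mathcal W_n=\bigcup_{i=1}^{p}\bigl\{z\in U_{\g(a_{m+i}),-,f_n}:\ s\,{\rm Im}\,\Phi_{\g(a_{m+i}),-,f_n}(z)>R\bigr\}\cup\{\text{connecting pieces}\}$$
is, together with $v(f_n)$, a forward-invariant neighborhood of $v(f_n)$ in which every orbit converges to $v(f_n)$. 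To prove this, go once around the cycle. After one turn, the imaginary coordinate changes by $s\,{\rm Im}\sum_{r=m+1}^{m+p}\tau_{a_r}(f_n)+\sum_i O(1)$, and this is $\to+\infty$ by \eqref{sum-e}, since ${\rm Re}\,\iota(f_n,v(f_n))\to+\infty$. Here we use that $\E_{j,f_n}$ is within $o(1)$ of a translation $T_{C_{j,n}}$ on $s\,{\rm Im}>R$, so the errors are uniformly bounded. Hence the imaginary parts increase at every turn, orbits never leave the region, and they go deeper toward $v(f_n)$. A cleaner route is the following. Since ${\rm Re}\,\iota\to+\infty$ means $\lambda_v(f_n)\to1$ with $v(f_n)$ attracting, $\mathcal W_n$ lies in a linearization domain of $v(f_n)$. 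The invariance of the region then gives $f_n^j\to v(f_n)$ uniformly on $f_n^{e_{m+p,n}}(E)$, and hence on $E$.

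\textbf{Step 3 (conclusion).} Fix $n$ large. Then $E$ is contained in the attracting basin of $v(f_n)$, so $E\cap J(f_n)=\emptyset$. To get $X\cap A_k=\emptyset$, apply the above to every compact $E\subset U_{k,+,f_0}$ with a fixed slight enlargement, namely a compact neighborhood $E'$ of $E$. This shows that $J(f_n)$ stays a definite distance away from $E$ for large $n$, so $X\cap U_{k,+,f_0}=\emptyset$ for every Hausdorff limit $X$. Every point of $A_k$ has a neighborhood $N$ with $f_0^{i}(N)$ compactly inside $U_{k,+,f_0}$, and $f_n^{i}\to f_0^{i}$ uniformly. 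Therefore $f_n^{i}(N)$ lies in the basin of $v(f_n)$, and by complete invariance of the Fatou set $N\cap J(f_n)=\emptyset$. This gives $A_k\cap X=\emptyset$.

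The main obstacle is Step 2. We need the $O(1)$ errors from the lifted Écalle maps to be uniform along infinitely many turns of the cycle, and we need $\mathcal W_n$ to be genuinely forward invariant, i.e. an orbit must not escape through the opposite boundary of a double petal. The first thing to try is to choose $R$ so large that each error is $<1$, and then use the monotone growth per turn coming from \eqref{sum-e}.
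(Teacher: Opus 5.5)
Your Steps 1 and 3 follow the paper's proof. The engine of your Step 2 is also the paper's: one turn around $v$ raises the imaginary Fatou coordinate by $2\pi\,{\rm Re}\,\iota(f_n,v(f_n))+O(1)$ via \eqref{sum-e}, with the \'Ecalle errors kept $\le 1/2$ on $s\,{\rm Im}\ge R$.

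The gap is the key claim of Step 2 as you formulated it. The union $\mathcal W_n$ of the $p$ exit half-petals with one common threshold $R$ is in general \emph{not} forward invariant, and it need not lie in the basin of $v(f_n)$. For the same reason the linearization-domain shortcut is unfounded. The algorithm controls the sum over a full period, not the individual gates. Let $Y_i$ be the vertex set of the branch of $T$ beyond the edge $\boldsymbol e(a_{m+i})$. Proposition \ref{phase} gives $s_*{\rm Im}\,\tau_{a_{m+i}}(f_n)=-2\pi\,{\rm Re}\,\iota(f_n,Y_i)+O(1)$. Nothing prevents some $Y_i$ from being horocyclically attracting while the period sum still tends to $+\infty$; for example, $Y_i=\{u_i\}$ with ${\rm Re}\,\iota(f_n,u_i(f_n))\to+\infty$. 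Take a point of $\mathcal W_n$ just above the threshold in the half-petal of the preceding gate. It enters $U_{a_{m+i},+,f_n}$ with bounded coordinate. It leaves with $s_*{\rm Im}\,\Phi_{\g(a_{m+i}),-,f_n}\to-\infty$, that is, at the far endpoint $u_i(f_n)$, and in the example it is captured by $u_i(f_n)$. Enlarging $R$ independently of $n$ does not cure this, because the loss at a single gate is unbounded in $n$. Monotone growth over a full turn says nothing about the intermediate gates of that turn.

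The missing ingredient is to use \emph{where} the orbit enters the cycle. Put $S_i=s_*{\rm Im}\sum_{r=m+1}^{m+i}\tau_{a_r}(f_n)=s_*\Delta_{m+i}(f_n)-s_*\Delta_m(f_n)$, with $\Delta_0:=0$. Here $s_*\Delta_{m+i}(f_n)\to+\infty$, and for $m\ge1$ the proof of Proposition \ref{a-preperiodic} gives $s_*\Delta_m(f_n)=-s_m\Delta_m(f_n)\to-\infty$. Hence all $S_i\to+\infty$ for $1\le i\le p$. So the entry gate $a_{m+1}$ chosen by the algorithm is a good base point of the cycle, whereas other gates need not be.

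Take as trapping region the single half-petal $\{z\in U_{a_{m+1},+,f_n};\ s_*{\rm Im}\,\Phi_{a_{m+1},+,f_n}(z)>R\}$. Your Step 1 already delivers $f_n^{e_{m+p,n}}(E)$ into it. From there, the coordinate at the $i$-th gate of a turn is at least $R+S_i-O(1)$. So every gate is exited at the $v(f_n)$ end, and the orbit returns with $s_*{\rm Im}\,\Phi_{a_{m+1},+,f_n}$ increased by $2\pi\,{\rm Re}\,\iota(f_n,v(f_n))-O(1)$.

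Equivalently, as the paper does, track the orbit of $E$ itself. At the $i$-th gate of the $J$-th turn its coordinate is governed by $\Delta_{m+(J-1)p+i}(f_n)$. For large $n$, $s_*\Delta_{m+(J-1)p+i}\ge s_*\Delta_{m+i}\ge R_1$, and the accumulated \'Ecalle errors are absorbed by the per-turn gain. This is what the paper's choice $R_1=R+M+\sum_{j\le m+p}(1+|{\rm Im}\,C_j|)$ and its induction $s_*{\rm Im}\,\Phi_{a_{m+1},+,f_n}\circ f_n^{t_j}\ge jR$ accomplish. With this replacement, the rest of your proof goes through.
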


  Since the algorithm never terminates at step (3),   we get an infinite sequence $(a_j)_{j\geq 1}$.
 By Proposition \ref{a-preperiodic}, there are minimal   
   integers $m\geq 0$ and $p\geq 1$ so that $a_{m+p+j}=a_{m+j}$ for all $j\geq 1$.
   Let $(s_j)_{j\geq 1}$ be the sequence given by \eqref{s-j}. 
   By the proof of Proposition  \ref{a-preperiodic}, 
   $$s_m=-s_{m+1};  \ s_l= s_{m+1}:=s_*,   \forall  l \geq m+1.$$
  
 For each $l\geq 2$, and each   $R>0$, let
 $$Y_{l,g}(R)=\{z\in U_{a_{l}, +, g}; \ s_{l-1} {\rm Im } \Phi_{a_{l}, +, g}(z)> R\}, \ g\in \{f_n, f_0\},$$ 
 and set $X_{l, f_n}(R):=Y_{l, f_n}(R)\cap W_{l, f_n}(R)$, where $W_{l, f_n}(R)$ is  given by \eqref{wr}.

 \begin{figure}[h]
 	\begin{center}
 		\includegraphics[height=4.8cm]{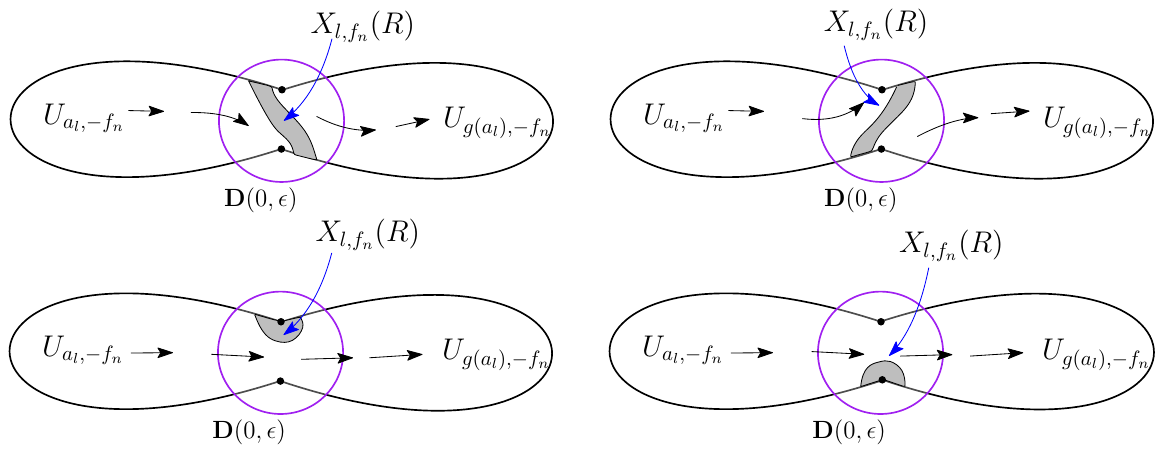}
 	\end{center}
 	\caption{The set $X_{l, f_n}(R)$ with $(s_{l-1}, s_l)=(1,-1)$ (UL), $(-1,1)$ (UR),
 		$(1,1)$ (LL) and $(-1,-1)$ (LR). }
 	\label{fig: curve-bound}
 \end{figure}
% $$X_{l, f_n}(R)=\{z\in U_{a_{l}, +, f_n}; \ s_{l-1} {\rm Im } \Phi_{a_{l}, +, f_n}(z)> R \text{ and } s_{l} {\rm Im } \Phi_{\g(a_{l}), -, f_n}(z)> R\}.$$

\begin{lem}   \label{hausdorff-limit} For any $l\geq2$ and $R>0$, the Hausdorff limit of any convergent subsequence  of $(\overline{X}_{l, f_n}(R))_{n\geq 1}$ is contained in  $\overline{Y}_{l, f_0}(R)\cup \overline{W}_{l, f_0}(R)$.
\end{lem}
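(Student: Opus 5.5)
We take a convergent subsequence $\overline{X}_{l,f_{n}}(R)\to X_\infty$ and a point $z_\infty\in X_\infty$, realized as $z_\infty=\lim z_n$ with $z_n\in X_{l,f_n}(R)$. We then split into two cases according to whether $z_\infty$ lies in the interior region where Fatou coordinates converge, or is pushed to the boundary or to the fixed points.

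The first input is the limit of the double petal. Since $U_{a_l,+,f_n}=U_{\g(a_l),-,f_n}$ is a double petal, part 2 of Proposition \ref{cont-c} gives $\overline{U_{a_l,+,f_n}}\to \overline{U_{a_l,+,f_0}}\cup\overline{U_{\g(a_l),-,f_0}}$. Hence $z_\infty$ lies in the closure of one of these two single petals of $f_0$; the two closures meet only at $0$.

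The second input is the normalization $\tau_{a_l}(f_n)=\Phi_{\g(a_l),-,f_n}-\Phi_{a_l,+,f_n}$ on the double petal. Using it, we can move between the two coordinates.

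Suppose first that $z_\infty\in U_{a_l,+,f_0}$. By part 3 of Proposition \ref{cont-c}, $\Phi_{a_l,+,f_n}(z_n)\to\Phi_{a_l,+,f_0}(z_\infty)$. Passing to the limit in $s_{l-1}{\rm Im}\,\Phi_{a_l,+,f_n}(z_n)>R$ gives $s_{l-1}{\rm Im}\,\Phi_{a_l,+,f_0}(z_\infty)\ge R$, so $z_\infty\in\overline{Y}_{l,f_0}(R)$.

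If instead $z_\infty\in U_{\g(a_l),-,f_0}$, the same argument applied to $\Phi_{\g(a_l),-,f_n}$ and the defining inequality of $W_{l,f_n}(R)$ in \eqref{wr} gives $z_\infty\in\overline{W}_{l,f_0}(R)$.

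It remains to treat $z_\infty$ on $\partial U_{a_l,+,f_0}\cup\partial U_{\g(a_l),-,f_0}$, which consists of the curves $\overline{\ell}$ and the point $0$. The point $0$ is harmless: it lies in the closure of $Y_{l,f_0}(R)$, because the region $s_{l-1}{\rm Im}\,\Phi>R$ is a half-strip whose closure accumulates at $0$ along $\gamma(\pm\infty)$.

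For a boundary point $z_\infty\neq 0$, we approximate. Points of $\ell_{a_l,+,f_0}$ are limits of interior points of $U_{a_l,+,f_0}$. Using the flow-box structure of the vector field \eqref{vfield} near $z_\infty$, which persists for $f_n$ near $f_0$, we perturb $z_n$ slightly inward to points $z_n'$ with $|z_n'-z_n|\to 0$. The images $\Phi_{a_l,+,f_n}(z_n')$ stay in a compact region: the real part is controlled and the imaginary part is comparable to that of $z_n$. Hence $z_n'$ converge to interior points, and the previous case applies to them. Taking closures then shows $z_\infty\in\overline{Y}_{l,f_0}(R)\cup\overline{W}_{l,f_0}(R)$.

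I expect this boundary step to be the main obstacle. The danger is that $\Phi_{\cdot,f_n}(z_n)$ might escape to infinity in the imaginary direction while $z_n$ converges to a point of the common boundary of the two petals, near the gate. My first attempt is to use part 1 of Proposition \ref{cont-c}, the convergence of the fundamental domains $S_{k,s,f_n}\to S_{k,s,f_0}$. Translating by the integer iterate that brings $z_n$ into a fundamental domain reduces the question to compact pieces, where Fatou coordinates converge uniformly up to the boundary arcs away from the fixed points.

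If that is insufficient, I fall back on using both constraints in $X_{l,f_n}(R)=Y_{l,f_n}(R)\cap W_{l,f_n}(R)$ simultaneously. Together with the relation $\Phi_{\g(a_l),-,f_n}=\Phi_{a_l,+,f_n}+\tau_{a_l}(f_n)$ and the divergence ${\rm Re}(e^{-i\phi}\tau_{a_l}(f_n))\to-\infty$ from Proposition \ref{sp}, this should force any point that escapes in one coordinate to converge inside the other petal. That would place it in the corresponding limit set.
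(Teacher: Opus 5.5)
Your route differs from the paper's and is viable, but it has a gap at the boundary arcs. The paper splits into seven cases, according to the behaviour of ${\rm Im}\,\tau_{a_l}(f_n)$ and the signs $(s_{l-1},s_l)$. It computes the Hausdorff limits of $\overline{Y}_{l,f_n}(R)$ and $\overline{W}_{l,f_n}(R)$ separately, each of which may absorb a whole petal of $f_0$, and then intersects them. You argue pointwise: whichever petal of $f_0$ the limit point lands in, the matching one of the two constraints defining $X_{l,f_n}(R)$ survives, so you need no information on $\tau_{a_l}(f_n)$. Your interior cases and the case $z_\infty=0$ are fine.

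\textbf{The gap.} It is exactly where you suspected.
\begin{itemize}
\item Your perturbation argument defeats itself. If $|z_n'-z_n|\to 0$, then $z_n'\to z_\infty$, which is a boundary point, not an interior one.
\item Saying ${\rm Im}\,\Phi(z_n')$ is ``comparable'' to ${\rm Im}\,\Phi(z_n)$ presupposes control of $\Phi_{a_l,+,f_n}$ near $\ell_{a_l,+,f_0}$. That control is precisely what must be proved.
\item Your first fallback needs Fatou coordinates to converge ``up to the boundary arcs''. Proposition \ref{cont-c} does not give this: part 3 is compact-open convergence on the open petal, and part 1 concerns sets only.
\item Your second fallback is too vague to check.
\end{itemize}

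\textbf{The missing idea.} Step off the arc using the dynamics.

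Let $p=z_\infty\in\ell_{a_l,+,f_0}\setminus\{0\}$. Then $f_0(p)\in U_{a_l,+,f_0}$, for three reasons: $f_0(\overline{U_{a_l,+,f_0}})\subset\overline{U_{a_l,+,f_0}}$; $f_0(\ell_{a_l,+,f_0})\cap\ell_{a_l,+,f_0}=\emptyset$; and $f_0$ is injective near $0$. Hence $f_n(z_n)$ eventually lies in a fixed compact disk inside $U_{a_l,+,f_0}$, and so inside $U_{a_l,+,f_n}$. The functional equation then gives $\Phi_{a_l,+,f_n}(z_n)=\Phi_{a_l,+,f_n}(f_n(z_n))-1\to\Phi_{a_l,+,f_0}(f_0(p))-1$.

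Set $\widehat\Phi=\Phi_{a_l,+,f_0}\circ f_0-1$. It extends $\Phi_{a_l,+,f_0}$ univalently across the arc near $p$, and it sends the arc to a curve transverse to horizontal lines. The limit above gives $s_{l-1}{\rm Im}\,\widehat\Phi(p)\ge R$, so $p\in\overline{Y}_{l,f_0}(R)$.

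On $\ell_{\g(a_l),-,f_0}\setminus\{0\}$, argue symmetrically with local inverses, using $\Phi_{\g(a_l),-,f_n}(z_n)=\Phi_{\g(a_l),-,f_n}(f_n^{-1}(z_n))+1$. This gives $p\in\overline{W}_{l,f_0}(R)$.

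With this step inserted, your proof is complete. It uses only parts 2 and 3 of Proposition \ref{cont-c} plus the functional equations. The paper's case analysis additionally yields the exact limit of $\overline{X}_{l,f_n}(R)$ in each case, which the lemma does not need.
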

\begin{proof}
	By Proposition \ref{cont-c} and the  equality $ \Phi_{\g(a_l), -, f_n}-\Phi_{a_l, +, f_n}=\tau_{a_l}(f_n)$,  passing  to a subsequences if necessary, there are $7$ cases:
	
	%	we have the Hausdorff convergence for 
	\begin{itemize}
		\item  If ${\rm Im } \tau_{a_l}(f_n)\rightarrow +\infty$ and $(s_{l-1}, s_l)=(1,1)$, then
		$$\overline{Y}_{l, f_n}(R)\rightarrow \overline{Y}_{l, f_0}(R), \ \overline{W}_{l, f_n}(R)\rightarrow \overline{W}_{l, f_0}(R)\cup \overline{U}_{a_l, +, f_0}.$$
		It follows that $\overline{X}_{l, f_n}(R)\rightarrow \overline{Y}_{l, f_0}(R)$.
		
		\item  If ${\rm Im } \tau_{a_l}(f_n)\rightarrow +\infty$ and $(s_{l-1}, s_l)=(-1,1)$, then
		$$\overline{Y}_{l, f_n}(R)\rightarrow \overline{Y}_{l, f_0}(R)\cup \overline{U}_{\g(a_l), -, f_0}, \ \overline{W}_{l, f_n}(R)\rightarrow \overline{W}_{l, f_0}(R)\cup \overline{U}_{a_l, +, f_0}.$$
		It follows that $\overline{X}_{l, f_n}(R)\rightarrow \overline{Y}_{l, f_0}(R)\cup \overline{W}_{l, f_0}(R)$.
		
		\item  If ${\rm Im } \tau_{a_l}(f_n)\rightarrow +\infty$ and $(s_{l-1}, s_l)=(-1,-1)$, then
		$$\overline{Y}_{l, f_n}(R)\rightarrow \overline{Y}_{l, f_0}(R)\cup \overline{U}_{\g(a_l), -, f_0}, \ \overline{W}_{l, f_n}(R)\rightarrow \overline{W}_{l, f_0}(R).$$
		It follows that $\overline{X}_{l, f_n}(R)\rightarrow  \overline{W}_{l, f_0}(R)$.
		
		%	By the Hausdorff convergence,  there is a large integer $N>0$ so that $f$ 

		\item  If ${\rm Im } \tau_{a_l}(f_n)\rightarrow -\infty$ and $(s_{l-1}, s_l)=(-1,-1)$, then
		$$\overline{Y}_{l, f_n}(R)\rightarrow \overline{Y}_{l, f_0}(R), \ \overline{W}_{l, f_n}(R)\rightarrow \overline{W}_{l, f_0}(R)\cup \overline{U}_{a_l, +, f_0}.$$
		It follows that $\overline{X}_{l, f_n}(R)\rightarrow  \overline{Y}_{l, f_0}(R)$.
		
		\item  If ${\rm Im } \tau_{a_l}(f_n)\rightarrow -\infty$ and $(s_{l-1}, s_l)=(1,-1)$, then
		$$\overline{Y}_{l, f_n}(R)\rightarrow \overline{Y}_{l, f_0}(R)\cup \overline{U}_{\g(a_l), -, f_0}, \ \overline{W}_{l, f_n}(R)\rightarrow \overline{W}_{l, f_0}(R)\cup \overline{U}_{a_l, +, f_0}.$$
		It follows that $\overline{X}_{l, f_n}(R)\rightarrow \overline{Y}_{l, f_0}(R)\cup \overline{W}_{l, f_0}(R)$.
		
		\item  If ${\rm Im } \tau_{a_l}(f_n)\rightarrow -\infty$ and $(s_{l-1}, s_l)=(1,1)$, then
		$$\overline{Y}_{l, f_n}(R)\rightarrow \overline{Y}_{l, f_0}(R)\cup \overline{U}_{\g(a_l), -, f_0}, \ \overline{W}_{l, f_n}(R)\rightarrow \overline{W}_{l, f_0}(R).$$
		It follows that $\overline{X}_{l, f_n}(R)\rightarrow   \overline{W}_{l, f_0}(R)$.
		
		\item    If ${\rm Im } \tau_{a_l}(f_n)\rightarrow c\in \mathbb R$, then $s_{l-1}=s_l$ and 
		\bess \overline{Y}_{l, f_n}(R)&\rightarrow& \overline{Y}_{l, f_0}(R)\cup \overline{W}_{l, f_0}(R+s_lc),\\
		\overline{W}_{l, f_n}(R)&\rightarrow& \overline{Y}_{l, f_0}(R-s_lc)\cup \overline{W}_{l, f_0}(R).
		\eess
		It follows that 
		$$\overline{X}_{l, f_n}(R)\rightarrow \overline{Y}_{l, f_0}(\min\{R, R-s_lc\})\cup \overline{W}_{l, f_0}(\min\{R, R+s_lc\}).$$
		%	\item  \textcolor{red}{  if ${\rm Im } \tau_{a_l}(f_n)\rightarrow c\in \mathbb R$, then  
			%		$$\overline{Y}_{l, f_n}(R)\rightarrow \overline{Y}_{l, f_0}(R)\cup \overline{W}_{l, f_0}(R+c), \ \overline{W}_{l, f_n}(R)\rightarrow \overline{Y}_{l, f_0}(R-c)\cup \overline{W}_{l, f_0}(R).$$
			%	It follows that $\overline{X}_{l, f_n}(R)\rightarrow \overline{Y}_{l, f_0}(R)\cup \overline{W}_{l, f_0}(R+c)$ if $c\leq 0$; $\overline{Y}_{l, f_0}(R-c)\cup \overline{W}_{l, f_0}(R)$ if $c\geq 0$. }
	\end{itemize}
In either of above cases, the   Hausdorff limit of $\overline{X}_{l, f_n}(R)$ is as claimed.
	\end{proof}

 \begin{proof}[Proof of Proposition \ref{ak-limit2}]

For the given compact set $E\subset U_{k,+,f_0}$, by Proposition \ref{cont-c}, there is a constant $M>0$ such that for large $n$, 
$$E\subset  U_{k,+,f_n}  \text{ and } -M\leq {\rm Im} \Phi_{k, +, f_n}|_{E},  {\rm Im} \Phi_{k, +, f_0}|_{E} \leq M.$$

  %For $l\geq 2$, let %$B_l=\sup_{n}|{\rm Im } \tau_{a_l}(f_n)|$, and define
%	$$Y_{l,g}(R)=\{z\in U_{a_{l}, +, g}; \ s_{l-1} {\rm Im } \Phi_{a_{l}, +, g}(z)> R\}, \ g\in \{f_n, f_0\},$$
%	then $X_{l, f_n}(R)=Y_{l, f_n}(R)\cap W_{l, f_n}(R)$, where $W_{l, f_n}(R)$ is  given by \eqref{wr}.

For $g\in \{f_n, f_0\}$,  	note that  the  sequence  $(\overline{Y}_{l, g}(R)\cup \overline{W}_{l, g}(R))_{l\geq 2}$ is pre-periodic. 
By this,  for any $\epsilon>0$,   there is a $R>0$ so that 

\begin{itemize}
	\item $\overline{Y}_{l, f_0}(R)\cup \overline{W}_{l, f_0}(R)\subset \mathbb D(0, \epsilon)$ for all $l\geq 2$.  By Lemma \ref{hausdorff-limit}, there is an integer $N$ so that  ${X}_{l, f_n}(R)\subset \mathbb D(0, \epsilon)$ for all $n\geq N$ and $l\geq 2$. 
	
	\item  for any $l\geq 1$ and any $w\in   \mathbb C
	$ satisfying that $s_l {\rm Im}(w)\geq R$, 
	$$|\E_{l, f_n}(w)-T_{C_{l, n}}(w)|\leq  1/2, \ |\E_{l, f_0}(w)-T_{C_{l}}(w)|\leq 1/2,$$
	where 
	$$C_{l,n}= \lim_{s_l {\rm Im}(w)\rightarrow +\infty}(\E_{l, f_n}(w)-w), C_{l}= \lim_{s_l {\rm Im}(w)\rightarrow +\infty}(\E_{l, f_0}(w)-w).$$
	
	\item for any $l\geq 1$ and any $n\geq N$, 
	$$s_l\Delta_l(f_n)\geq R_1:= R+M+\sum_{j=1}^{m+p}(1+|{\rm Im} C_j|).$$
	\end{itemize}

%	 \begin{itemize}
	% 	\item if $B_l<+\infty$, then , by  choosing $N$ large, we have $X_{a_l, f_n}(R+B_l)\subset \mathbb D(0, \epsilon)$ for $n\geq N$.
	 %	\item  if $B_l=+\infty$ and $s_{l-1}=s_{l}$, then for large $n$,  either  $Y_{a_l, f_0}(R)\subset W_{a_l, f_0}(R)$ or $W_{a_l, f_0}(R)\subset Y_{a_l, f_0}(R)$
	 %	\end{itemize}\\
 
 %We may also assume $R$ is large so that
 
 By the  equality $s_1\Phi_{\g(a_1), -, f_n}=s_1\tau_{a_1}(f_n)+s_1\Phi_{a_1, +, f_n}$, and enlarging $N$ if necessary,  we have $E\subset W_{1, f_n}(R_1)$  for all $n\geq N$.
 Let $E_{t, n}:=f_n^t(E)$ for all $n,t\geq 1$. 
% Let $C_{j,n}= \lim_{s_j {\rm Im}(w)\rightarrow +\infty}(\E_{j, f_n}(w)-w)$.
 By Proposition \ref{ecalle0}, $\lim_{n\rightarrow +\infty}C_{j,n}=C_{j}$.
 So we may assume $|C_{j,n}-C_{j}|<1/2$ for all $n\geq N$ and $j\geq 1$.
 
   %$ \lim_{s_j {\rm Im}(w)\rightarrow +\infty}(\E_{j, f_n}(w)- T_{C_{j, n}}(w))=0$  and
 By \eqref{ecalle},  for $e_{1,n} \leq t\leq q_{1, n}$,   we have
 \bess
 &\Phi_{a_{2}, +, f_n} \circ f_n^t|_E=T_t\circ \E_{1, f_n}\circ T_{\tau_{a_1}(f_n)}\circ \Phi_{a_1, +, f_n}|_E,&\\
  & \Phi_{\g(a_{2}), -, f_n} \circ f_n^t|_E=T_{\tau_{a_2}(f_n)}\circ  T_t\circ \E_{1, f_n}\circ T_{\tau_{a_1}(f_n)}\circ \Phi_{a_1, +, f_n}|_E.&
  \eess
 It follows that 
 \bess s_1{\rm Im} \Phi_{a_{2}, +, f_n} \circ f_n^t|_E&=&s_1 {\rm Im} \E_{1, f_n}\circ T_{\tau_{a_1}(f_n)}\circ \Phi_{a_1, +, f_n}|_E\\
 &\geq &  s_1\Delta_1(f_n) -s_1  {\rm Im} C_{1,n}-s_1/2-M\\
 &\geq& R_1-|{\rm Im} C_1|-1-M >R,
 \eess
  \bess s_2 {\rm Im}  \Phi_{\g(a_{2}), -, f_n} \circ f_n^t|_E&=&s_2 {\rm Im}  T_{\tau_{a_2}(f_n)}  \circ \E_{1, f_n}   \circ T_{\tau_{a_1}(f_n)}\circ \Phi_{a_1, +, f_n}|_E\\
 &\geq &  s_2\Delta_2(f_n) -s_1  {\rm Im} C_{1,n}-s_1/2-M\\
 &\geq& R_1-|{\rm Im} C_1|-1-M >R.
 \eess
 Hence $f_n^t(E)\subset Y_{2, f_n}(R)\cap W_{2, f_n}(R)=X_{2,f_n}(R)$.
 
 By \eqref{ecalle}, \eqref{l-equality} and 
 %$${\rm Im} \Phi_{a_{j+1}, +, f_n} \circ f_n^t={\rm Im} \Phi_{\g(a_j), -, f_n}+{\rm Im} C_{j,n}+ o_n(1), \ \forall j\geq 1$$
 %on $E_{q_{j-1,n}, n}$, where $t$ satisfies $e_{j,n}\leq t+q_{j-1,n}\leq q_{j,n}$, and $o_n(1)$ is a term with $\lim_{n\rightarrow +\infty}o_n(1)\rightarrow 0$.
 %By
  induction, enlarging $N$ if necessary,   we have 
 \begin{equation}\label{con-orb-e1}
 E_{t, n}\subset X_{j+1, f_n}(R)\subset \mathbb D(0, \epsilon),   \text{ for } e_{j, n} \leq t\leq q_{j,n}, 1\leq j\leq m+p, \ n\geq N.
 \end{equation}
 
 %We assume that $R$ is sufficiently large so that the region $\{w \in \mathbb{C}: s_j \operatorname{Im}(w) \geq R\}$ lies within the domain of definition of both $\E_{j, f_n}$ and $\E_{j, f_0}$.
 
  We assume $R$ is large so that $\{w\in \mathbb C;  s_j {\rm Im}(w)\geq R\}$  lies within  the common  domains  of $\E_{j, f_0}$ and  $\E_{j, f_n}$ for all $n\geq N$,   and $\E_{j, f_n}$ converges uniformly to $\E_{j, f_0}$ on this set  as $n\rightarrow +\infty$.  This implies that 
 {
   \begin{equation}\label{con-orb-e2}
  E_{t,n}\subset \mathbb D(0, \epsilon), \ \text{ for } q_{j-1, n} \leq t\leq e_{j, n}, 1\leq j\leq m+p, n\geq N,
   \end{equation} }
   where $q_{0, n}=q_0$ is independent of $n$  so that $f_n^{q_0}(E)\subset  \mathbb D(0, \epsilon)$ for  $n\geq N$.
 
% Again by the fact  $\E_{j, f_n}(w)-\E_{j, f_0}(w)\rightarrow 0$ uniformly as $s_j {\rm Im}(w)\rightarrow +\infty$, we have  
 %,  we need the following 
 %\begin{lem}   \label{c-orb2}  For any $j\geq 1$,   $\sup_{n\geq N}(e_{n, j}-q_{n, j-1})<+\infty$.
 %	there is an integer $m_j\geq 1$   independent of $n\geq N$ so that  $e_{n, j}-q_{n, j-1}\leq m_j$.
 %\end{lem}
%\begin{proof}
%	\end{proof}

  Note that $a_{m+p+1}=a_{m+1}$. 
On  $E_{q_{m, n}, n} \subset U_{a_{m+1},+, f_n}$,  by   \eqref{ecalle},  
$$\Phi_{a_{m+1}, +, f_n} \circ f_n^t=T_t\circ \E_{m+p, f_n}\circ T_{\tau_{a_{m+p}}(f_n)}\circ \dots \circ  \E_{m+1, f_n}\circ T_{\tau_{a_{m+1}}(f_n)}\circ \Phi_{a_{m+1}, +, f_n},$$
 where   $e_{m+p,n}\leq  t+q_{m,n}\leq q_{m+p,n}$. By Proposition \ref{phase}  and  Lemma \ref{asm-iota}, 
%\textcolor{red}{ we have  
\bess
 \Phi_{a_{m+1}, +, f_n} \circ f_n^t&=&   \Phi_{a_{m+1}, +, f_n}+\sum_{j=1}^p \tau_{a_{m+j}}(f_n)+\sum_{j=1}^p {C_{m+j, n}}+t+O(1)\\
&=&\Phi_{a_{m+1}, +, f_n}+2\pi i s_{*}\iota(f_n, v(f_n))+t+O(1).
\eess 
  Hence
  $$s_{*}{\rm Im}  \Phi_{a_{m+1}, +, f_n} \circ f_n^t=s_{*}{\rm Im}  \Phi_{a_{m+1}, +, f_n}+2\pi {\rm Re } \ \iota(f_n, v(f_n))+O(1).$$
  
  By Proposition  \ref{a-preperiodic},  ${\rm Re } \  \iota(f_n, v(f_n))\rightarrow +\infty  \text{ as } n\rightarrow +\infty$. Hence by enlarging $N$, we have $s_{*}{\rm Im}  \Phi_{a_{m+1}, +, f_n} \circ f_n^t\geq 2R$ on $E_{q_{m, n}, n}$ for $n\geq N$.
  By induction, for any given $n\geq N$, there is a sequence of integers $(t_j)_{j\geq 1}$ so that   $\lim_{j}t_j=+\infty$, $\sup_{j\geq 1}(t_{j+1}-t_j)<+\infty$, and
  $$s_{*}{\rm Im}  \Phi_{a_{m+1}, +, f_n} \circ f_n^{t_j}|_{E_{q_{m, n}, n}}\geq jR, \ \forall j\geq 1.$$
  Note that for any sequence $(z_j)_j$ in $U_{a_{m+1},+, f_n}$,   $$\lim_{j\rightarrow +\infty}s_{*}{\rm Im}  \Phi_{a_{m+1}, +, f_n}(z_j)\rightarrow +\infty  \Longrightarrow z_j\rightarrow v(f_n).$$
It follows that the subsequence $(f_{n}^{q_{m,n}+t_j})_{j\geq 1}$ converges uniformly on $E$ to the $f_n$-attracting fixed point $v(f_n)$. Since  $\sup_{j\geq 1}(t_{j+1}-t_j)<+\infty$, we conclude that $(f_{n}^{j})_{j\geq 1}$ converges uniformly  on $E$ to  $v(f_n)$.

Now for any open set $V$ with compact closure $\overline{V}$  in $A_k$,  there is an $m\geq 1$ so that $f_0^m(\overline{V})\subset U_{k,+,f_0}$. For all large $n$, the set $f_n^m(\overline{V})$
is contained in some compact set $E\subset U_{k,+,f_0}$. By what is  proven, the   sequence $(f_n^{j})_{j\geq 1}$ converges uniformly on $\overline{V}$ to the $f_n$-attracting point $v(f_n)$. Hence $V$ is  in the Fatou set $F(f_n)$. Since $V\subset A_k$ is arbitrary, we get $A_k\cap X= \emptyset, \ \forall X\in  \mathcal{L}((J(f_{n}))_n)$.
  	\end{proof}

 %Note that $f_n^{q_{n, m}}(E)\subset X_{{m+1}, f_n}(R)$.
%Take an  $n\geq N$. For all $l\geq q_{n,m}$, we have $f_n^{l}(E)\subset \mathbb D(0, \epsilon)$. 

 %$l\in A:=\{a_{m+1}, a_{m+2}, \cdots, a_{m+p}\}$, 

%\begin{lem}  [Controlling the orbit]  \label{c-orb} For any $\epsilon>0$, there exist  $R>0$ and an integer $N>0$ so that for all $n\geq N$,
	
%	(1).  $X_{l, f_n}(R)\subset \mathbb D(0, \epsilon)$,  $\forall  l\geq 2$;

	%(2).    $f_n^{t}(E)\subset \mathbb D(0, \epsilon)$ for all $t\geq q_{0}$,   where $q_0$ is an integer depending only on $\epsilon$ and independent of $n\geq N$;
	
	% (2). $f_n^{t}(E)\subset X_{a_l, f_n}(R)$, $\forall  e_{n, l} \leq t\leq q_{n, l}, 1\leq l\leq m+p$; 
	
	%(3).   $f_n^{t}(E)\subset \mathbb D(0, \epsilon)$,  $\forall q_{n, l-1} \leq t\leq e_{n, l}, 1\leq l\leq m+p$,  
	% here $q_{n,0}=q_0$ is am integer depending solely on $\epsilon$ and independent of $n$.  
	
	%(3).  $(f_n^{j})_{j\geq 1}$ converges uniformly on $E$ to the $f_n$-attracting point $v(f_n)$, where $v$ is given by Proposition \ref{a-preperiodic}.  
%\end{lem}

%Let $f$ be a rational map with parabolic fixed point $\zeta$ of multiplicity $\nu+1$.   Let $f_n\rightarrow f$ be  a $*$-sequence of $f$ at $\zeta$. 
For $k\in \mathbb Z_\nu$, let $\gamma_k: [0, +\infty)\rightarrow \mathbb D(0, r_0)$ be an $f_0$-invariant curve  $f_0(\gamma_k(t))=\gamma_k(t+1)$, so that $\gamma_k(t)$ converges to $0$ along the $k$-th attracting direction as $t\rightarrow +\infty$. 
%$\lim_{t\rightarrow +\infty}\gamma_k(t)=0$ and $\gamma_k$ is asymptotic to the $k$-th attracting.
 Let $\gamma_{n,k}: [0, +\infty)\rightarrow \mathbb C$ be an $f_n$-invariant curve  $f_n(\gamma_{n,k}(t))=\gamma_{n,k}(t+1)$ so that $\gamma_{n,k}$ converges locally and uniformly to $\gamma_k$ in $[0, +\infty)$ as $n\rightarrow +\infty$.
% If $\gamma_{n,k}$ converges  uniformly to $\gamma_k$ in $(-\infty, 0]$, we denote it by $\gamma_{n,k} \rightrightarrows \gamma_k$.

\begin{pro} \label{lic}   Applying Oudkerk's algorithm to $k\in \mathbb Z_\nu$,   we have
	
		1.	If the algorithm never terminates, then $\gamma_{n,k}$ converges  uniformly to $\gamma_k$ in $[0, +\infty)$.
%	$$\gamma_{n,k} \rightrightarrows \gamma_k.$$
	This implies that $\overline{\gamma_{n,k}}$ converges to $\overline{\gamma_k}$ in Hausdorff topology.

	2.	If the algorithm terminates at step (3), then there is no subsequence of $(\gamma_{n,k})_n$ converging to $\gamma_k$ uniformly in $[0, +\infty)$. Precisely, % $\gamma_k\subsetneq \liminf_n \gamma_{n,k}$.
	$$\overline{\gamma_k}\subsetneq  X, \ \forall X\in  \mathcal{L}((\overline{\gamma_{n,k}})_n).$$

\end{pro}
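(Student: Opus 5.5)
We reduce both parts to the orbit analysis already carried out in Propositions \ref{ak-limit1} and \ref{ak-limit2}, applied to a compact piece of the curve. Since $\gamma_k(t)\to 0$ along the $k$-th attracting direction, fix $t_0$ with $\gamma_k([t_0,t_0+1])\subset U_{k,+,f_0}$. Put $E:=\gamma_k([t_0,t_0+1])$, a compact subset of $U_{k,+,f_0}$. The local uniform convergence $\gamma_{n,k}\to\gamma_k$ gives $E_n:=\gamma_{n,k}([t_0,t_0+1])\to E$ uniformly, so $E_n$ lies in a fixed compact neighbourhood $E'\subset U_{k,+,f_0}$ for large $n$. By invariance, $\gamma_{n,k}([t_0+j,t_0+j+1])=f_n^j(E_n)$, so the tail of $\gamma_{n,k}$ is the forward orbit of $E_n$. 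On the finite part $[0,t_0]$, uniform convergence is given, so everything reduces to controlling the orbits $f_n^j(E_n)$ uniformly in $j$.

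\emph{Part 1.} If the algorithm never terminates, apply the proof of Proposition \ref{ak-limit2} to the compact set $E'$. Given $\epsilon>0$, it produces $R$, $N$ and an integer $q_0$, independent of $n\ge N$, such that $f_n^t(E')\subset\mathbb D(0,\epsilon)$ for all $t\ge q_0$ and all $n\ge N$. This is the content of \eqref{con-orb-e1}, \eqref{con-orb-e2}, together with the final step where the orbit stays in $X_{l,f_n}(R)\subset \mathbb D(0,\epsilon)$ and tends to $v(f_n)$. Also choose $T$ with $\gamma_k([T,\infty))\subset\mathbb D(0,\epsilon)$ and $T\ge t_0+q_0$. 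Then for $n\ge N$ both $\gamma_{n,k}$ and $\gamma_k$ stay in $\mathbb D(0,\epsilon)$ on $[T,\infty)$, so $|\gamma_{n,k}-\gamma_k|<2\epsilon$ there. On $[0,T]$, local uniform convergence handles the rest. Hence $\gamma_{n,k}\rightrightarrows\gamma_k$, which gives Hausdorff convergence of the closures.

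The step to watch is that the estimates in Proposition \ref{ak-limit2} are uniform in $n$ and valid for a compact set ($E'$) rather than a single point. Checking the proof, all constants ($M$, $R_1$, $q_0$, $N$) depend only on the compact set, so this holds.

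\emph{Part 2.} If the algorithm terminates, Proposition \ref{ak-limit1} applied to $E'$ gives, for any subsequence, a further subsequence and integers $N_n=q_{\ell-1,n}$ such that $f_n^{N_n}\to g_\sigma$ uniformly on $E'$. Because $E_n\to E$, we get $f_n^{N_n}(E_n)\to g_\sigma(E)$. By construction this set lies near the repelling petal $U_{\g(a_\ell),-,f_0}$, bounded away from $0$ in Fatou coordinate terms. Moreover $N_n\to\infty$: indeed $q_{\ell-1,n}\approx -{\rm Re}\sum\tau_{a_j}(f_n)\to+\infty$ by the Selection Principle condition ${\rm Re}(e^{-i\phi}\tau)\to-\infty$.

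Take any Hausdorff limit $X$ of $\overline{\gamma_{n,k}}$ along a subsequence. Then $X\supset\overline{\gamma_k}$ by local uniform convergence, and $X\supset g_\sigma(E)$. Now $g_\sigma(E)$ is a nondegenerate arc inside $\Phi^{-1}_{\g(a_\ell),-,f_0}$ of a translate of the image of $E$, hence contained in a repelling petal. The curve $\overline{\gamma_k}$ meets a small neighbourhood of $0$ only inside the attracting petal $U_{k,+,f_0}$ near the $k$-th attracting direction. If needed, replace $E$ by a later fundamental arc $f_0^m(E)$ (with shifted $N_n$) so that $g_\sigma(E)$ lies in $\mathbb D(0,\epsilon)$ within the repelling sector. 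Then $g_\sigma(E)\not\subset\overline{\gamma_k}$, since repelling and attracting petals are disjoint away from $0$. So $X\supsetneq\overline{\gamma_k}$ for every limit $X$ taken along a further subsequence.

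To conclude for every $X\in\mathcal L$: any convergent subsequence admits such a further subsequence, and the limit is unchanged. This strict containment also rules out uniform convergence along any subsequence.

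The main obstacle is exactly this disjointness check. One must ensure that the limit pieces $g_\sigma(E)$ genuinely leave $\overline{\gamma_k}$, which is handled by pushing $E$ forward under $f_0$ and using that $g_\sigma$ commutes with $f_0$.
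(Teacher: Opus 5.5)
Your argument follows the paper's route. Part 1 is the paper's proof: you get uniform-in-$n$ control of the orbit of a fundamental arc from \eqref{con-orb-e1}, \eqref{con-orb-e2} and the proof of Proposition \ref{ak-limit2}. Part 2 uses, as the paper does, the Lavaurs limit $f_n^{N_n}\to g_\sigma$ of Proposition \ref{ak-limit1} on a fundamental arc. In Part 2 you go beyond the paper, whose proof only records $\liminf_n{\rm diam}\,\gamma_{n,k}[N_n,+\infty)>0$. You justify $N_n\to+\infty$, which that argument tacitly needs. You also try to prove the strict inclusion $\overline{\gamma_k}\subsetneq X$, which the paper asserts but does not verify.

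That extra step, as you describe it, goes the wrong way. Pushing $E$ forward and using $g_\sigma\circ f_0=f_0\circ g_\sigma$ gives $g_\sigma\circ f_0^m=g_{\sigma+m}=f_0^m\circ g_\sigma$. This translates the limit arc by $+m$ in the coordinate $\Phi_{\g(a_\ell),-,f_0}$. In a repelling petal that moves it away from $0$ and eventually out of $U_{\g(a_\ell),-,f_0}$, not into $\mathbb D(0,\epsilon)$.

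The fix is to keep $E$ fixed and shift time backwards:
\begin{itemize}
\item For fixed $m$ and large $n$, \eqref{l-equality} still holds at $l=q_{\ell-1,n}-m$, because the number of steps the orbit spends in the double petal $U_{a_\ell,+,f_n}$ tends to infinity.
\item Hence $f_n^{N_n-m}\to g_{\sigma-m}=\Phi^{-1}_{\g(a_\ell),-,f_0}\circ T_{\sigma-m}\circ\Phi_{k,+,f_0}$ uniformly on $E'$.
\item For $m$ large, $g_{\sigma-m}(E)\subset U_{\g(a_\ell),-,f_0}\cap\mathbb D(0,\epsilon)$.
\item Since $f_n^{N_n-m}(E_n)\subset\gamma_{n,k}$, every $X$ contains $g_{\sigma-m}(E)$.
\end{itemize}
Your observations that $\overline{\gamma_k}\cap\mathbb D(0,\epsilon)\subset U_{k,+,f_0}\cup\{0\}$ for small $\epsilon$, and that the attracting and repelling petals of $f_0$ are disjoint, then give $\overline{\gamma_k}\subsetneq X$. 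With this correction the proof is complete.
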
  
\begin{proof} 
	Replacing $\gamma_k$ by  $f_0^{m}(\gamma_k)$ for some   $m\geq 0$, we may
assume $\gamma_k\subset U_{k,+, f_0}$.  Since $\gamma_{n,k}$ converges uniformly to $\gamma_{k}$ in $[0,1]$, there is an integer $n_0>0$ so that   $E=\gamma_k[0,1] \bigcup \bigcup_{n\geq n_0} \gamma_{n,k}[0,1]$ is a compact subset of $A_k$. 
	
	1.  Assume the algorithm never terminates.
	For any $\epsilon>0$,  by \eqref{con-orb-e1} and \eqref{con-orb-e2} and  the proof of  Proposition \ref{ak-limit2}, there are two integers $N\geq n_0, q_0>0$ so that for all $n\geq N$,  
		$$ \gamma_{n,k}[q_0, +\infty)\subset \mathbb D(0, \epsilon).$$
	 Since $\gamma_{n,k}$ converges  locally and uniformly to $\gamma_k$ in $[0, +\infty)$, we may enlarge $N$ so that $|\gamma_{n,k}(t)-\gamma_{k}(t)|\leq \epsilon$ for $t\in [0,q_0]$ and $n\geq N$. 
	It follows that  $|\gamma_{n,k}(t)-\gamma_{k}(t)|\leq 2\epsilon$ for all $t\geq 0$ and all $n\geq N$.   This shows  the uniform convergence $\gamma_{n,k} \rightrightarrows \gamma_k$.

2. Assume we get a finite sequence $a_1:=k, \cdots, a_\ell$.  By Proposition \ref{ak-limit1} and replacing $(f_n)_n$ by a subsequence,   there is a sequence of integers $(N_{n})_{n\geq 1}$ so that 
	$f_n^{N_{n}}$ converges locally and uniformly in $A_k$ to the Lavaurs map  $g_{\sigma}= \Phi_{\g(a_\ell), -, f_0}^{-1} \circ T_{\sigma}\circ \Phi_{a_1, +, f_0}: A_k\rightarrow \mathbb C$.
	Note that $\gamma_{n,k}[0,1]\subset E$ for $n\geq n_0$.
	By the Hausdorff convergence $f_n^{N_{n}}\gamma_{n,k}[0,1]=\gamma_{n,k}[N_n,N_n+1]\rightarrow g_\sigma(\gamma_k[0,1])$, we get that 
	$$\liminf_{n\rightarrow +\infty}{\rm diam}\gamma_{n,k}[N_n, +\infty)\geq  {\rm diam}g_\sigma(\gamma_k[0,1])>0.$$
	Hence  no subsequence of $(\gamma_{n,k})_n$ converges to $\gamma_k$ uniformly in $[0, +\infty)$.
	%	For large $n$,   This implies 
%	$$A_k\cap \limsup_{n\rightarrow \infty} J(f_n)\neq \emptyset.$$  
	\end{proof}

\begin{rmk}\label{germ} Our discussion in this section applies equally well  to the general setting that $f_0, f_n$ are local holomorphic germs.  %rather than globally defined rational maps. 
	\end{rmk}

%- \#\{k\in \mathbb Z_{\nu}; \mathbf G_k=*\}$ 

\section{Continuity for generic perturbation} \label{generic-p}

	This section investigates the relationship  between the
	horocyclic convergence and Hausdorff convergence of invariant curves and Julia sets.  We shall prove Theorems  \ref{hdc-horo} and  \ref{limit-A}. 

%In this section, we study the  relation between the horocyclic convergence and the kernel convergence of Fatou components and the Hausdorff convergence of invariant curves.  
% continuity for Julia sets and Fatou components.  

%\section{Julia continuity implies horocyclic convergence}

%$\vartheta$$\varepsilon$$\chi$$\eta$

Let   $(f_n)_n$ be a generic perturbation of $f_0$.   Let $\zeta=0$ be a parabolic fixed point of $f_0$ with multiplicity $\nu+1$,    $f_0'(0)=1$.    
In what follows,  we adopt the notations introduced at the beginning of \S \ref{h-e}.  
%For each parabolic fixed point $\zeta$ of $f$, we use the same notations 
 %satisfying . 

\vspace{5pt}
\noindent \textbf{Proof of Theorem \ref{hdc-horo}.}   Assume $J(f_n)\rightarrow J(f_0)$.  To show 	$f_n\rightarrow f_0$ horocyclically, it is equivalent to show that for any $v\in V$, 
\begin{equation} \label{h-limit}
|{\rm Re } \  \iota(f_n, v(f_n))|\rightarrow +\infty.
\end{equation}

For each $k\in \mathbb Z_\nu$, let $V^+(k)$ (resp.  $V^-(k)$ ) be the set of vertices $v\in V$ so that 
$X_v$ is on the upper (resp. lower) component of $\mathbb D\setminus \ell_k$ with respect to the direction of $\ell_k$.  It is clear that $V=V^+(k)\sqcup V^-(k)$.

For any $k\in \mathbb Z_\nu$,  recall that $A_k$ is the $k$-th immediate parabolic basin of $f_0$ at $0$. The assumption $J(f_n)\rightarrow J(f_0)$ implies that 
$A_k\cap X= \emptyset, \ \forall X\in  \mathcal{L}((J(f_{n}))_n)$.
% $A_k\cap \limsup_{n\rightarrow \infty} J(f_n)= \emptyset$.
	  By Proposition \ref{ak-limit1}, Oudkerk's algorithm can not   terminate at step (3).
	  It follows that 
	  \begin{itemize}
	  	\item we get two infinite sequences
	  	$$\boldsymbol{a}(k)=(a_j(k))_{j\geq 1}\in   (\mathbb Z_\nu)^{\mathbb N},  \ \boldsymbol{s}(k)=(s_j(k))_{j\geq 1}\in \{\pm 1\}^{\mathbb N}$$
	  	such that $a_1(k)=k$ and $\boldsymbol{s}(k)$ is given by \eqref{s-j}. 
	  	
	  	\item For any $k\in \mathbb Z_\nu$ and $s\in \{\pm\}$, 
	  	\begin{equation} \label{h-limit1} \sum_{v\in V^{s}(k)} {\rm Re } \ \iota(f_n,  v(f_n))\rightarrow +\infty \text{ or }  -\infty.
	  	\end{equation}
	  	\end{itemize}
	  	
	  To see \eqref{h-limit1}, it suffices to note that since Oudkerk's algorithm for $k$ can not   terminate at step (3), we get  ${\rm Im}\  \tau_{k}(f_n)\rightarrow +\infty$ or $-\infty$ as   $n\rightarrow \infty$.  By Proposition \ref{phase}, 
	  %	\begin{equation}\label{p-formula}\tau_{k}(f_n)=2\pi i \sum_{v\in V^+(k)}  \iota(f_n,  v(f_n))+O(1)=-2\pi i\sum_{v\in V^-(k)}    \iota(f_n,  v(f_n))+O(1).
	  	%	\end{equation}
	  \bess {\rm Im}\  \tau_{k}(f_n)&=&2\pi \sum_{v\in V^+(k)} {\rm Re } \ \iota(f_n,  v(f_n))+O(1)\\
	  &=&-2\pi \sum_{v\in V^-(k)} {\rm Re } \ \iota(f_n,  v(f_n))+O(1).
	  \eess
	  Then \eqref{h-limit1} follows immediately.

%for any $r\geq 1$, as $n\rightarrow \infty$.

% then mark $v$ in the tree $T$ as for the former case, and as   (means repelling)  for the  latter case.  $\ast$ $\star$ $\odot$ $\diamond$ $\times$ 

%\begin{lem}  
%\end{lem}

\begin{lem} \label{two-sequences}  Let $k, k'\in \mathbb Z_{\nu}$.  If 
	$$a_{r+1}(k)=a_1(k'), \ \ s_r(k)=s_1(k')=\cdots =s_m(k')$$
	for some integers $r\geq 1$ and $m\geq 1$, then 
	$$a_{r+s}(k)=a_{s}(k'), \ \forall 1\leq s\leq m+1;  \ s_r(k)=s_{r+1}(k)=\cdots =s_{r+m}(k).$$
\end{lem}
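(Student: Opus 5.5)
Within the proof of Theorem \ref{hdc-horo}, both sequences $\boldsymbol a(k)$, $\boldsymbol s(k)$ and $\boldsymbol a(k')$, $\boldsymbol s(k')$ are infinite, and the algorithm is deterministic given the signs: $a_{j+1}=\g(a_j)$ if $s_j=+1$ and $a_{j+1}=\widehat{\g}(a_j)$ if $s_j=-1$. So it suffices to show that the sign sequence of $k$ after position $r$ reproduces that of $k'$ for $m$ steps, i.e. $s_{r+i}(k)=s_r(k)$ for $0\le i\le m$ (the case $i=0$ being trivial). The index statement then follows by induction on $s$. For the base case, $a_{r+1}(k)=a_1(k')$ is given. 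For the inductive step, if $a_{r+s}(k)=a_s(k')$ and $s_{r+s}(k)=s_s(k')$ (both equal to the common sign $\sigma:=s_r(k)$ for $s\le m$), then the deterministic rule gives $a_{r+s+1}(k)=a_{s+1}(k')$. This reaches $s=m+1$.

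For the signs I would induct on $i$, using the decomposition
$$\Delta_{r+i}(f_n;k)=\Delta_r(f_n;k)+{\rm Im}\sum_{j=1}^{i}\tau_{a_{r+j}(k)}(f_n)=\Delta_r(f_n;k)+\Delta_i(f_n;k'),$$
valid once $a_{r+j}(k)=a_j(k')$ for $1\le j\le i$. That identity is supplied by the index induction at stage $i$, so the two inductions are run together. By hypothesis, $\sigma\Delta_r(f_n;k)\to+\infty$, and for $i\le m$ we have $\sigma\Delta_i(f_n;k')=s_i(k')\Delta_i(f_n;k')\to+\infty$. Hence $\sigma\Delta_{r+i}(f_n;k)\to+\infty$, which forces $s_{r+i}(k)=\sigma$. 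Here I use that $s_{r+i}(k)$ is, by \eqref{s-j}, determined by the sign of the divergence of $\Delta_{r+i}$ along the chosen subsequence.

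The main subtlety is bookkeeping about subsequences. The algorithms for $k$ and $k'$ each pass to subsequences, so I would fix, at the start of the proof of Theorem \ref{hdc-horo}, a single subsequence along which every quantity $\Delta_j(f_n;k)$ (for all $k$ and all $j$) diverges with a definite sign. This is possible by a diagonal argument, and because the sequences are preperiodic only finitely many distinct sums occur. Along this subsequence the equality of signs is a genuine statement about limits, and the computation above involves no further choices.
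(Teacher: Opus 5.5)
Your proof is correct and follows the paper's argument. The paper adds $s_r(k)\Delta_r(f_n;k)\to+\infty$ to $s_1(k')\,{\rm Im}\,\tau_{a_1(k')}(f_n)\to+\infty$ to get $s_{r+1}(k)=s_r(k)$, uses the algorithm's rule to get $a_{r+2}(k)=a_2(k')$, and repeats; this is exactly your joint induction via $\Delta_{r+i}(f_n;k)=\Delta_r(f_n;k)+\Delta_i(f_n;k')$. Your remark about fixing one common subsequence for all $k$ makes explicit something the paper leaves implicit, but its justification should be a diagonal argument, or the fact that each algorithm extracts subsequences only finitely often, rather than the claim that ``only finitely many distinct sums occur'', which is not literally true.
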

\begin{proof} By the assumption $a_{r+1}(k)=a_1(k')$ and $s_r(k)=s_1(k')$, we have
	%$a_{r+1}(k)=a_{2}(k')$. Combining with the facts 
	$$s_1(k') \cdot {\rm Im}  \tau_{a_1(k')}(f_n)=s_r(k) \cdot {\rm Im}  \tau_{a_{r+1}(k)}(f_n) \rightarrow +\infty.$$ 
	Adding to $s_r(k)   {\rm Im}\sum_{j=1}^{r}\tau_{a_j(k)}(f_n)\rightarrow +\infty$, we get $s_r(k)  {\rm Im}\sum_{j=1}^{r+1}\tau_{a_j(k)}(f_n)\rightarrow +\infty$,
	this implies that $s_{r+1}(k)=s_r(k)=s_2(k')$.   Combining with the assumption $a_{r+1}(k)=a_{1}(k')$, we get 
	$a_{r+2}(k)=a_{2}(k')$. 
	
	Repeating the argument, we get the conclusion.
\end{proof}

In the following, 
% a  vertex $v\in V$ in the tree $T$ is marked as “$\bullet$".  
%If ${\rm Re } \  \iota(f_n, v(f_n))\rightarrow +\infty$, then $v(f_n)$ is $f_n$-attracting for large $n$, in this case  $v$ is marked as “$\circ$" (means {\it horocyclically  attracting});     if ${\rm Re } \  \iota(f_n, v(f_n))\rightarrow -\infty$, then $v(f_n)$ is $f_n$-repelling for large $n$, in this case  $v$ is marked as  “$\mb$”(means  {\it horocyclically  repelling}).
  for any  nonempty subset $X\subset V$,  define its  holomorphic index  $\iota(f_n, X)$ of $f_n$ by 
 $$\iota(f_n, X):=\sum_{v\in X} \  \iota(f_n, v(f_n)).$$

 We say that $X$ is  {\it horocyclically  attracting}   if ${\rm Re } \  \iota(f_n, X)\rightarrow +\infty$, 
  {\it horocyclically  repelling} if ${\rm Re } \  \iota(f_n, X)\rightarrow -\infty$.

In the following, to show Theorem \ref{hdc-horo}, we need to verify that  for any $v\in V$, we    have \eqref{h-limit}.
 If $v$ is an endpoint of  $T$ (that is, there is only one edge incident to $v$), then there exist $k\in \mathbb Z_{\nu}$ and $s\in \{\pm\}$ such that  $V^{s}(k)=\{v\}$.  In this case,  \eqref{h-limit1} implies \eqref{h-limit}. 
	
	In what follows, we assume  at least two edges are incident to $v$.
		Let $v_1, \cdots$, $v_{\delta}$ be the vertices adjacent to $v$, labeled so that the edges $[v, v_1], \cdots, [v, v_{\delta}]$ appear in the cyclic order given by the orientation $\mathcal{O}(v)$ of $v$.
	%	Let $v_1, \cdots, v_{\delta}$ be the adjacent vertices of $v$, numbered  so that the adjacent  edges $[v, v_1],  \cdots, [v, v_{\delta}]$ are in   accordance with the orientation $\mathcal O(v)$ of  
	  The set $T\setminus \{v\}$ consists of $\delta$ connected components. 
	The  vertex set  in the component $T\setminus \{v\}$ containing $v_j$  
	%	For each $v_j$ which satisfies the condition of Lemma \ref{h2}, let $[v_j, u_j]$ be the minimal path avoiding the vertices in $\bigsqcup_{j\geq 3}V_j$ with $u_j\in V_1$. The vertex set along $[v_j, u_j]$
	 can be written as $V^s(k_j)$ for a unique  $k_j\in \mathbb Z_\nu$, and a unique $s\in \{\pm\}$ independent of $j$.  By \eqref{h-limit1},   $V^s(k_j)$ is either    horocyclically  attracting or 
 horocyclically  repelling.   %Since all adjacent edges  have same  orientation, this $e$ is independent of $j$. 
	Note that if  the orientation $\mathcal O(v)$ is positive cyclic order, then $s=-$;
	if   $\mathcal O(v)$ is negative cyclic order, then $s=+$.

%	Assume all adjacent vertices of $v$ satisfy the condition of Lemma \ref{h2}.  
By \eqref{lim-iota}, we have  
\begin{equation} \label{asm3}
\sum_{j=1}^{\delta}\iota(f_n, V^s(k_j))+\iota(f_n, v(f_n))=O(1),   \text{ for large } n.
\end{equation} %for large $n$.

	If  $V^s(k_j)$'s   are all  horocyclically  attracting or all  horocyclically  repelling,  then  $v$ satisfies \eqref{h-limit}  and the proof is done.  So  we may assume some of $V^s(k_j)$'s  are  horocyclically  attracting  and some are
 horocyclically  repelling.

\begin{lem} \label{se-h-rep}  Let   $j\in \mathbb Z_{\delta}$ and $1\leq m<\delta$.  Assume 
	$$V^s(k_{j}),  V^s(k_{j})\sqcup V^s(k_{j+1}), \cdots,  V^s(k_{j})\sqcup\cdots\sqcup V^s(k_{j+m-1})$$ are all horocyclically  repelling, here $k_{s}$ means $k_{s-\delta}$ if $s>\delta$.
	
	1. The  sequence $\boldsymbol{a}(k_j)=(a_l(k_j))_{l\geq 1}$  satisfies that
	\begin{equation} \label{seq-m} a_l(k_j)=k_{j+l-1}, 2\leq l\leq m+1.
		\end{equation}  
	
	2. The set   $V^s(k_{j})\sqcup \cdots\sqcup V^s(k_{j+m})$ is either  horocyclically  repelling or horocyclically  attracting. \footnote{In contrast, if $V^s(k_{j})$ is  horocyclically  attracting, it is possible that $V^s(k_{j})\sqcup V^s(k_{j+1})$ is neither horocyclically  repelling nor horocyclically  attracting.}
%	2.  If the two sequences  $\boldsymbol{a}(k_j)=(a_l(k_j))_{l\geq 1}$ and  $\boldsymbol{a}(k)=(a_l(k))_{l\geq 1}$ satisfy
%	$$a_r(k)=a_1(k_j), \ 	{\rm Im}\sum_{j=1}^r \tau_{a_j(k)}(f_n)\rightarrow +\infty$$
%	for some $r\geq 1$, then 
%	$$a_{r+s}(k)=a_{1+s}(k_j), \ \forall \ 0\leq s\leq m.$$
	\end{lem}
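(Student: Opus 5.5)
The plan is to compute the partial sums $\Delta_r(f_n)={\rm Im}\sum_{l=1}^r\tau_{a_l(k_j)}(f_n)$ from \eqref{delta-j} explicitly in terms of holomorphic indices, and then to read off the algorithm's choices from their signs. Throughout we are inside the proof of Theorem \ref{hdc-horo}. There $J(f_n)\to J(f_0)$, so by Proposition \ref{ak-limit1} Oudkerk's algorithm never terminates for any starting index, and every $\Delta_r$ tends to $\pm\infty$.

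\textbf{Step 1 (index formula for edges at $v$).} Every edge $\boldsymbol e(k_i)$ is incident to $v$, and $V^s(k_i)$ is the side of $\ell_{k_i}$ not containing $v$, with the same $s$ for all $i$. Proposition \ref{phase} applied to the $k_i$-th gate then gives
$$\tau_{k_i}(f_n)=\epsilon_s\, 2\pi i\,\iota(f_n,V^s(k_i))+O(1),$$
where $\epsilon_+=+1$ and $\epsilon_-=-1$. In particular ${\rm Im}\,\tau_{k_i}(f_n)=2\pi\epsilon_s{\rm Re}\,\iota(f_n,V^s(k_i))+O(1)$. Summing over consecutive edges around $v$ gives, whenever $a_l(k_j)=k_{j+l-1}$ for $l\le r$,
$$\Delta_r(f_n)=2\pi\epsilon_s\,{\rm Re}\,\iota\bigl(f_n,V^s(k_j)\sqcup\cdots\sqcup V^s(k_{j+r-1})\bigr)+O(1).$$

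\textbf{Step 2 (combinatorial step).} I need the following fact. If $a_r=k_{j+r-1}$ and $\Delta_r\to -\epsilon_s\infty$, i.e.\ the accumulated union is horocyclically repelling, then the algorithm's rule selects $a_{r+1}=k_{j+r}$. Concretely, $\g$ in case (b) and $\widehat{\g}$ in case (c) send $k_{j+r-1}$ to the next edge after it in the orientation $\mathcal O(v)$. Here I would use the geometry of the geodesics $\ell_k$, running from $\epsilon_{k,+}$ to $\epsilon_{\g(k),-}$. The arc of $\partial\mathbb D$ bounding $X_v$ between $\ell_{k_i}$ and $\ell_{k_{i+1}}$ contains exactly one marked point. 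Whether the next edge at $v$ is $\ell_{\g(k_i)}$ or $\ell_{\widehat{\g}(k_i)}$ depends only on whether $v$ lies on the left or the right of $\ell_{k_i}$, which is exactly the dichotomy $s=\mp$. This sign is matched with the sign of ${\rm Im}$ forced by repulsion on the far side. The same correspondence is implicit in Proposition \ref{a-preperiodic}, where the periodic part runs through consecutive edges of $\mathcal O(v)$, and I would check the sign convention against it. I expect this to be the main obstacle: carefully matching the orientation conventions ($s$, left/right of $\ell_k$, and the choice between $\g$ and $\widehat{\g}$).

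\textbf{Step 3 (induction and conclusion).} By hypothesis, for each $r\le m$ the union $V^s(k_j)\sqcup\cdots\sqcup V^s(k_{j+r-1})$ is horocyclically repelling. By Steps 1 and 2, induction on $r$ then gives $a_{r+1}(k_j)=k_{j+r}$ for $1\le r\le m$, with all signs $s_1=\cdots=s_m$ equal to $-\epsilon_s$. This proves \eqref{seq-m}. For part 2, apply Step 1 with $r=m+1$, which is legitimate now that $a_{m+1}=k_{j+m}$:
$$\Delta_{m+1}(f_n)=2\pi\epsilon_s{\rm Re}\,\iota\bigl(f_n,V^s(k_j)\sqcup\cdots\sqcup V^s(k_{j+m})\bigr)+O(1).$$
Since the algorithm for $k_j$ does not terminate at step $(3)$, $\Delta_{m+1}(f_n)\to\pm\infty$. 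Hence this union is horocyclically repelling or horocyclically attracting.
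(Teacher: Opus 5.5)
Your proposal is correct and is essentially the paper's proof. The paper also uses Proposition~\ref{phase} to write ${\rm Im}\,\tau_{k_i}(f_n)$ in terms of ${\rm Re}\,\iota(f_n,V^s(k_i))$, notes that for positive orientation $\mathcal O(v)$ one has $s=-$ and $\g(k_i)=k_{i+1}$, so repulsion forces case (b) and inductively $a_{l+1}(k_j)=k_{j+l}$, treats the negative case as symmetric with $\widehat{\g}$, and gets part 2 from the index formula for the $(m+1)$-st partial sum together with non-termination of the algorithm. Your $\epsilon_s$ bookkeeping just merges the two orientation cases, and your Step 2 justifies a combinatorial fact the paper simply asserts; one phrasing is off there, since the boundary arc of $X_v$ between consecutive edges joins two adjacent marked points and has none in its interior, not exactly one. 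Stated that way, following $\ell_{k_i}$ to $\epsilon_{\g(k_i),-}$ and stepping to its anticlockwise or clockwise neighbour gives $\ell_{\g(k_i)}$ or $\ell_{\widehat{\g}(k_i)}$, according as $X_v$ lies to the left or right of $\ell_{k_i}$.
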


 % Let $k\in \mathbb Z_\nu$, suppose the infinite sequence  $\boldsymbol{a}(k)=(a_l(k))_{l\geq 1}$ given by Oudkerk's algorithm 
	
	%	{\bf Claim: }
	
	\begin{proof} 1. If the   orientation $\mathcal O(v)$ % of the adjacent  edges $[v, v_1],  \cdots, [v, v_{\delta(v)}]$ 
		is positive cyclic order, then 
	$\g(k_{j})=k_{j+1}$ 	for each $j\in \mathbb Z_{\delta}$.  The assumption that $V^s(k_{j})$ is  horocyclically  repelling  implies that ${\rm Im}\tau_{a_1(k_j)}(f_n)\rightarrow +\infty$. Hence $s_1(k_j)=+1$ and $a_2(k_j)=\g(k_{j})=k_{j+1}$. By the assumption and induction, we get the conclusion.

		The same reasoning works for the case that $\mathcal O(v)$ is negative cyclic order.

		2.   By  \eqref{seq-m} and Proposition \ref{phase},   
		$${\rm Im}\sum_{l=1}^{m+1} \tau_{a_l(k_j)}(f_n)=2\pi {\rm Re } \ \iota(f_n,  V^s(k_{j})\sqcup \cdots\sqcup V^s(k_{j+m}))+O(1).$$
		Since Oudkerk's algorithm can not   terminate at step (3), the above quantity tends to $+\infty$ or $-\infty$ as $n\rightarrow \infty$. This implies  that $V^s(k_{j})\sqcup \cdots\sqcup V^s(k_{j+m})$  is either horocyclically  repelling  or horocyclically  attracting.
		\end{proof}
	
By relabeling, we assume $V^s(k_{1})$ is  horocyclically  repelling. By Lemma \ref{se-h-rep}, we may rewrite $V^s(k_{1})\sqcup\cdots\sqcup V^s(k_{\delta})$ %decompose $V\setminus \{v\}$ in 
as the disjoint union $X_1\sqcup \cdots \sqcup X_\ell$ for some $\ell\geq 1$ so that
%Each $X_j$ with takes the form  $V^e(k_{s_j})\sqcup \cdots\sqcup V^e(k_{s_j+m_j})$ with $0\leq m_j<\delta(v)$ satisfying the following properties 
\begin{itemize}
	\item For each $1\leq l \leq \ell$,  % there are integers $m_l\geq 0$ so that 
	 the set $X_l$ takes the form
		$$X_l=V^s(k_{t_l})\sqcup V^s(k_{t_l+1})\sqcup \cdots\sqcup V^s(k_{t_l+m_l}),$$
	 for some  $1\leq t_l\leq \delta$ and $m_l\geq 0$.

		\item $1={t_1}<{t_2}<\cdots< {t_\ell}\leq {t_\ell+m_\ell}=\delta$.
		
	\item   For each $1\leq l\leq \ell$,    either 
	
	\begin{itemize}
		\item $m_l=0$ and 
	$V^s(k_{t_l})$ is horocyclically attracting, or 

\item  $m_l\geq 1$  and
	$$V^s(k_{t_l}),  V^s(k_{t_l})\sqcup V^s(k_{t_l+1}), \cdots,  V^s(k_{t_l})\sqcup\cdots\sqcup V^s(k_{t_l+m_l-1})$$ are all horocyclically  repelling.
	\end{itemize}

	\item  $X_1, \cdots, X_{\ell-1}$ are all  horocyclically  attracting (if $\ell>1$),  $X_{\ell}$ is either horocyclically  repelling or horocyclically  attracting.
	\end{itemize}

	\begin{figure}[h]
	\begin{center}
		\includegraphics[height=7cm]{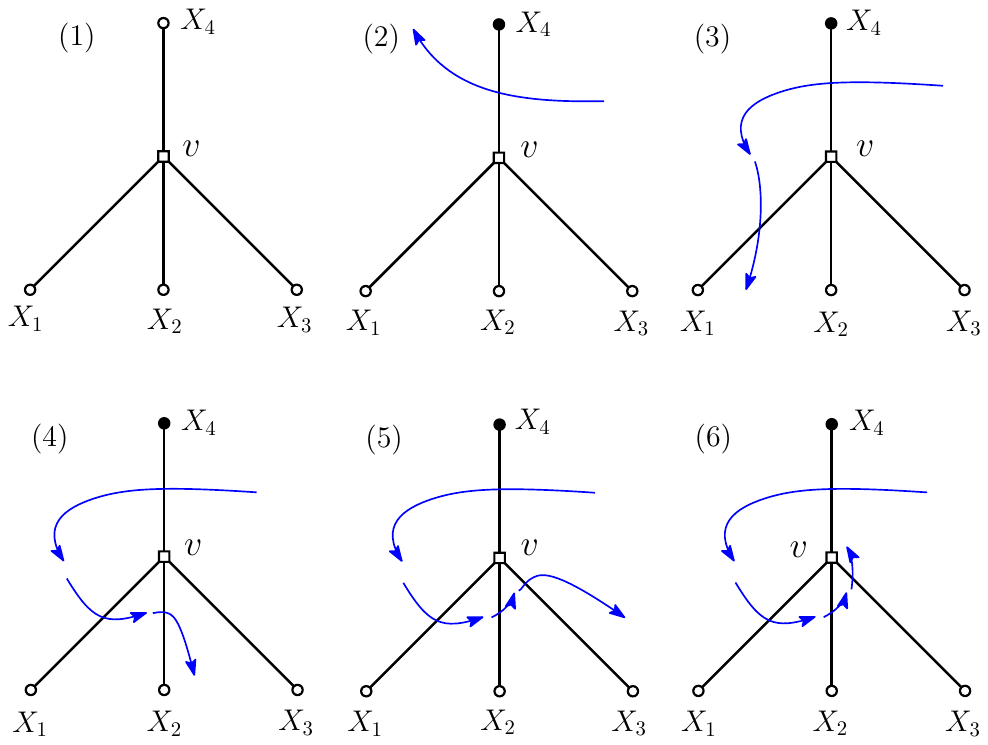}
	\end{center}
	\caption{The case $\ell=4$. Horocyclically  repelling set is marked  $\bullet$, horocyclically   attracting set is marked $\circ$.}%  The case (2) is impossible.}
	\label{fig:reduced}
\end{figure}

There are three cases:

(1). $\ell=1$. In this case, $X_1$ is   either horocyclically  repelling or horocyclically  attracting.
It follows from \eqref{asm3} that   $v$ satisfies \eqref{h-limit}.

(2). $\ell>1$ and  $X_1, \cdots, X_{\ell}$ are all horocyclically  attracting (see Figure \ref{fig:reduced}  (1)). Again by \eqref{asm3},  $v$ is horocyclically  repelling.

(3). $\ell>1$ and $X_1, \cdots, X_{\ell-1}$ are all horocyclically  attracting and   $X_{\ell}$ is   horocyclically  repelling.

%In this case, we consider the infinite  sequence  $\boldsymbol{a}(k_{s_\ell})=(a_l(k_{s_\ell}))_{l\geq 1}$  given by Oudkerk's algorithm.
Now we consider case (3).
By the above properties of $X_\ell$ and  Lemma \ref{se-h-rep},
$$a_l(k_{t_\ell})=k_{t_\ell+l-1}, \ 1\leq l \leq m_\ell+1.$$

There are two possibilities for $a_{m_\ell+2}(k_{t_\ell})$:

If $a_{m_\ell+2}(k_{t_\ell})\neq k_{t_1}$ (see Figure \ref{fig:reduced} (2)), then $X_1\sqcup \cdots \sqcup X_{\ell-1}\sqcup\{v\}$ is  horocyclically  repelling. Since $X_\ell$ is  horocyclically  repelling,  the whole vertex set $V$ is necessarily   horocyclically  repelling.  However this contradicts \eqref{asm3}.

Hence $a_{m_\ell+2}(k_{t_\ell})=k_{t_1}=a_1(k_{t_1})$.  It follows that $s_{m_\ell+1}(k_{t_\ell})=s_1(k_{t_1})$.     By the construction of $X_1$, the sets 
$V^s(k_{t_1}),  V^s(k_{t_1})\sqcup V^s(k_{t_1+1})$, $\cdots$,  $V^s(k_{t_1})\sqcup\cdots\sqcup V^s(k_{t_1+m_1-1})$ are all horocyclically  repelling.  Hence $s_1(k_{t_1})=\cdots =s_{m_1-1}(k_{t_1})$.   By Lemma \ref{se-h-rep},
$$a_{m_\ell+1+l}(k_{t_\ell})=a_l(k_{t_1})=k_{t_1+l-1}, \ 1\leq l \leq m_1+1.$$
%$s_{s_\ell+m_\ell+1}(k_{s_\ell})=s_1(k_{s_1})=\cdots =s_{m_1-1}(k_{s_1})$.
By Proposition \ref{phase},   
$${\rm Im}\sum_{l=1}^{m_\ell+m_1+2} \tau_{a_l(k_{t_\ell})}(f_n)=2\pi \cdot {\rm Re } \ \iota(f_n, X_2\sqcup \cdots \sqcup X_{\ell-1}\sqcup\{v\})+O(1).$$

If $a_{(m_\ell+1)+(m_1+1)+1}(k_{t_\ell})\neq k_{t_2}$  (see Figure \ref{fig:reduced} (3)), then the left part of above equality tends to $-\infty$ as $n\rightarrow -\infty$. This implies that $X_2\sqcup \cdots \sqcup X_{\ell-1}\sqcup\{v\}$ is  horocyclically  repelling. Since $X_2\sqcup \cdots \sqcup X_{\ell-1}$ is  horocyclically   attracting, we conclude that $v$ is    horocyclically  repelling. 

If $a_{(m_\ell+1)+(m_1+1)+1}(k_{t_\ell})=k_{t_2}$ (see Figure \ref{fig:reduced} (4)), we may  continue this process. By the same reasoning,   we see that $v$ is either   horocyclically  repelling (see Figure \ref{fig:reduced} (4)(5)) or horocyclically   attracting (see Figure \ref{fig:reduced}  (6)).

The proof of Theorem \ref{hdc-horo} is completed. 
%\end{proof}

\vspace{5pt}
\noindent \textbf{Proof of Theorem \ref{limit-A}.}      Let $\zeta=0$ be a parabolic fixed point of $f_0$ with multiplicity $\nu+1$,    $f_0'(0)=1$.     Let   $(f_n)_n$ be a generic perturbation of $f_0$ at $\zeta$.
% By Proposition \ref{sp},  
%we may assume $f_n\in \mathcal{WB}_\phi(\mathbf G)$ for   all $n\geq 1$. 

%	Let $f_0$ have a parabolic fixed point $\zeta=0$,  $f_0'(0)=1$, and the  multiplicity of $f_0$ at $0$ is $\nu+1$.   Let   $(f_n)_n$ be generic perturbation of $f_0$ at $0$. 

Assume for each $n$ there is an $f_n$-fixed point  $\zeta_n$ so that   
$$\zeta_n\rightarrow \zeta,  \text{ and } f_n'(\zeta_n)\rightarrow 1  \text{ horocyclically}.$$    By Proposition \ref{sp} and  Lemma \ref{asm-iota},   any subsequence of $(f_n)_n$ admits a further  subsequence $(f_{n_j})_j$ with $f_{n_j}\in \mathcal{WB}_\phi(\mathbf G)$ for   all $j$,   and a vertex $v_0\in V$  (where $V$ is induced by $\mathbf G$, cf. \S \ref{h-e}),
 such that  
 $${\rm Re } \  \iota(f_{n_j}, v_0(f_{n_j}))\rightarrow +\infty  \text{ as } j\rightarrow +\infty.$$
Passing to a further subsequence of $(n_j)_j$ if necessary, we assume for each non-empty subset $X\subset V$,   one of the following holds

\begin{itemize}
	\item ${\rm Re } \  \iota(f_{n_j}, X)\rightarrow +\infty$, as $j\rightarrow +\infty$;
	
	\item  ${\rm Re } \  \iota(f_{n_j}, X)\rightarrow -\infty$, as $j\rightarrow +\infty$;
	
	\item $\sup_{j\geq 1}|{\rm Re } \  \iota(f_{n_j}, X)|< +\infty$.  
	\end{itemize}

%As we did in the  proof of  Theorem \ref{hdc-horo},
Let $v_1, \cdots, v_{\delta}$ be the vertices adjacent to  $v_0$, labeled so that the   edges   $[v_0, v_1],  \cdots, [v_0, v_{\delta}]$  are arranged in the cyclic order induced by the orientation 
 $\mathcal O(v_0)$ of $v_0$.  The set $T\setminus \{v_0\}$ consists of $\delta$ connected components. 
  The vertex set in the component %of $T\setminus \{v_0\}$  
  containing $v_s$  
  is denoted by  $Y_s$.
  For any $0\leq l\leq \delta-1$, let $Y_s^{(l)}=Y_s\sqcup Y_{s+1}\sqcup\cdots\sqcup Y_{s+l}$, here $Y_{m}$ means $Y_{m-\delta}$ if $m>\delta$.

%

%In the following, we may pass to a subsequence of   $(n_j)_j$  if necessary.

%Henceforth, we may take $(n_j)_j$ to be a subsequence satisfying the desired properties.

 {\it Claim:  There is an integer $1\leq s\leq \delta$ so that 
 	$${\rm Re } \  \iota(f_{n_j},  Y_s^{(l)} )\rightarrow -\infty, \ \forall \ 0\leq l\leq \delta-1.$$}

To prove the claim, first note  that by Lemma \ref{asm-iota} and the choice of $v_0$, % we have 
\begin{equation} \label{sum-rep}
	\sum_{s=1}^{\delta}{\rm Re } \  \iota(f_{n_j},  Y_s)=-{\rm Re } \  \iota(f_{n_j}, v_0(f_{n_j}))+O(1)  \rightarrow -\infty.
	\end{equation}

By passing to a subsequence,  the set $$Z:=\{1\leq s\leq \delta; \  {\rm Re } \  \iota(f_{n_j},  Y_s) \rightarrow -\infty \text{ as } j\rightarrow +\infty\} \neq \emptyset.$$  
By renumbering the indices, we may assume that $1\in Z$. Set 
$s_1 =1$ and let $0\leq m_1\leq \delta-1$ be the maximal integer so that
$Y_{s_1}^{(0)}, Y_{s_1}^{(1)}, \cdots, Y_{s_1}^{(m_1)}$
 %$$Y_{j_1}, Y_{j_1}\sqcup Y_{j_1+1}, \cdots, Y_{j_1}\sqcup Y_{j_1+1}\sqcup\cdots\sqcup Y_{j_1+m_1-1}=: Y_{j_1}^*$$
are all horocyclically  repelling for the sequence $(f_{n_j})_j$.
 
If $s_1+m_1=\delta$, the proof is completed. If $s_1+m_1<\delta$,   then by the choice of $m_1$,  we get
$\inf_{j\geq 1}{\rm Re } \  \iota(f_{n_j},  Y_{s_1}^{(m_1+1)})>-\infty$. This combined with   \eqref{sum-rep} implies that  $s_1+m_1+1< \delta$ and $[s_1+m_1+2, \delta]\cap Z\neq\emptyset$.

Take the minimal $s_2\in [s_1+m_1+2, \delta]\cap Z$.
%Let $s_2\geq s_1+m_1+2$ be the minimal integer in $Z$.  
Let $0\leq m_2\leq \delta-s_2$ be   maximal so that  $Y_{s_2}^{(0)}, Y_{s_2}^{(1)}, \cdots, Y_{s_2}^{(m_2)}$
%$$Y_{j_2}, Y_{j_2}\sqcup Y_{j_2+1}, \cdots, Y_{j_2}\sqcup Y_{j_2+1}\sqcup\cdots\sqcup Y_{j_2+m_2-1}$$
are all horocyclically  repelling for the sequence $(f_{n_j})_j$.  If $m_2=\delta-s_2$,  this step is completed. 
Otherwise, we have $\inf_{j\geq 1} {\rm Re } \  \iota(f_{n_j},  Y_{s_2}^{(m_2+1)})>-\infty$ and  $s_2+m_2+1<\delta$.  

Repeating this procedure, we get finitely many pairs $(s_l, m_l), 1\leq l\leq N$ so that $Y_1\sqcup Y_{2}\sqcup\cdots\sqcup Y_{\delta}$ can be decomposed into disjoint subsets
$$Y_{s_1}^{(m_1+1)}, Y_{s_1+m_1+2}, \cdots, Y_{s_2-1}, Y_{s_2}^{(m_2+1)}, Y_{s_2+m_2+2}, \cdots, Y_{s_N-1}, Y_{s_N}^{(m_N)},$$
where $s_N+m_N=\delta$.  
Note that for each set $A$ in the list, with the exception of the final one,  we have  $\inf_{j}{\rm Re } \  \iota(f_{n_j},  A)>-\infty$. By   \eqref{sum-rep} and the choices of the pairs $(s_l, m_l), 1\leq l\leq N$, we have that 
$${\rm Re } \  \iota(f_{n_j},  Y_{s_N}^{(m_N+l)})\rightarrow -\infty, \ \forall \ 0\leq l\leq s_N-1.$$

%Note that for each set $A$ in the list except the last one,   Hence  ${\rm Re } \  \iota(f_{n_k},  Y_{j_N}^{(m_N)})\rightarrow -\infty$. It follows that 
%$ Y_{j_N}^{(0)},  Y_{j_N}^{(1)}, \cdots,  Y_{j_N}^{(\delta-1)}$ are  all horocyclically  repelling.  
The proof of the claim is completed by setting $s=s_N$.

Let $k=\boldsymbol k([v_0, v_s])$, where $s$ is given by the Claim.  The choice of $s$ implies that, when applying Oudkerk's Algorithm to $k$, we get an infinite sequence. This sequence  is periodic and can be written as $(\boldsymbol k([v_0, v_{s-1+l \ {\rm mod }\ \delta}]))_{l\geq 1}$. By Proposition \ref{ak-limit2}, we have
$A_k\cap X=\emptyset, \ \forall X\in \mathcal{L}((J(f_{n_j}))_j)$. 
This finishes the proof that (1)$\Longrightarrow$(2).  By Proposition \ref{lic}, we have the uniform convergence $\gamma_{n_j,k} \rightrightarrows \gamma_k$, this shows (1)$\Longrightarrow$(3).

To show (2)$\Longrightarrow$(1) or  (3)$\Longrightarrow$(1), assume by contradiction that (1) is false,   then there is a subsequence $(n_j)_j$ so that    $\sup_{j\geq 1}|{\rm Re } \  \iota(f_{n_j}, v(f_{n_j}))|< +\infty$ for all $v\in V$. For any $k\in \mathbb Z_\nu$,  applying Oudkerk's Algorithm to $k$, we get a finite sequence (in fact the algorithm terminate at the first step).  By Proposition \ref{ak-limit1}, we have $A_k\cap X\neq \emptyset, \ \forall X\in \mathcal{L}((J(f_{n_j}))_j)$,  contradicting (2).  By Proposition \ref{lic},  there is no subsequence of $(\gamma_{n_j,k})_j$ converging to $\gamma_k$ uniformly in $[0, +\infty)$,  contradicting (3).    \hfill\fbox

% By Lemma \ref{asm},   $V^s(k_j)$ is either   horocyclically  attracting or 
%horocyclically  repelling.   %Since all adjacent edges  have same  orientation, this $e$ is independent of $j$. 
%It is clear that if  the orientation $\mathcal O(v)$ is positive cyclic order, then $s=-$,
%if   $\mathcal O(v)$ is negative cyclic order, then $s=+$.

%\section{Proof of  Theorems \ref{h-equiv} and \ref{pb-limit}}
\section{ $*$-sequence}

In this section, we analyze the (partial) horocyclic convergence for $*$-sequences.

%shall prove Theorems \ref{h-equiv} and \ref{pb-limit}.  
%Let   $(f_n)_n$ be a generic perturbation of $f_0$.   
 As before, let $\zeta=0$ be a parabolic fixed point of $f_0$ with multiplicity $\nu+1$,    $f_0'(0)=1$.      We assume   $(f_n)_n$  is a  $*$-sequence of $f_0$ at $\zeta$  and $f_n\in \mathcal{WB}_\phi(\mathbf G)$.
 %In what follows,  we adopt the notations introduced at the beginning of \S \ref{h-e}.  
 We use the same notations as we did in  \S \ref{h-e}. 
 Passing to a subsequence, we may assume that the $f_0$-fixed point $\zeta$ splits into an attracting fixed point (still assumed to be $0$) and $\nu$ distinct non-attracting fixed points for $f_n$
 %Passing to a subsequence, we assume the  $f_0$-fixed point $\zeta=0$  splits into an attracting fixed point say $0$ and  $\nu$ distinct non-attracting  fixed points of $f_n$
  (the situation that the  $f_0$-fixed point $\zeta$  splits into a repelling fixed point and  $\nu$ distinct non-repelling  fixed points of $f_n$ will be discussed in Remark \ref{repelling-plus}).  % one of which is denoted by $\zeta_n$
 % The immediate parabolic basins of $f$ at $0$ are numbered by  $A_1, \cdots, A_{\nu}$, where $A_k$   corresponds to the $k$-th attracting direction $\epsilon_{k,+}=e^{\pi i (2k-1)/\nu}$.

%For each $k \in \mathbb Z_\nu$ with $G_k\neq *$, let $v_k^+$ (resp.  $v_k^-$) be the vertex corresponding to the component $W_{v_k^+}$  (resp. $W_{v_k^-}$) which  is on the left (resp. right) when moving on the geodesic   $\ell_k(\mathbf G)$ from $\epsilon_{k,+}$ to $\epsilon_{j,-}$. 

Recall that $T = (V, E)$ is the tree  associated with $\mathbf{G}$ introduced in  \S \ref{h-e}. 
By passing to a subsequence of $(f_n)_n$, we may assume 
  $v_0$ is a distinguished vertex in $V$ so that $v_0(f_n)=0$ (hence $v_0(f_n)$ is  $f_n$-attracting). % so that $v_0(f_n)$ is the attracting point of $f_n$ for all $n$.  
Each $v\in V\setminus\{v_0\}$ can be connected to $v_0$ by a {\it minimal path}, which is the union of edges $[u_0, u_1]\cup[u_1, u_2]\cup\cdots \cup [u_{l-1}, u_l]$ with  $u_0=v, u_l=v_0$ and $l\geq 1$ being minimal.  
There is a partial order $\prec$ on $V$ (with respect to $v_0$):  we say $v'\prec v$ if either $v'=v$ or $v'$ lies on the minimal path connecting $v_0$ and $v$. Intuitively $v$ is farther from $v_0$ than $v'$ is. Clearly $v_0\prec v$ for all $v\in V$.

For any $v\in V$, let $V(v)=\{v'\in V; v\prec v'\}$. Clearly $V(v_0)=V$ and $v\in V(v)$.
  Let $T_v\subset T$ be the   subtree   whose vertex set is $V(v)$.

%\textcolor{red}{
%For each $v\in V\setminus\{v_0\}$, there is a unique $v'\in V$ so that $v'\prec v$ and $v,v'$ form two vertices of some edge.}

%Let $V_*\subset V\setminus\{v_0\}$ consists of $m$ vertices so that for each $v\in V_*$, $v$ is horocyclically  repelling.

\begin{pro} \label{find-e}
	 For each non-empty subset $V_*\subset V \backslash \{v_0\}$, satisfying that 
	$${\rm Re } \  \iota(f_n,  v(f_n))\rightarrow -\infty, \ \forall v\in V_*,$$
	there is subset of edges $E_*\subset E$ such that $\# E_*=\# V_*$ and for each $e\in E_*$, 
	Oudkerk's algorithm for  $\boldsymbol k(e)$   gives an infinite sequence.
	
	If we further require that
		$$\sup_n|{\rm Re } \  \iota(f_n,  v(f_n))|<+\infty, \ \forall v\in  V\backslash(\{v_0\}\cup V_*),$$
	then	for each  $e\in E\backslash E_*$, 
		Oudkerk's algorithm for  $\boldsymbol k(e)$   gives a finite sequence.
		\end{pro}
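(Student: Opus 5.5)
The plan is to work on the tree $T$ rooted at the distinguished attracting vertex $v_0$, using the a priori sign information that a $*$-sequence provides. For $v\neq v_0$, the point $v(f_n)$ is non-attracting, so $|\lambda_v(f_n)|\ge 1$. This gives ${\rm Re}\,\iota(f_n,v(f_n))={\rm Re}\,\frac{1}{1-\lambda_v(f_n)}\le \frac12$. Hence every vertex set $S\subset V\setminus\{v_0\}$ has ${\rm Re}\,\iota(f_n,S)$ bounded above. If moreover $S\cap V_*\neq\emptyset$, then ${\rm Re}\,\iota(f_n,S)\to-\infty$. Combining Lemma \ref{asm-iota} with this, any set $S'\ni v_0$ whose complement meets $V_*$ satisfies ${\rm Re}\,\iota(f_n,S')\to+\infty$. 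So the basic dichotomy is: a vertex set whose "side away from $v_0$'' meets $V_*$ always has divergent real index, of a sign fixed by whether it contains $v_0$.

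For each edge $e=\boldsymbol e(k)$, I would first record, via Proposition \ref{phase} and the notation $V^\pm(k)$ from the proof of Theorem \ref{hdc-horo}, that ${\rm Im}\,\tau_k(f_n)=\pm 2\pi\,{\rm Re}\,\iota(f_n,S(e))+O(1)$. Here $S(e)$ is the vertex set of the component of $T\setminus e$ not containing $v_0$. Next I would show by induction on $j$ that the partial sums $\Delta_j(f_n)$ from \eqref{delta-j} have the same form $\pm2\pi\,{\rm Re}\,\iota(f_n,S_j)+O(1)$. Here $S_j$ is the union of the sides cut off by the consecutive edges $\boldsymbol e(a_1),\dots,\boldsymbol e(a_j)$, which, as in the proof of Lemma \ref{se-h-rep}, rotate around a common vertex in the order $\mathcal O(\cdot)$. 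The inductive claim I want is the following: if $S(\boldsymbol e(a_1))\cap V_*\neq\emptyset$, then for every $j$ either $S_j\not\ni v_0$ and $S_j\cap V_*\ne\emptyset$, or $S_j\ni v_0$ and $V\setminus S_j$ meets $V_*$. By the dichotomy above, $\Delta_j$ then never stays bounded, so the algorithm never exits at step (3). Consistency with Proposition \ref{a-preperiodic} is a check: the only vertex that can satisfy ${\rm Re}\,\iota\to+\infty$ is $v_0$, so every infinite trail must eventually cycle around $v_0$.

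To obtain the first assertion, I need $\#V_*$ distinct edges whose side $S(e)$ meets $V_*$ and whose trail obeys the invariant. My first choice is to take, for each $v\in V_*$, the edge $e(v)$ joining $v$ to its $\prec$-predecessor. This is injective, and $v\in S(e(v))$, so the first step diverges. For the second assertion, with ${\rm Re}\,\iota$ bounded off $\{v_0\}\cup V_*$, I would argue conversely. If $e\notin E_*$ and $S(e)\cap V_*=\emptyset$, then $\Delta_1$ is bounded, and the algorithm exits immediately. The remaining edges, those with $S(e)\cap V_*\ne\emptyset$ but $e\notin E_*$, such as the edge from $v_0$ to a bounded vertex $u$ lying above some $v\in V_*$, are the real issue. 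For these I expect to have to adjust $E_*$: instead of the parent edges, pick in each maximal chain the edge that the trail actually uses. I would then verify the count by an induction on subtrees $T_v$, showing that each vertex of $V_*$ contributes exactly one infinite trail. In that induction, a trail starting on a non-selected edge will be shown to reach a partial sum $S_j$ whose index is a sum of bounded terms plus cancelling divergent ones, so it exits at step (3).

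The main obstacle is this bookkeeping: the invariant on $S_j$ as the trail crosses the parent edge of the current pivot vertex (where $S_j$ can switch from excluding to containing $v_0$), and the exact choice of $E_*$ that makes the count $\#E_*=\#V_*$ sharp under the second hypothesis. I would first try to settle both by analysing a single pivot vertex with its cyclically ordered subtrees, exactly as in the case analysis (1)–(3) of the proof of Theorem \ref{hdc-horo}, and then propagate down the tree by induction on $\prec$.
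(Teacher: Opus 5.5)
\textbf{There is a genuine gap: the inductive invariant you propose is false, and the parent-edge choice of $E_*$ already fails for the first assertion.}

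Several parts of your setup are sound:
\begin{itemize}
\item the bound ${\rm Re}\,\iota\le\frac12$ at non-attracting vertices, which is what justifies the paper's ``$+O(1)$'' in \eqref{equality-tau} under the first hypothesis alone;
\item the criterion that $\Delta_j$ diverges once the side of the current partition not containing $v_0$ meets $V_*$;
\item your treatment of edges with $S(e)\cap V_*=\emptyset$.
\end{itemize}

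Here is a counterexample to the parent-edge choice. Take $v\in V_*$ whose parent $w\neq v_0$ is not in $V_*$, has degree $2$ in $T$, and has ${\rm Re}\,\iota(f_n,w(f_n))$ bounded. The first hypothesis does not forbid this; for instance $\nu=2$ and $T$ the path $v_0$--$w$--$v$. For $e_1=[w,v]$, $\Delta_1$ diverges and the trail pivots at $w$. The only other edge at $w$ is its parent edge, so by Proposition \ref{phase} and Lemma \ref{asm-iota}
\[
\Delta_2=\pm 2\pi\,{\rm Re}\,\iota\big(f_n,V(v)\sqcup(V\setminus V(w))\big)+O(1)=\mp 2\pi\,{\rm Re}\,\iota(f_n,w(f_n))+O(1).
\]
This is bounded, so the algorithm exits at $r=2$.

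The general mechanism is as follows. When a trail crosses the parent edge of its pivot $w$, the side away from $v_0$ becomes $\{w\}$, plus the child subtrees of $w$ not yet visited, plus whatever was carried over. Nothing forces this set to meet $V_*$. So the step you flagged as bookkeeping is exactly where trails genuinely die. Had your invariant held, every edge on a path from $v_0$ to $V_*$ would give an infinite sequence. That would contradict the second assertion whenever such a path passes through a vertex outside $V_*\cup\{v_0\}$.

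\textbf{What is missing is the actual construction of $E_*$, and it must depend on the cyclic orders $\mathcal O(\cdot)$.} The paper builds it as follows.
\begin{itemize}
\item For each subtree at $v_0$ that meets $V_*$, mark its edge at $v_0$.
\item Descend to the first vertex $v$ that is branched or lies in $V_*$.
\item If $v\notin V_*$, mark the child edges of $v$ whose subtrees meet $V_*$, \emph{except the first one} in the order $\mathcal O(v)$ counted from the parent edge. If $v\in V_*$, mark all of them.
\item Repeat the procedure in each child subtree that meets $V_*$.
\end{itemize}
In the path example this gives $E_*=\{[v_0,w]\}$, not $\{[w,v]\}$. Note that $[v_0,w]$ is exactly the kind of edge you listed among the troublesome edges outside $E_*$.

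The count $\#E_*=\#V_*$ comes from the components of $T$ minus the open marked edges. One component contains $v_0$ and misses $V_*$; each other component contains exactly one vertex of $V_*$.

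Infinitude of the marked trails is proved with sets $Y(e_j)\subset V_*$. These grow by $V(u(e_{j+1}))\cap V_*$ while the pivot is fixed, and are replaced by $(V(u(e_{j+1}))\cap V_*)\setminus Y(e_j)$ when the trail crosses a parent edge. The marking rule is precisely what makes $Y\neq\emptyset$ at the first crossing: the skipped first subtree, or the pivot itself when it lies in $V_*$, supplies a vertex of $V_*$. Nonemptiness then propagates to later crossings, since each $Y$ at a crossing contains the one two crossings earlier.

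The same rule, through the identity in the paper's Step 4, case (2), gives finiteness for the unmarked edges. Your plan to ``pick in each maximal chain the edge the trail actually uses'' and to ``induct on subtrees $T_v$'' does not supply this rule, and neither assertion can be closed without it.
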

	\begin{proof} The proof consists of four steps.
		% we first construct the edge set $E_*$ with  cardinality  $\# E_*=\# V_*$, then show that for  each $e\in E_*$,  	Oudkerk's algorithm for  $\boldsymbol k(e)$   gives an infinite sequence.
		
		\text{\it 1.  Construction of the set $E_*$.}
		
		Let $v_1, \cdots, v_{\delta}$ be the vertices adjacent to  $v_0$, labeled so that the   edges   $[v_0, v_1],  \cdots, [v_0, v_{\delta}]$  are arranged in the cyclic order induced by the orientation 
		$\mathcal O(v_0)$ of $v_0$. Let $T^j\subset T$ be the  subtree whose vertex set is  $V(v_j)\sqcup\{v_0\}$.   
		
		%  Let $v_1, \cdots, v_{\delta}$ be the adjacent vertices of $v_0$.
		  %, numbered  so that the adjacent  edges $[v_0, v_1],  \cdots, [v_0, v_{\delta(v_0)}]$ are in   accordance with the orientation $\mathcal O(v_0)$. 
		 % The set $T\setminus \{v_0\}$ consists of $\delta$ connected components. 
		  %The  vertex set  in the component  containing $v_j$ is denoted by $X_j$. 
		   % Assume $T^1, \cdots, T^{\delta(v_0)}$ attached at $v_0$ in the  orientation of $\mathcal O(v_0)$. 
		    
		    For each  $1\leq j\leq \delta$ so that $V(v_j)\cap V_*\neq \emptyset$, mark the edge $[v_0, v_j]$. Move along the tree $T^j$ from the root point $v_0$ downward  until meeting  the  first  vertex  $v$ which is either {\it branched}  (i.e.  at least three edges meeting at this vertex) or $v\in  V_*$.
		    % (in this case, it is possible that $\delta(v)=2$).  
		      Let $u_1, \cdots, u_{\ell}$  be the adjacent vertices of $v$ in $V(v)$, labeled in the cyclic order induced by the orientation  $\mathcal O(v)$ of $v$. For  each $1\leq s\leq \ell$, let $T_v^s\subset T$ be the subtree whose vertex set is $V(u_s)\sqcup\{v\}$.   
		   % Assume $T_v^1, \cdots, T_v^{\ell}$ attach at $v$  
		    
		  %  If $V(v)\cap V_*=\emptyset$, there is nothing to do. If $V(v)\cap V_*\neq \emptyset$, we treat two cases.
		  
		  	\begin{figure}[h]
		  	\begin{center}
		  		\includegraphics[height=5cm]{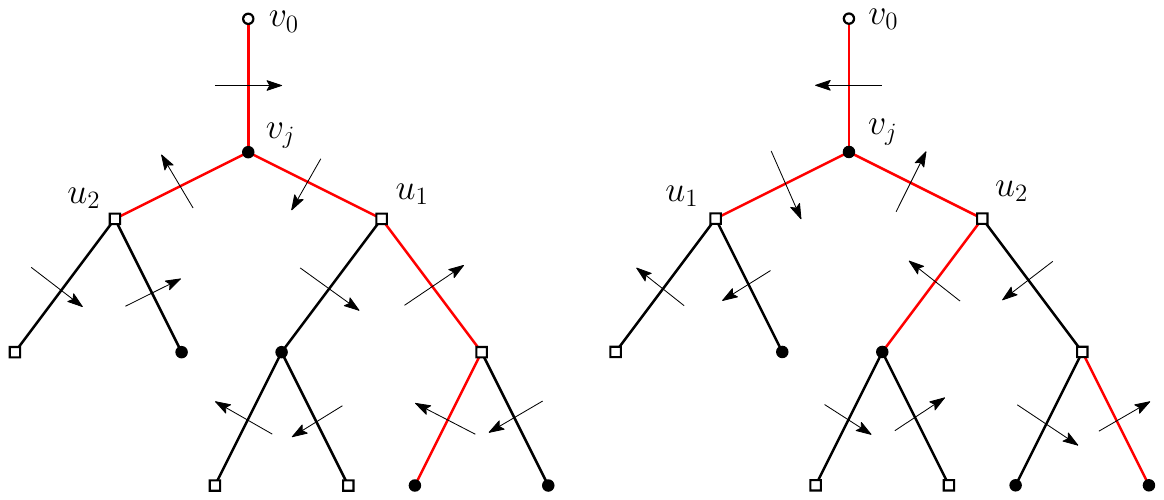}
		  	\end{center}
		  	\caption{The marked edges (in red) in the subtree $T^j$, the arrows are the orientation of the edges.  $v_0$ is marked $\circ$, horocyclically  repelling vertices in $V(v_j)\cap V_*$ are marked $\bullet$ and all other vertices are marked as small squares.}
		  	\label{fig:marked}
		  \end{figure}

		    \textbf{Case 1: $v$ is branched and $v\notin  V_*$.}
		    
		    Since $V(v)\cap V_*\neq \emptyset$ and  $v\notin  V_*$, we get $(V(u_1)\sqcup\cdots \sqcup V(u_\ell))\cap V_*\neq \emptyset$. 
		     Let $1\leq j_1<\cdots< j_m\leq \ell$ be all indices so that $V(u_{j_k})\cap V_*\neq  \emptyset$, where $1\leq m\leq \ell$.
		    If $m=1$, there is nothing to do. If $m\geq 2$, we mark the following edges 
		    $$[u_{j_2}, v], [u_{j_3}, v], \cdots, [u_{j_m}, v].$$
		    
		      \textbf{Case 2:  $v\in  V_*$.}
		      
		      If  $V(v)=\{v\}$ or  $(V(u_1)\sqcup\cdots \sqcup V(u_\ell))\cap V_*=\emptyset$, there is nothing to do. Otherwise,    let $1\leq j_1<\cdots< j_m\leq \ell$ be all indices so that $V(u_{j_k})\cap V_*\neq  \emptyset$, where $1\leq m\leq \ell$.  We then mark the following edges 
		    $$[u_{j_1}, v], [u_{j_2}, v], \cdots, [u_{j_m}, v].$$
		    
		    Now we get some additional marked edges in this step. Move along the subtrees  $T_v^s$ with marked edges from the root point $v$ downward until meeting the first     vertex    which is either branched  or in $V_*$. Repeating the marking procedure as above, 
		    the process terminates after finitely many steps. %, and we get all marked edges.
		    
		    Let $E_*$ be the collection of all marked edges. For each $e=[v,v']\in E_*$, let $\widehat{e}=e\setminus\{v,v'\}$ be  the interior of $e$.  By the construction of $E_*$, the set $T\setminus \bigcup_{e\in E_*}\widehat{e}$ consists of $\# E_*+1$ connected components. Note that each component is either the one containing $v_0$ and avoiding the set $V_*$, or the one    containing precisely one vertex in $V_*$, hence   $\# E_*+1=\#V_*+1$.  Equivalently $\# E_*=\#V_*$. See Figure \ref{fig:marked}.
		    
	%	 \textbf{Claim:   The set $T\setminus \bigcup_{e\in E_*}\widehat{e}$ consists of $\# V_*$ connected components. Each component contains precisely one vertex in $V_*$. }
	
		\text{\it 2. Edge sequences.}
		
		 For each $e\in E$, let $v(e)$ and $u(e)$ be two endpoints of $e$ so that $v(e)\prec u(e)$.
		 % where $u$ is the other endpoint of $e$. 
		  We may associate a sequence   of edges 
		 $$\bs(e)=(b_j(e))_{j\geq 1}=(e_j)_{j\geq 1} \in E^{\mathbb N}$$
		 so that $e_1=e$, and for each $j\geq 1$, 
\begin{itemize}
	\item  $e_j$ and $e_{j+1}$ have a common endpoint $v(e_{j})$;
	
	\item  $e_j$ and $e_{j+1}$  are adjacent and attach at $v(e_{j})$   in cyclic order induced by the orientation  $\mathcal O(v(e_{j}))$;
	
	\item $v(e_{j+1})\prec v(e_{j})$.
	\end{itemize}		 
See Figure \ref{fig:es1}.
		 
		 	\begin{figure}[h]
		 	\begin{center}
		 		\includegraphics[height=5cm]{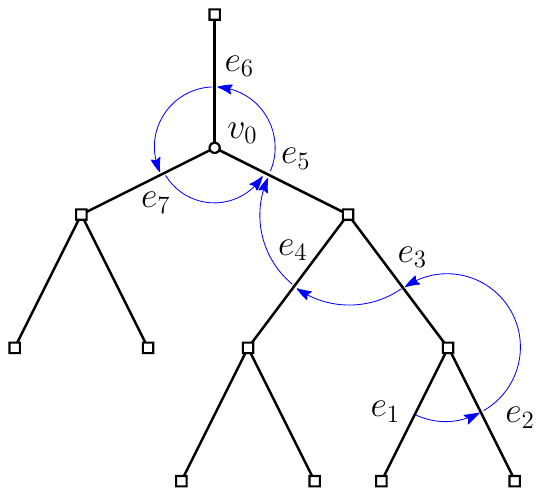}
		 	\end{center}
		 	\caption{An example of edge sequence $(e_j)_{j\geq 1}=(e_1, e_2, e_3, e_4, \overline{e_5, e_6, e_7})$. $v_0$ is marked $\circ$, and all other vertices are marked as small squares.}
		 	\label{fig:es1}
		 \end{figure}
		 
	By definition, the edge sequence   $\bs(e)$ is preperiodic:  there are integers $m\geq 0$ and $p\geq 1$ so that $e_{m+p+j}=e_{m+j}$ for all $j\geq 1$.
		 The minimal $m, p$ are called the {\it pre-period} and the  {\it period} of $\bs(e)$, respectively.
		 It's clear that for the minimal $p$,  $e_{m+1}, \cdots, e_{m+1+p}$ are precisely the   edges attached at $v_0$, in the orientation $\mathcal O(v_0)$.   Hence the periods of all edge sequences are same.
		 %Hence we always have $p=\delta(v_0)$.
	%	 The periodic part of $\bs(e)$ is formed by the finite .

		  	{\it 3.  For each $e\in E_*$, Oudkerk's algorithm for  $\boldsymbol k(e)$   gives an infinite 	sequence $\boldsymbol{a}(\boldsymbol k(e))=(a_j(\boldsymbol k(e)))_{j\geq 1}$, related to the edge sequence  $\bs(e)$ as follows
		  		$$a_j(\boldsymbol k(e))=\boldsymbol k(b_j(e)), \ \forall j\geq 1.$$}
	%  		For each edge $e\in E$,  %if $v(e)$ is branched and $v(e)\notin  V_*$, 
	  %		 following the notations in the beginning, let $u_1, \cdots, u_{\ell}$  be the adjacent vertices of $v(e)$ in $V(v(e))$, numbered in the  orientation of $\mathcal O(v(e))$, here $\ell=\delta(v)-1$, and assume $e=[v(e), u_{l(e)}]$ for some $l(e)\in [1, \ell]\cap \mathbb N$.
	  	%	Define $X(e)=(V(u_1)\sqcup\cdots\sqcup V(u_{l(e)}))\cap V_*$.
	  	
	  	For each $e\in E_*$, let $m_e$ be the pre-period of   $\bs(e)=(e_j)_{j\geq 1}$ and let $X(e_j):=\big(\bigcup_{1\leq s\leq j}V(u(e_s))\big)\cap V_*$.  Note that $X(e_1)\neq \emptyset$ and $X(e_j)\subset X(e_{j+1})$, hence  $X(e_j)\neq\emptyset$ for all $j$.
	  		
	%  		We first prove the following
	  		
	  	{\it 	Claim:  For each $1\leq j\leq m_e+1$,   there is a non-empty set $Y(e_j)\subset X(e_j)$ and $c_j\in \{\pm 1\}$ so that 
	  		\begin{equation} \label{equality-tau}
	  			\sum_{s=1}^j \tau_{\boldsymbol k(e_s)}(f_n)= 2\pi i c_j \cdot \iota(f_n, Y(e_j))+O(1),   \text{ for large } n.
	  				\end{equation} }
	  	%	\begin{itemize}
	  	%		\item $X(e_j):=\big(\bigcup_{1\leq s\leq j}V(u(e_s))\big)\cap V_*\neq\emptyset$;
	  			
	  	%		\item
	  		%
	  			%		\item    $Y(e_j)\neq X(e_j)$ if $v(e_j)\notin V_*$.
	  		%	\end{itemize}
  			
  				\begin{figure}[h]
  				\begin{center}
  					\includegraphics[height=5cm]{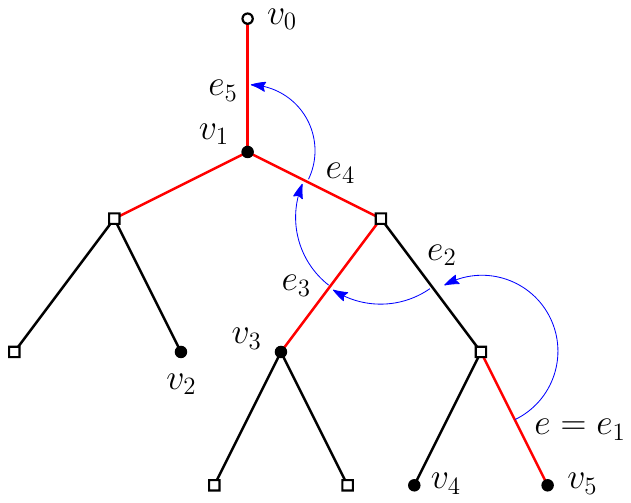}
  				\end{center}
  				\caption{In this example, $V_*\cap V(v_1)=\{v_1, v_2, v_3, v_4, v_5\}$. The pre-period for the edge sequence $(e_j)_{j\geq 1}$ of $e$ is $m_e=4$.  $X(e_1)=Y(e_1)=\{v_5\}$; $X(e_2)=\{v_4, v_5\}$, $Y(e_2)=\{v_4\}$;  $X(e_3)=\{v_3, v_4, v_5\}$, $Y(e_3)=\{v_3, v_4\}$; $X(e_4)=\{v_3, v_4, v_5\}$, $Y(e_4)=\{v_5\}$; $X(e_5)=\{v_1, v_2, v_3, v_4, v_5\}$, $Y(e_5)=\{v_1, v_2, v_3, v_4\}$.}
  				\label{fig:es}
  			\end{figure}
  			
  		%	To se, it suffices to
  			 % by the choice of the marked edges.  
  			
  			Let $Y(e_1)=V(u(e_1))\cap V_*=X(e_1)$. Define $Y(e_j)$ for $1\leq j\leq m_e+1$ inductively as follows:
  			if $v(e_{j+1})=v(e_j)$,  set 
  			$$Y(e_{j+1})=(V(u(e_{j+1}))\cap V_*)\sqcup Y(e_j); $$
  			if $v(e_{j+1})\neq v(e_j)$, then $ v(e_j)=u(e_{j+1})$,  set
  				$$Y(e_{j+1})=(V(u(e_{j+1}))\cap V_*)\setminus Y(e_j).$$
  				By this definition and Proposition \ref{phase}, \eqref{equality-tau}   can be verified inductively. 
  				
  				It remains to show that $Y(e_j)\neq \emptyset$. By the choice of the marked edge,   $Y(e_1)=X(e_1)$, and  the relation   $Y(e_j)=X(e_j)$ preserves until we meet the first $j'\geq 1$ so that $v(e_{j’+1})\neq v(e_{j'})=v(e)$.  By  definition, $Y(e_{j'+1})=X(e_{j'+1})\setminus Y(e_{j'}) \subsetneq  X(e_{j'+1})$.
  					If $v(e)\notin V_*$, then  $Y(e_{j'+1})\neq \emptyset$ (see Case 1 in Step 1); if $v(e)\in V_*$, then 
  				$Y(e_{j'+1}) 	\supset \{v(e)\}\neq \emptyset$.
  				In either case, repeating the arguments, the conclusion follows.   See Figure \ref{fig:es}.

  			%	$$Y(e_{j+1})=X(e_{j+1})\setminus Y(e_j) \begin{cases}\neq \emptyset,  &\text{ if }v(e)\notin V_*; \\
  		%		\supset \{v(e)\}\neq \emptyset,  & \text{ if } v(e)\in V_*.
  			%	\end{cases} $$

  	%		\textcolor{red}{Begin here}

  		%		If $v(e)\in V_*$,  by the choice of the marked edge,   $\emptyset  \neq Y(e) \subset X(e)$. 
%  			The relation   $\emptyset  \neq Y(e_j) \subset X(e_j)$ preserves until we meet the first $j$ so that $v(e_{j+1})\neq v(e_j)=v(e)$.  By the definition of $Y(e_{j+1})$
  %			$$\emptyset\neq \{v(e_j)\} \subset Y(e_{j+1})\subset X(e_{j+1})\setminus Y(e_j) \subsetneq  X(e_{j+1}).$$

  		%	Since $v(e_j)=v(e)$ for $1\leq j\leq \ell-l(e)+1$, we get  \eqref{equality-tau}  and $Y(e_j)\neq X(e_j)$. Note that when $j= \ell-l(e)+1$, we have
  		%\bess &&\sum_{s=1}^j \tau_{\boldsymbol k(e_s)}(f_n)+\tau_{\boldsymbol k(e_{j+1})}(f_n)\\
  	%	&=& 2\pi i c_j \cdot \iota(f_n, Y(e_j))-2\pi i c_j \cdot \iota(f_n, V(u(e_{j+1}))\cap V_*) +O(1)\\
  	%	&=& -2\pi i c_j \cdot \iota(f_n, Y(e_{j+1})) +O(1).\eess
%Hence   	\eqref{equality-tau}  holds for $j+1$ and $Y(e_{j+1})\neq X(e_{j+1})$.		

		  To finish, we show that when $e\in E_*$, Oudkerk's algorithm for  $\boldsymbol k(e)$   gives an infinite 	sequence  
		  %we show that Oudkerk's algorithm for  $\boldsymbol k(e)\in \mathbb Z_\nu$   gives an infinite 	sequence 
		  $\boldsymbol{a}(\boldsymbol k(e))=(a_j(\boldsymbol k(e)))_{j\geq 1}$ satisfying the required properties.   
		  By \eqref{equality-tau},   for each $1\leq j\leq m_e+1$, 
		  $${\rm Im}\sum_{s=1}^j \tau_{\boldsymbol k(e_s)}(f_n)=2\pi c_j   {\rm Re } \ \iota(f_n, Y(e_j))+O(1).$$
		 % Hence $Y(e_j)$ is horocyclically  repelling. 
	
	 For $j>m_e+1$,  by \eqref{equality-tau} and  Proposition \ref{phase},
	 $${\rm Im} \sum_{s=1}^j \tau_{\boldsymbol k(e_s)}(f_n)= 2\pi  c_{m_e+1}  {\rm Re }
	  \Big(\iota(f_n, Y(e_{m_e+1}))+\sum_{s=m_e+2}^j \iota(f_n,  V(u(e_s))\cap V_*) )\Big)+O(1).$$
	  
	 One may verify that $c_j=-1$ if the orientation  $\mathcal O(v(e_{j}))$ is positive cyclic order,  $c_j=+1$ if the orientation  $\mathcal O(v(e_{j}))$ is negative cyclic order. Note also that 
	  $c_j=c_{m_e+1}$ for  $j\geq m_e+1$.  Hence as $n\rightarrow +\infty$,  
	  $${\rm Im} \sum_{s=1}^j \tau_{\boldsymbol k(e_s)}(f_n)\rightarrow \begin{cases}
	  	+\infty,  &\text{ if }  \mathcal O(v(e_{j}))  \text{ is positive cyclic order}; \\
	  	-\infty,  &\text{ if }  \mathcal O(v(e_{j}))  \text{ is negative cyclic order}.
	  \end{cases}$$
	
	By the definition of the $\boldsymbol{a}$-sequence and  $\boldsymbol{b}$-sequence,  we see that
	     $a_j(\boldsymbol k(e))=\boldsymbol k(b_j(e))$ for all $j$ by induction. 
	 
		% 
		  %	For each $v_j$ which satisfies the condition of Lemma \ref{h2}, let $[v_j, u_j]$ be the minimal path avoiding the vertices in $\bigsqcup_{j\geq 3}V_j$ with $u_j\in V_1$. The vertex set along $[v_j, u_j]$
		 % can be written as $V^e(k_j)$ for some unique $k_j\in \mathbb Z_\nu$ and for some $e\in \{\pm\}$. 
		 
			{\it  4.   Edges in	 $E\backslash E_*$.}

		We assume $\inf_n{\rm Re } \  \iota(f_n,  v(f_n))>-\infty$ for $v\in  V\backslash(\{v_0\}\cup V_*)$. By the construction of $E_*$, for any $e\in E\backslash E_*$, there are two possibilities
		 
		 (1).  $V(u(e))\cap V_*= \emptyset$, or
		 
		 (2). $V(u(e))\cap V_*\neq \emptyset$, $v(e)\notin V^*$.
		 
		 In case (1),   by Proposition \ref{phase}, 
		 \bess  \tau_{\boldsymbol k(e)}(f_n)=\pm 2\pi i  \cdot \iota(f_n,  V(u(e)))+O(1)=O(1).
		 \eess
		 This implies that Oudkerk's algorithm terminates at $r=1$.
		 % and we get a finite sequence for $\boldsymbol k(e)$.
		 
		  In case (2), note that $e$ corresponds to the edge $[u_{j_2}, v]$ of Case 1 in Step 1.
		  We  consider the edge sequence $\boldsymbol b(e)=(e_j)_{j\geq 1}$. There is a minimal integer $\ell\geq 1$ so that
		  $v(e_1)=\cdots=v(e_\ell)$ and $v(e_{\ell+1})\neq v(e_\ell)$. By   assumption $V(u(e))\cap V_*\neq \emptyset$ and applying Oudkerk's algorithm  to $\boldsymbol k(e)$,  we get a finite sequence $a_1(\boldsymbol k(e)), \cdots,  a_{\ell+1}(\boldsymbol k(e))$ with   $a_j(\boldsymbol k(e))=\boldsymbol k(b_j(e)), \ \forall 1\leq j\leq \ell+1$. 
		  By the assumption $e\in E\backslash E_*$ and $v(e)\notin V^*$, we have that 
		  $$V(u(e_{\ell+1}))\cap V_*=(V(u(e_1))\sqcup\cdots \sqcup V(u(e_\ell)))\cap V_*.$$
		  Again by Proposition \ref{phase}, 
		 $$ \sum_{j=1}^{\ell+1} \tau_{a_j(\boldsymbol k(e))}(f_n)=\pm 2\pi i \cdot \iota\Big(f_n, V(u(e_{\ell+1}))\backslash \bigsqcup_{1\leq s\leq \ell}V(u(e_s))\Big)+O(1)=O(1).
		$$
		Hence Oudkerk's algorithm terminates at $r=\ell+1$.
		% and we get a finite sequence.
		
		The proof is completed.
		 % we shall show that for each
		 %Oudkerk's algorithm for  $\boldsymbol k(e)$   gives 
		\end{proof}

	\begin{rmk}\label{repelling-plus}   If the  $f_0$-fixed point $\zeta=0$  splits into a repelling fixed point $0$ and  $\nu$ distinct non-repelling  fixed points of $f_n$, we still let
		$v_0$ denote the distinguished vertex in $V$ so that $v_0(f_n)=0$.   Proposition \ref{find-e} reads as:   for any non-empty subset $V_*\subset V \backslash \{v_0\}$ satisfying that 
		$${\rm Re } \  \iota(f_n,  v(f_n))\rightarrow +\infty, \ \forall v\in V_*,$$
		there is a subset of edges $E_*\subset E$ such that $\# E_*=\# V_*$ and for each $e\in E_*$,  
		Oudkerk's algorithm for  $\boldsymbol k(e)$   gives an infinite sequence. 
		
			\begin{figure}[h]
			\begin{center}
				\includegraphics[height=5cm]{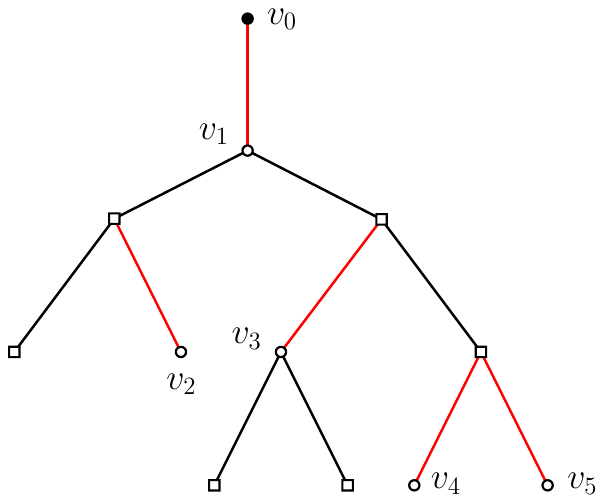}
			\end{center}
			\caption{In this example, $V_*=\{v_1, v_2, v_3, v_4, v_5\}$,  the set $E_*$ consists of five red edges.}
			\label{fig:e-choice}
		\end{figure}
		
		The construction of $E_*$ is as follows:  for each $v\in V_*$, there is a unique $v'\in V$ so that $v'\prec v$ and $[v', v]$ is an edge.   Let $E_*=\{[v',v]; v\in V_*\}$. 
		See Figure \ref{fig:e-choice}.		One may verify that  for each $e\in E_*$, Oudkerk's algorithm for  $\boldsymbol k(e)$   gives an infinite 	sequence $\boldsymbol{a}(\boldsymbol k(e))=(a_j(\boldsymbol k(e)))_{j\geq 1}$.
	\end{rmk}
	
	\section{The proofs}\label{proofs}
	
	This section gives the proofs of Theorems  \ref{h-equiv},  \ref{pb-limit}, \ref{pb-limit1}, \ref{ext-ray} and \ref{appe} in \S \ref{int}.
	
	In the following, when we are talking about a  parabolic periodic point $\zeta$ of $f_0$, we make the assumptions: 
   replacing $f_0$ by some iterate $f_0^l$, we assume  
	$f_0(\zeta)=\zeta$, $f_0'(\zeta)=1$, and the multiplicity of $f_0$ at   $\zeta$ is  $\nu+1\geq 2$.   By changing coordinate, we assume $\zeta=0$.  Recall that $A_k$ is the $k$-th immediate parabolic basin of $\zeta$.
	We shall use the notations given in \S \ref{h-e}.

\begin{proof}[Proof of Theorem \ref{h-equiv}]  Let $(f_n)_n$ be a  $*$-sequence of $f_0$. 
	By Theorem \ref{hdc-horo},  the convergence of Julia sets $J(f_{n})\rightarrow J(f_0)$ implies that $f_n\rightarrow f_0$  horocyclically.
	
	% is a type of generic sequence,  we have $J(f_{n})\rightarrow J(f_0) \Longrightarrow  f_n\rightarrow f_0   \  \text{ horocyclically}$  by 
	
	Since $f_0$ has no rotation domains, each periodic  Fatou component of $f_0$ is either attracting or parabolic \cite[\S 16]{Mil06}. Note that   each attracting Fatou component $U$ of $f_0$ has no intersection with any $X\in \mathcal{L}((J(f_{n}))_n)$ \cite{D}.

	To show that $f_n\rightarrow f_0$ horocyclically implies that $J(f_{n})\rightarrow J(f_0)$,   it suffices to show that  each parabolic Fatou component
	 of $f_0$ has no intersection with any $X\in \mathcal{L}((J(f_{n}))_n)$.
	% $A$, $A\cap \limsup_{n\rightarrow \infty} J(f_n)= \emptyset$. 
	
	%$  f_n\rightarrow f_0   \  \text{ horocyclically} \Longrightarrow J(f_{n})\rightarrow J(f_0)$. 
	For the $f_0$-parabolic  point $\zeta=0$, let $V,  A_k$ be given by \S \ref{h-e}.
  Applying Proposition \ref{find-e} to the case $V_*=V\backslash \{v_0\}$, we get $E_*=E$. Combining with  Proposition   \ref{ak-limit2},  we have that for any $k\in \mathbb Z_\nu$,  $A_{k}\cap X= \emptyset, \ \forall X\in  \mathcal{L}((J(f_{n}))_n)$. 
	
		The conclusion follows by  letting $\zeta$ range over all $f_0$-parabolic points.
%	Theorem \ref{pb-limit} follows by taking the basins $A_{\boldsymbol k(e)}, e\in E_*$.
\end{proof}

\begin{proof}[Proof of Theorem   \ref{pb-limit}]   (1)$\Longrightarrow$ (2) and (3).  Suppose $f_n\rightarrow f_0$ $\ell$-horocyclically at $\zeta$.  For every subsequence of $(f_n)_n$, there exists a further subsequence    $(f_{n_j})_j$ satisfying that 
	\begin{itemize}
		\item  $f_{n_j}\in \mathcal{WB}_\phi(\mathbf G)$ for some $\phi, \mathbf G$ and for all $j\geq 1$ (by Proposition \ref{sp}). 
		
		\item $v_0\in V$ is  horocyclically attracting (resp. repelling) for $(f_{n_j})_j$.
		
		\item there is a subset $V_*\subset V\backslash \{v_0\}$ with cardinality $\# V_*=\ell$ so that all $v\in V_*$ are horocyclically repelling (resp.  attracting) for $(f_{n_j})_j$.
		\end{itemize}
	
	%(by Proposition \ref{find-e} and Remark \ref{repelling-plus}).
	
	 By Propositions \ref{find-e} (as well as Remark \ref{repelling-plus}), \ref{ak-limit1}, \ref{ak-limit2} and \ref{lic}, 
	 %passing to a further   subsequence   of $(f_{n_j})_j$ if necessary,  
	  there is a set $E_*\subset E$  with  cardinality $\# E_*=\ell$ so that  for any $e\in E_*$, 
	$$A_{\boldsymbol k(e)}\cap X= \emptyset, \ \forall X\in  \mathcal{L}((J(f_{n_j}))_j); \  \gamma_{n_j, \boldsymbol k(e)} \rightrightarrows \gamma_k,$$
	and for any $e\in E\setminus E_*$, 
	$$A_{\boldsymbol k(e)}\cap X\neq \emptyset, \ \forall X\in  \mathcal{L}((J(f_{n_j}))_j); \  \gamma_{n_j, \boldsymbol k(e)} \not\rightrightarrows \gamma_k.$$
	The statements (2) and (3) follow by taking   $\mathcal I=\{\boldsymbol k(e); e\in E_*\}$.
	
	(2) or (3) $\Longrightarrow$ (1). If $f_n\rightarrow f$ is not $\ell$-horocyclically at $\zeta$, then there exist a subsequence $(f_{n_j})_j$  and an integer $\ell'\neq \ell$ so that 	  $f_{n_j}\rightarrow f$   $\ell'$-horocyclically at $\zeta$. 
	By the previous argument, there exist a further subsequence $(f_{m_j})_j$ of $(f_{n_j})_j$ and an index set $\mathcal{I}'$ with cardinality $\ell'$, satisfying that
	 %By the previous argument, there exist a further sequence $(f_{m_j})_j$ of $(f_{n_j})_j$ and an  index set $\mathcal I'$ with  cardinality $\# \mathcal I'=\ell'$ so that
	$$\begin{cases}   A_k\cap X=\emptyset, \ \forall X\in  \mathcal{L}((J(f_{m_j}))_j);   \gamma_{m_j, k} \rightrightarrows \gamma_k, &\text{ if }  k\in \mathcal I'; \\
		A_k\cap X\neq \emptyset,   \ \forall X\in  \mathcal{L}((J(f_{m_j}))_j);  
		\gamma_{m_j, k} \not\rightrightarrows \gamma_k, & \text{ if }  k\notin \mathcal I'.
	\end{cases}$$
The sequence $(f_{m_j})_j$ contains no further subsequence satisfying (2) and (3). This is a contradiction.
%	However this contradicts both (2) and (3). 
%	  for any
%	$k\in \mathcal I'$, 
%	$$A_{\boldsymbol k(e)}\cap \limsup_{j\rightarrow \infty} J(f_{n_j})= \emptyset, \  \gamma_{n_j, \boldsymbol k(e)} \rightrightarrows \gamma_k;$$
%	for any $e\in E\setminus E_*$, 
%	$$A_{\boldsymbol k(e)}\cap \liminf_{j\rightarrow \infty} J(f_{n_j})\neq  \emptyset, \  \gamma_{n_j, \boldsymbol k(e)} \not\rightrightarrows \gamma_k;.$$
	\end{proof}

\begin{proof}[Proof of Theorem \ref{pb-limit1}]   (1)$\Longrightarrow$ (2) and (3). 
Note that 	$f_n\rightarrow f_0$ horocyclically at $\zeta$ is equivalent to say that 	$f_n\rightarrow f_0$ $\nu$-horocyclically at $\zeta$.
	 By  Theorem   \ref{pb-limit},   any subsequence  of $(f_n)_n$ contains a further subsequence $(f_{n_j})_j$ so that 
	$$X\cap (A_1\cup \cdots \cup A_\nu)=\emptyset, \ \forall X\in  \mathcal{L}((J(f_{n_j}))_j);  \  \gamma_{n_j, k} \rightrightarrows \gamma_k, \ \forall k\in \mathbb Z_\nu.$$ 
	It follows that the whole sequence   $(f_{n})_n$ satisfies that 
		$$X\cap (A_1\cup \cdots \cup A_\nu)=\emptyset, \ \forall X\in  \mathcal{L}((J(f_{n}))_n);  \  \gamma_{n, k} \rightrightarrows \gamma_k, \ \forall k\in \mathbb Z_\nu.$$ 
%	$$ \limsup_{j\rightarrow \infty} J(f_{n})\cap (A_1\cup \cdots \cup A_\nu)=\emptyset;  \  \gamma_{n, k} \rightrightarrows \gamma_k, \forall k\in \mathbb Z_\nu.$$ 
	
Applying Theorem   \ref{pb-limit} to the case $\ell=\nu$, we get	(2) or (3) $\Longrightarrow$ (1). %follows from Theorem   \ref{pb-limit}.
\end{proof}

\begin{proof}[Proof of Theorem \ref{ext-ray}] 
	%  By Mobius conjugation and replacing $f_0$ by some iterate $f_0^m$,  we assume $f_0(z)=z+z^{\nu+1}+O(z^{\nu+1})$ near $0$, and 
	We assume the parabolic fixed point $\zeta=0$ of $f_0$ splits into the fixed points of $f_n$, one of which is $0$.  Take two small disks $U, V$ centered at $0$ so that $f_n|_U, f_0|_U$ are univalent, and their images $f_n(U), f_0(U)$ contain $V$.
% Consider the inverse sequence $(f_n|_{V}^{-1})_n$.
 Note that   $\lambda_n\rightarrow 1$ horocyclically if and only if    $1/\lambda_n\rightarrow 1$ horocyclically.  This implies that $f_n\rightarrow f_0$ $\ell$-horocyclically at $0$ if and only if the inverse sequence  $f_n^{-1}|_{V} \rightarrow f_0^{-1}|_{V}$ $\ell$-horocyclically at $0$.
 %  of the local germs of $f_n$ near parabolic fixed points. 

%The tails of the invariant rays.
	Let $\gamma_{n,k}$ and $\gamma_{k}$ be the tails (i.e. whose Green potentials $\leq \epsilon_0$ for some small $\epsilon_0>0$) of the external rays $R_{f_n}(\theta_k)$ and  $R_{f_0}(\theta_k)$ respectively, so that $\gamma_{k}\subset V$  (it is possible that $\gamma_{n,k}$ is not contained in $V$) and $\gamma_{n,k}$ converges locally and uniformly to $\gamma_k$ (with suitable parameterization). 
	The conclusion follows from Theorem \ref{pb-limit}.
	\end{proof}

\begin{rmk} \label{ext-generic} Let $f_n, f_0\in \mathcal C_d$. Replacing $f_0$ by some iterate, we assume $\zeta$ is an $f_0$-parabolic fixed point. Recall that $\Theta$ consists of all angles $t\in \mathbb R/\mathbb Z$ for which $R_{f_0}(t)$ lands at $\zeta$.
If $(f_n)_n$ is  a generic perturbation of $f_0$ at $\zeta$, then the following two statements are equivalent:
	
	(1).  For each $n$,  there is an $f_n$-fixed point  $\zeta_n$ so that   
	$$\zeta_n\rightarrow \zeta,  \text{ and } f_n'(\zeta_n)/f'_0(\zeta)\rightarrow 1  \text{ horocyclically}.$$
	
	(2). For every subsequence of $(f_n)_n$, there exist a further subsequence $(f_{n_j})_j$ and a  $\theta\in \Theta$  such that $\overline{R_{f_{n_j}}(\theta)}\rightarrow \overline{R_{f_0}(\theta)}$ in Hausdorff topology.
	
This implies,	in particular, that $\overline{R_{f_0}(\theta)}\subsetneq X,  \forall X\in \mathcal{L}((\overline{R_{f_n}(\theta)})_n)$ for all   $\theta\in \Theta$ if and only if for any $f_n$-fixed point  $\zeta_n$ satisfying that $\zeta_n\rightarrow \zeta$,  we have
$$\sup_{n\geq 1}\Big|{\rm Re}\Big(\frac{f'_0(\zeta)}{f'_0(\zeta)-f_n'(\zeta_n)}\Big)\Big|< +\infty.$$
\end{rmk}

\begin{proof} Consider the tails $\gamma_{n,k}$ and $\gamma_{k}$  of  the external rays $R_{f_n}(\theta_k)$ and  $R_{f_0}(\theta_k)$  respectively, the conclusion follows from Theorem \ref{limit-A}  (1)$\Longleftrightarrow$ (3).
\end{proof}

%\begin{rmk} \label{ext-generic2} Let $f_n, f_0\in \mathcal C_d$.  Let $\zeta$ be a  $f_0$-parabolic periodic  point, and assume  $(f_n)_n$ is a generic perturbation of $f_0$ at $\zeta$.   Recall that  $\Theta$ consists of all angles $t\in \mathbb R/\mathbb Z$ for which $R_{f_0}(t)$ lands at $\zeta$.  If   $f_n\rightarrow f_0$  horocyclically at $\zeta$, then $\overline{R_{f_{n}}(\theta)}\rightarrow \overline{R_{f_0}(\theta)}$ in Hausdorff topology for all $\theta\in \Theta$.
	
%	If  $(f_n)_n$ is a $*$-sequence of $f_0$ at $\zeta$,  then  $f_n\rightarrow f_0$  horocyclically at $\zeta$ if and only if $\overline{R_{f_{n}}(\theta)}\rightarrow \overline{R_{f_0}(\theta)}$ in Hausdorff topology for all $\theta\in \Theta$.
%	If  $(f_n)_n$ is a generic perturbation of $f_0$ at $\zeta$,  then 
	
%	 $\overline{R_{f_{n}}(\theta)}\rightarrow \overline{R_{f_0}(\theta)}$ in Hausdorff topology for all $\theta\in \Theta_{\nu}$, here $\Theta_\nu$ is defined in \S \ref{int} (before Theorem \ref{ext-ray}), then  $f_n\rightarrow f_0$  horocyclically at $\zeta$.
%\end{rmk}

%We remark that by Theorem \ref{ext-ray}, if  $(f_n)_n$ is a $*$-sequence of $f_0$ at $\zeta$,  then  $f_n\rightarrow f_0$  horocyclically at $\zeta$ if and only if $\overline{R_{f_{n}}(\theta)}\rightarrow \overline{R_{f_0}(\theta)}$ in Hausdorff topology for all $\theta\in \Theta$.

\begin{proof} [Proof of Theorem \ref{ext-ray0}]  For each $k\in \mathbb Z_\nu$,  consider the tails $\gamma_{n,k}$ and $\gamma_{k}$  of  the external rays $R_{f_n}(\theta_k)$ and  $R_{f_0}(\theta_k)$  respectively, where $\theta_k$ is an angle in $\Theta_\nu$ so that $R_{f_0}(\theta_k)$ lands at $\zeta$ in the $k$-th repelling direction.
	By Proposition \ref{lic} and considering  inverse germs $f_n^{-1}, f_0^{-1}$, 
 Oudkerk's algorithm for $k\in \mathbb Z_\nu$ never terminates. The conclusion follows from the same argument as that of Theorem   \ref{hdc-horo}.
 The last statement follows from Theorem \ref{ext-ray}.
\end{proof}

\begin{proof}[Proof of Theorem \ref{appe}] Note that  $f_{\lambda_0}$ has precisely one $q$-cycle of immediate parabolic basins,  since $f_{\lambda_0}$ has only one critical point in $\mathbb C$.
 It follows that $g_0=f_{\lambda_0}^q$ takes the form 
  $g_0(z)=z+az^{q+1}+O(z^{q+2})$ near $0$, where $a\neq 0$. For a sequence $\lambda_n\rightarrow \lambda_0$ with $\lambda_n\neq \lambda_0$, note that $g_n=f_{\lambda_n}^q$   is a generic perturbation of $g_0$. Moreover,  $g_n$ has a fixed point $0$ with multiplier $\lambda_n^q$, and $q$ fixed points with  same mulitipliers $\mu_n$ (these $q$ fixed points correspond to a $q$-periodic orbit of $f_{\lambda_n}$). Hence by \eqref{asm-iota},
  \begin{equation}\label{sum-i}
  \frac{q}{1-\mu_n}+\frac{1}{1-\lambda_n^q}\rightarrow \iota(g_0, 0).
  \end{equation}

  If $J(f_{\lambda_n})\rightarrow J(f_{\lambda_0})$, by Theorem \ref{hdc-horo}, $g_n\rightarrow g_0$ horocyclically.   In particular,   $|{\rm Re}(1/({\lambda_n^q-1}))|\rightarrow +\infty$.
  
  On the other hand, if $|{\rm Re}(1/({\lambda_n^q-1}))|\rightarrow +\infty$, then  \eqref{sum-i} implies that $|{\rm Re}(1/(1-\mu_n))|\rightarrow +\infty$. Hence  $g_n\rightarrow g_0$ horocyclically.  Note that in this setting,  $(g_n)_n$ is a $*$-sequence  of $g_0$. By Theorem \ref{h-equiv},  %we have 
  $J(f_{\lambda_n})\rightarrow J(f_{\lambda_0})$.
\end{proof}

\begin{rmk} \label{leaned} %Let $f_0$ have a parabolic fixed point $\zeta$ of multiplicity $\nu+1$, $f'_0(\zeta)=1$. Let $A_1, \cdots, A_\nu$ be the immediate parabolic basins of $f_0$ at $\zeta$.   
	Let $f_n\rightarrow f_0$ be  a leaned sequence of $f_0$ at $\zeta$. 
	Then %for any $k\in \mathbb Z_\nu$, 
		$$X\cap A_k\neq \emptyset,  \ \forall X\in  \mathcal{L}((J(f_{n}))_n), \ \forall k\in \mathbb Z_\nu.$$
		This implies in particular that $J(f_0)\subsetneq X, \ \forall X\in  \mathcal{L}((J(f_{n}))_n)$.
		\end{rmk}
	\begin{proof} We may assume $f_{n}\in \mathcal{WB}_\phi(\mathbf G)$ for some $\phi, \mathbf G$ and for all $n\geq 1$ (by Proposition \ref{sp}).  Let $V$ be the vertex set  induced by $\mathbf G$ (see \S \ref{h-e}).  Since $(f_n)_n$ is  leaned sequence of $f_0$ at $\zeta$,  by Lemma  \ref{asm-iota}, 
		for any $v\in V$,  
		\begin{equation} \label{lean-eq}
			\sup_{n}|{\rm Re} \ \iota(f_n, v(f_n))|<+\infty.
			\end{equation}    Hence for any $k\in \mathbb Z_\nu$, Oudkerk's algorithm for $k$ can   terminate at step (3) for $r=1$.  The conclusion follows from Proposition \ref{ak-limit1}.
		\end{proof}
	
	\begin{rmk}  In Remark \ref{leaned},  we further assume that $f_n, f_0$ are polynomials with connected Julia sets (this can happen, for example, when $f_n$ approaches $f_0$ along the bifurcation locus in $\mathcal P_d$). 
		
		 Then \eqref{lean-eq} implies that for any $f_n$-fixed point  $\zeta_n$ so that  $\zeta_n\rightarrow \zeta$,   the multiplier sequence  $(f_n'(\zeta_n))_n$  does not converge to $1$ horocyclically. By  Remark \ref{ext-generic}, 
		for any external ray $R_{f_0}(\theta)$ landing at $\zeta$,  
	$$\overline{R_{f_0}(\theta)}\subsetneq	 X, \ \forall X\in  \mathcal{L}((\overline{R_{f_{n}}(\theta)})_n).$$
	\end{rmk}

\end{document}